\begin{document}
\SetKwComment{Comment}{$\triangleright$\ }{}
\newcommand{\mpi}{$\mbox{{\sf MPI-LIBLINEAR}}$\xspace}
\def\webspam{{\sf webspam}\xspace}
\def\news{{\sf news20}\xspace}
\def\eps{{\sf epsilon}\xspace}
\def\uu{{\sf url}\xspace}
\def\rcvt{{\sf rcv1t}\xspace}
\def\kddb{{\sf KDD2010-b}\xspace}
\def\indicator{{ \mathbb{1}}}
\def\R{{ \mathbf{R}}}
\def\Z{{ \mathbf{Z}}}
\def\N{{ \mathbf{N}}}
\def\POS{{\sf POS}\xspace}
\def\dep{{\sf DEP}\xspace}
\def\ADMM{{\sf ADMM-Struct}\xspace}
\def\perceptron{{\sf Distributed Perceptron}\xspace}
\def\bx{{\boldsymbol x}}
\def\by{{\boldsymbol y}}
\def\bz{{\boldsymbol z}}
\def\bb{{\boldsymbol b}}
\def\bp{{\boldsymbol p}}
\def\w{{\boldsymbol w}}
\def\bw{{\boldsymbol w}}
\def\bu{{\boldsymbol u}}
\def\bv{{\boldsymbol v}}
\def\bs{{\boldsymbol s}}
\def\be{{\boldsymbol e}}
\def\bd{{\boldsymbol d}}
\def\b1{{\boldsymbol 1}}
\def\bzero{{\boldsymbol 0}}
\def\bxi{\boldsymbol \xi}
\def\bpsi{\boldsymbol \psi}
\def\AL{{\boldsymbol{\alpha}}}
\def\DAL{{\boldsymbol{\Delta\alpha}}}
\def\HAL{{\boldsymbol{\hat{\alpha}}}}
\def\BL{\boldsymbol \beta}
\def\byi{{\boldsymbol{y}_i}}
\def\Bxi{\boldsymbol \xi}
\def\xset{{\mathcal{X}}}
\def\yset{{\mathcal{Y}}}
\newcommand{\dsvm}{$\mbox{{\sf DSVM-AVE}}$\xspace}
\newtheorem{assumption}{Assumption}
\newcommand{\disdca}{$\mbox{{\sf DisDCA}}$\xspace}
\newcommand{\bqo}{$\mbox{{\sf BDA}}$\xspace}
\newcommand{\blockapprox}{$\mbox{{\sf BDA}}$\xspace}
\newcommand{\lcommdir}{$\mbox{{\sf L-CommDir}}$\xspace}
\def\tron{{\sf TRON}\xspace}
\def\qedhere{}

\titlerunning{Distributed Block-diagonal Approximation for Dual ERM}
\title{{Distributed Block-diagonal Approximation Methods for
Regularized Empirical Risk Minimization}\thanks{Version of \today.}}
\author{Ching-pei Lee \and Kai-Wei Chang}
\institute{Ching-pei Lee\at
	Department of Mathematics and Institute for Mathematical Sciences,
	National University of Singapore\\ 
	\email{leechingpei@gmail.com}
	\and
	Kai-Wei Chang \at
	Department of Computer Science ,University of California Los
	Angeles\\
	\email{kw@kwchang.net}
}
\date{received: date / accepted: date}
\newcommand{\citet}[1]{\cite{#1}}
\newcommand{\citep}[1]{\cite{#1}}
\journalname{Machine Learning}
\maketitle
\begin{abstract}
	In recent years, there is a growing need to train machine learning
	models on a huge volume of data.  Therefore, designing efficient
	distributed optimization algorithms for empirical risk
	minimization (ERM) has become an active and challenging
	research topic.
	In this paper, we propose a flexible framework for distributed ERM
	training through solving the dual problem, which provides a unified
	description and comparison of existing methods.
	Our approach requires only approximate solutions of the
	sub-problems involved in the optimization process, and is
	versatile to be applied on many large-scale machine learning
	problems including classification, regression, and structured
	prediction.
	We show that our framework enjoys global linear convergence for a
	broad class of non-strongly-convex problems, and some specific
	choices of the sub-problems can even achieve much faster
	convergence than existing approaches by a refined analysis.
	This improved convergence rate is also reflected in the superior
	empirical performance of our method.
\end{abstract}

\keywords{Distributed optimization, large-scale learning, empirical
risk minimization, dual method, inexact method}

\section{Introduction}
\label{sec:intro}
%Motivation
With the rapid growth of data volume and model complexity, designing
scalable learning algorithms has become increasingly important.
Distributed optimization techniques, which distribute the
computational burden across multiple machines, have shown early
success on this path.
This type of approaches are particularly useful when the optimization
problem involves massive computation or when the dataset is stored
across multiple computational nodes.
However, the communication cost and the asynchronous nature of
distributed computation challenge the design of efficient optimization
algorithms in the distributed environment.

%Define the problem
In this paper, we study distributed optimization algorithms for
training machine learning models that can be represented by the
regularized empirical risk minimization (ERM) framework.
Given a set of training data, $\{X_i\}_{i=1}^l, X_i \in \R^{n \times
c_i}, c_i\in \N$, where $l,n>0$ are the number of instances and the
dimension of the model respectively, regularized ERM models solve the
following optimization problem:
\begin{equation}
	\min_{\bw\in \R^n} \quad f^P(\bw) \coloneqq g(\bw) + \sum_{i = 1}^l
	\xi_i\left(X_i^T\bw\right).
	\label{eq:primal}
\end{equation}
In the literature, $g$ and $\xi_i$ are called the
regularization term and the loss term, respectively.
We assume that $f^P$ is a proper and closed convex function that can be
extended-valued and the solution set of \eqref{eq:primal} is nonempty.
Besides, we specifically focus on linear models, which have been shown
successful in dealing with large-scale data thank to their efficiency
and interpretability.\footnote{Linear models allow developers to
	interpret the value of each feature from the learned model
parameters.  }

The definition in problem  \eqref{eq:primal}
is general and covers a variety of learning problems, including binary
classification, multi-class classification, regression, and structured
prediction.
To unify different learning problems, we encode the true labels
(i.e., $y_i \in \mathcal{Y}_i$) in the loss term $\xi_i$ and the input
data $X_i$.
For some learning problems, the space of $X_i$ is spanned by a set of
variables whose size may vary for different $i$. Therefore, we
represent $X_i$ as an $n \times c_i$ matrix. For example, in the
part-of-speech tagging task, where each input sentence consists of a
sequence of words, $c_i$ represents the number of words in the $i$-th
sentence. We discuss in details the loss terms for
different learning problems in Section \ref{sec:applications}.
Regarding the regularization term, common choices include the
squared-$\ell_2$ norm, the $\ell_1$ norm, and the elastic net
that combines both \citep{HZ05a}.

% Introduce the dual problem
In many applications, it is preferable to solve the dual
problem whose optimization might be easier.
The dual problem of \eqref{eq:primal} is
\begin{equation}
	\min_{\AL \in \Omega}\quad f(\AL) \coloneqq g^*(X \AL) + \sum_{i=1}^l
	\xi_i^*(-\AL_i),
	\label{eq:dual}
\end{equation}
where
\begin{equation*}
	X \coloneqq [X_1,\ldots, X_l],\quad
	\AL \coloneqq \left[
		\begin{array}{c}
			\AL_1\\\vdots\\\AL_l
		\end{array}
	\right],
\end{equation*}
$\AL_i \in \R^{c_i}$ is the dual variable vector associated with $X_i$,
for any function $h(\cdot)$, $h^*(\cdot)$ is the convex conjugate
of $h(\cdot)$:
\begin{equation*}
h^*(\bw) \coloneqq \max_{\bz \in \text{dom}(h)} \quad \bz^T  \bw - h(\bz),
\quad \forall \bw,
\end{equation*}
and as $g^*$ is finite everywhere in our setting (see
Assumption~\ref{assum:strong} and the description that follows), the
domain $\Omega$ is
\begin{equation*}
\Omega \coloneqq \prod_{i=1}^l
-\text{dom}(\xi_i^*) \subseteq \R^{\sum_{i=1}^l c_i}.
\end{equation*}

The goal of solving the dual problem \eqref{eq:dual} is still getting
a solution to the original primal problem \eqref{eq:primal}.
It can be shown easily by Slater's condition that when $f^P$ is
convex, strong duality between \eqref{eq:primal} and \eqref{eq:dual}
holds,
which means that any pair of primal and dual optimal solutions
$(\bw^*, \AL^*)$ satisfies
\begin{equation*}
f^P\left(\bw^*\right) = - f\left(\AL^*\right).
\end{equation*}

% The challenges of designing a distributed learning algorithm
Despite that optimization methods for the dual ERM problem \eqref{eq:dual}
on a single machine have been widely studied (see, for example, the
survey paper \cite{GXY11c}), adapting them to a distributed
environment is not straightforward due to the following two reasons.
First, most existing
single-core algorithms for dual ERM are inherently sequential and
hence hard to parallelize.
Second, in a distributed environment, inter-machine communication is
usually the bottleneck for parallel optimizers and a careful design to
reduce the communication cost is essential.  For example, we may
prefer an algorithm with faster convergence even if it takes longer
computation time at each iteration as it induces fewer communication rounds and
consequentially reduces overall communication overhead.

% Introduce our approach

In this paper, we propose a distributed learning framework for solving
\eqref{eq:dual}.
At each iteration, it minimizes a sub-problem consisting of the sum of
a second-order approximation of $g^*(X\AL)$ and the original
$\bxi^*(-\AL)$.
We study how to choose the approximation to let the proposed
approach enjoy not only fast theoretical and empirical convergence
rate but low communication overhead.
After solving the sub-problem, we conduct a line search that
requires only negligible computational cost and $O(1)$ communication
to ensure sufficient function value decrease.
With this line search procedure, our algorithm achieves faster
empirical performance compared with existing approaches.

By utilizing relaxed conditions, even if the subproblem is solved only
approximately, our method is able to achieve global
linear convergence for many problems whose dual objective is
not strongly convex, including support vectors machines
(SVM)~\citet{BB92a,VV95a} and structured support vector machines
(SSVM)~\citep{IT05a,BT04a}.
In other words, our algorithm takes only $O(\log(1/\epsilon))$
iterations, or equivalently, rounds of communication, to obtain an
$\epsilon$-accurate solution for \eqref{eq:dual}.\footnote{
Given any optimization problem
$\min_{x \in X} \, f(x)$
whose minimum is attainable and denoted by $f^*$,
we call $x \in X$ an $\epsilon$-accurate solution for
this problem if
$f(x) - f^* \leq \epsilon$.
}

Our analysis then shows that this result implies that obtaining
an $\epsilon$-accurate solution for the original ERM problem
\eqref{eq:primal} also takes only $O(\log(1 / \epsilon))$ iterations.
We further show that when the choice of the sub-problem properly
extracts information from the Hessian of $g^*(X\AL)$, the convergence
can be significantly accelerated to reduce the required number of
iterations and therefore the running time.
Besides, our flexible framework generalizes existing approaches and
hence facilitates the discussion of the differences between the
proposed distributed learning algorithms and the existing ones for
\eqref{eq:dual}.

Recently, \citet{ZS17a} proposed an accelerated
method for solving the dual ERM problem in a distributed setting.
Their techniques derived from \cite{SSS13a} is similar to the
Catalyst framework for convex optimization \citep{HL15a}.
In essence, at every iteration, their approach adds a term $\kappa \|\bw -
\bz\|_2^2/2$ to \eqref{eq:primal} and approximately solves the dual
problem of the modified primal problem by an existing
distributed optimization algorithm for \eqref{eq:dual}. The solution
is then used to generate $\bz$ for the next iteration.
Like the Catalyst framework that can be combined with any convex
optimization algorithm,
the acceleration technique in \cite{ZS17a} can also be incorporated
with our distributed learning algorithm.
Specifically, we can apply our proposed algorithm to solve the
modified dual problem in the procedure mentioned above.
Therefore, to simplify the discussion, we focus on comparing methods
that solve the original optimization problem \eqref{eq:dual},
and acceleration approach discussed in \cite{ZS17a} and other
studies not designed for distributed learning (e.g.,
\cite{HL15a,SSS13a}) are not in the scope of this study.

Different from approaches that consider the theoretical communication
efficiency only,
our goal is to design a practical distributed training
algorithm for regularized ERM with strong empirical performance in
terms of the overall running time.
Therefore, we propose an algorithm that is both computation and
communication efficient by designing a second-order method, in which
the approximated Hessian can be computed without lengthy rounds of
communication.
We show that this approach is also extremely communication-efficient
in theory by taking the approximated Hessian as a preconditioner
that can significantly improve the condition number of the problem.

Special cases of the proposed framework were published earlier as
conference and workshop papers \citep{CPL15a,CPL15b}.
In the journal version, we unify the results and extend the previous
work to a general setting that covers a much broader class of
problems, and provide thorough theoretical and empirical analyses.
We show that either when the sub-problem is solved exactly or
approximately at every iteration, our approach enjoys fast linear
convergence, and the convergence rate behaves benignly with respect to
the inexactness.
We also provide a novel analysis showing why the selected sub-problem can
greatly improve the convergence speed when compared to existing general analyses.

\paragraph{Contributions}
We propose a general framework for optimizing the dual ERM problem
\eqref{eq:dual} when the data are stored on multiple machines. Our
contributions are summarized in the following.
\begin{enumerate}
\item Our framework is flexible, allowing different choices of
	sub-problem formulations, sub-problem solvers, and line search
	approaches. Furthermore, approximate sub-problem solutions can be
	used.  As a result, many existing methods can be viewed as special
	cases of our framework.
\item We provide detailed theoretical analysis, showing that our
	framework converges linearly on a class of problems broader than
	the strongly convex ones, even when the sub-problem is solved only
	approximately.  Our analysis not only shows fast convergence of the
	proposed algorithm, it also provides sharper convergence
	guarantees for existing methods that can fit into our framework.
\item We further give an analysis through change of norm to show that
	under specific sub-problem choices, our algorithm can achieve much
	faster convergence.
	Our analysis gets around the dependency on the condition number
	defined by the Euclidean norm and can therefore obtain faster
	rates than existing approaches.
\item The proposed approach is also empirically communication- and
	computation-efficient. Our empirical study shows that it
	outperforms existing methods on real-world large-scale datasets.
\end{enumerate}

\paragraph{Notations}
We use the following notations.
\begin{equation*}
	\bxi(X^T\bw) \coloneqq \sum_{i=1}^l \xi_i(X_i^T\bw), \quad
	\bxi^*(-\AL) \coloneqq \sum_{i=1}^l \xi^*_i(-\AL_i), \quad
	G^*(\AL) \coloneqq g^*(X\AL).
\end{equation*}
For any positive integer $m$, any vector $\bv \in \R^m$, and any
	$I \subseteq \{1,\dots,m\}$, $\bv_I$ denotes the sub-vector in
	$\R^{|I|}$ that contains the coordinates of $\bv$ indexed by $I$.
	We use  $\|\cdot\|$ to denote the Euclidean norm, and
	when given a symmetric positive semidefinite matrix $A$, we denote the
	seminorm induced by it as
	\begin{equation*}
		\|x\|_A \coloneqq \sqrt{x^T A x}.
	\end{equation*}

\paragraph{Assumptions}
We consider the following setting in this paper.
First, we assume the training instances are
	distributed across $K$ machines, where the instances on machine
	$k$ are $\{X_i \}_{i \in J_k}$.  In our setting, $J_k$ are
	disjoint index sets such that
\begin{equation*}
	\bigcup_{k = 1}^K J_k = \{1,\ldots,l\},\quad
	J_i \cap J_k = \phi, \quad \forall i\neq k.
\end{equation*}
Without loss of generality, we assume that there is a sequence of
non-decreasing non-negative integers
\begin{equation*}
	0 = j_0 \leq j_1\leq\dotsc \leq j_K = l
\end{equation*}
such that
\begin{equation*}
	J_k = \left\{j_{k-1}+1,\dotsc,j_k\right\},\quad
	k=1,\dotsc, K.
\end{equation*}
We do not make any further assumption on how those instances are distributed
across machines. That is, the data distributions on different machines
can be different.
Second, the problem is assumed to have the following properties.
\begin{assumption}
\label{assum:strong}
The loss function $\xi_i$ is convex for all $i$ and
there exists $\sigma > 0$ such that the regularizer $g$ in
the primal problem \eqref{eq:primal} is $\sigma$-strongly convex.
Namely,
\begin{align}
\nonumber
&g(\alpha \bw_1 + (1 - \alpha) \bw_2) \leq \alpha g(\bw_1) + (1 -
\alpha) g(\bw_2) - \frac{\sigma\alpha(1 -\alpha)}{2} \|\bw_1 -
\bw_2\|^2, \\ 
&\forall \bw_1, \bw_2 \in \R^n,\quad \forall \alpha \in [0,1].
\label{eq:strongcvx}
\end{align}
Moreover, the convex function $f^P(\bw)$ is proper and closed, and
\eqref{eq:primal} has a non-empty solution set.
\end{assumption}
Since $g$ is $\sigma$-strongly convex, $g^*$ is differentiable and has
$(1/\sigma)$-Lipschitz continuous gradient~\cite[Part E, Theorem
4.2.2]{HU01a}.
Therefore,  $G^*$ has $(\|X^T X\|/\sigma)$-Lipschitz continuous
gradient.
This also indicates that even if $g$ is extended-valued, $g^*$ is
still finite everywhere, hence the only constraint on the feasible
region is from the domain of $\bxi^*$, namely $\AL \in \Omega$.

\paragraph{Organization}
The paper is organized as follows.
We give an overview of the proposed framework in Section \ref{sec:method}.
Implementation details and convergence analysis are respectively
discussed in Sections~\ref{sec:implement} and \ref{sec:analysis}.
We summarize related studies for distributed ERM optimization in
Section \ref{sec:related} and discuss applications of the proposed
approach in Section \ref{sec:applications}.
We then demonstrate the empirical performance of the proposed
algorithms in Section \ref{sec:exp} and discuss possible extensions and
limitations of this work in Section \ref{sec:discuss}.
The conclusions and final remarks are in Section \ref{sec:conclusion}.

The code for reproducing the experimental results in this paper is
available at \url{http://github.com/leepei/blockERM}.

\section{A Block-diagonal Approximation Framework}
\label{sec:method}
As $g^*$ is differentiable from Assumption~\ref{assum:strong},
the KKT optimality conditions imply that for any pair of
primal and dual optimal solutions $(\bw^*, \AL^*)$,
\begin{equation}
\bw^* = \nabla g^*(X\AL^*).
\label{eq:kkt}
\end{equation}
Although \eqref{eq:kkt} holds only at the optima,
we take the same formulation to define $\bw(\AL)$ as the primal
iterate associated with any $\AL$ for the dual problem
\eqref{eq:dual}:
\begin{equation}
	\label{eq:w}
\bw(\AL) \coloneqq \nabla g^*(X\AL).
\end{equation}

Our framework is an iterative descent method for solving
\eqref{eq:dual}.
Starting with an arbitrary feasible $\AL^0$, it generates a sequence
of feasible iterates $\{\AL^1,\AL^2,\dotsc\} \subset \Omega$
with the property that $f(\AL^i) \leq f(\AL^j)$ if $i > j$.
Each iterate is updated by a direction $\DAL^t$ and a step
size $\eta_t \geq 0$.
\begin{equation}
	\AL^{t+1} = \AL^t + \eta_t \DAL^t, \quad \forall t \geq 0.
	\label{eq:update}
\end{equation}
The term $\bxi^*$ in \eqref{eq:dual} is separable in $\AL$ and hence
can be optimized directly in a coordinate-wise manner.
However, the term $G^*$ is often complex and difficult to optimize.
Therefore, we approximate it using a quadratic surrogate based on the
fact that $G^*$ is Lipschitz-continuously differentiable.

Putting them together, given the current iterate $\AL^t$, we solve
\begin{equation}
    \begin{split}
    \DAL^t &\approx \arg\min_{\DAL} \, Q_{B_t}^{\AL^t}(\DAL), \\
	Q_{B_t}^{\AL^t}\left( \DAL \right)& \coloneqq
	\nabla G^*(\AL^t)^T
	\DAL + \frac12 (\DAL)^T B_t \DAL + \bxi^*(-\AL^t - \DAL)
	\label{eq:quadratic}
    \end{split}
\end{equation}
to obtain the update direction $\DAL^t$,
where $B_t$ for each $t$ is some symmetric matrix selected to approximate
$\nabla^2 G^*(\AL^t)$ (note that since $\nabla G^*$ is Lipschitz
continuous, $G^*$ is twice-differentiable almost everywhere so we at
least have the generalized Hessian),
and there is a wide range of choices for it, depending on the
scenario.
Note that it is usually hard to solve \eqref{eq:quadratic} to
optimality unless $B_t$ is diagonal. Therefore, we consider only
attaining approximate solutions for \eqref{eq:quadratic}.
We will discuss the selection of $B_t$ in Section \ref{sec:implement}.
The general analysis in Section \ref{sec:analysis} shows that as long as
the objective of \eqref{eq:quadratic} is strongly convex enough (in
the sense that the strong convexity parameter is large enough), even
if $B_t$ is indefinite and \eqref{eq:quadratic} is solved only
roughly, $\DAL^t$ will be a descent direction.
On the other hand, when $B_t$ approximates $\nabla^2 G^*(\AL^t)$ well
as in our choice,
the analysis in Section~\ref{subsec:improved} shows that the
convergence speed can be highly improved, making the algorithm more
communication-efficient.

Regarding the step size $\eta_t$, we investigate two line search strategies.
The first is the exact line search strategy, in which we minimize the
objective function along the update direction:
\begin{equation}
	\eta_t = \arg\min_{\eta\in \R}\quad f(\AL^t + \eta \DAL^t).
    \label{eq:exact_linesearch}
\end{equation}
However, this approach is not practical unless
\eqref{eq:exact_linesearch} can be solved easily.
Therefore, in general, we apply a backtracking line search strategy
using a modified Armijo rule suggested by \citet{PT07a}.
Given $\beta,\tau \in (0,1)$,
our procedure finds the smallest integer $i \geq 0$ such that
$\eta = \beta^i$ satisfies
\begin{equation}
	\label{eq:armijo}
	f(\AL^t + \eta \DAL^t) \leq f(\AL^t) + \eta \tau \Delta_t,
\end{equation}
where
\begin{equation}
	\Delta_t \coloneqq \nabla G^*(\AL^t)^T \DAL^t + \bxi^*(-\AL^t -
	\DAL^t) - \bxi^*(-\AL^t),
\label{eq:delta}
\end{equation}
and takes $\eta_t = \eta$.
Notice that as we are approximating the Hessian, similar to Newton and
quasi-Newton methods, our backtracking always starts from trying the
unit step size $\eta = 1$.

\section{Distributed Implementation for Dual ERM}
\label{sec:implement}
In this section, we provide technical details on how to apply the algorithm framework discussed in Section
\ref{sec:method} in a distributed environment.
In particular, we will discuss the choice of $B_t$ in
\eqref{eq:quadratic} such that the communication overhead can be reduced.
We will also propose a trick to make line search efficient.

For the ease of algorithm description, we denote the $i$-th column of $X$
by $\bx_i$, and the corresponding element of $\AL$ by $\alpha_i$.
We also denote the number of columns of $X$, which is equivalent to
the dimension of $\AL$, by
\begin{equation*}
	N \coloneqq \sum_{i=1}^l c_i.
\end{equation*}
The index sets corresponding to the columns of the instances in
$J_{k}$ are denoted by $\tilde{J}_k\subseteq \{1,\ldots,N\},
k=1,\ldots,K$.
We define
\begin{equation}
	\pi(i) = k, \quad\text{ if }i \in \tilde{J}_k.
\label{eq:pi}
\end{equation}

\subsection{Update Direction}
\label{subsec:direction}
In the following, we discuss how to select $B_t$ such that the
objective of \eqref{eq:quadratic} is 1) strongly convex, 
2) easy to optimize with low communication cost, and 3) a good approximation of \eqref{eq:dual}.

In our assumption, the $k$-th machine stores and handles only $X_i$ and the
corresponding $\AL_i$ for $i \in J_k$.
In order to reduce the communication cost, we need to pick
$B_t$ in a way such that \eqref{eq:quadratic} can be decomposed into
independent sub-problems, of which each involves only data points stored on
the same machine.
In such a way, each sub-problem can be solved locally on one node without
any inter-machine communication.
Motivated by this, $B_t$ should be block-diagonal (up to
permutations of the instance indices) such that
\begin{equation}
	\left( B_t \right)_{i,j} = 0,\text{ if }\pi(i) \neq \pi(j),
	\label{eq:blockdiagonal}
\end{equation}
where $\pi$ is defined in \eqref{eq:pi}.

The ideal choice for $B_t$ is to set it
to be the Hessian matrix $H_{\AL^t}$ of $G^*(\AL^t)$:
\begin{equation*}
	H_{\AL^t} \coloneqq \nabla^2 G^*(\AL^t)
	= X^T \nabla^2 g^*\left(X\AL^t\right) X.
\end{equation*}
This choice leads to the proximal Newton methods that enjoys rapid
convergence in both theory and practice.
However, the Hessian matrix is usually dense and does not satisfy the
condition \eqref{eq:blockdiagonal}, incurring significant communication
cost in the distributed scenario we consider here.

Therefore, we consider a block-diagonal approximation
$\tilde{H}_{\AL^t}$ instead.
\begin{equation}
	\left( \tilde{H}_{\AL^t} \right)_{i,j} = \begin{cases}
		\left( H_{\AL^t}\right)_{i,j} & \text{ if }
	\pi(i) = \pi(j),\\
	0 & \text{ otherwise.  }
	\end{cases}
	\label{eq:blockHg}
\end{equation}
Note that since
\begin{equation*}
	\nabla^2_{i,j} G^*\left(X\AL^t\right) = \bx_i^T \nabla^2
g^*\left(X\AL^t\right) \bx_j,
\end{equation*}
if each machine maintains the whole vector of $X\AL^t$,
entries of \eqref{eq:blockHg} can be decomposed into parts such that
each one is constructed using only data points stored on one machine.
Thus, the sub-problems can be solved separately on different machines
without communication.
The Hessian matrix may be only positive semi-definite.
In this case, when $\bxi^*(-\AL)$ is not strongly convex, neither is
problem \eqref{eq:quadratic}, and the sub-problem can therefore be
ill-conditioned.
To remedy this issue, we add a damping term to $B_t$ to ensure strong
convexity of problem \eqref{eq:quadratic} when needed.

To summarize, our choice for $B_t$ in distributed environments can be
represented by the following formulation.
\begin{equation}
	B_t = a_1^t \tilde{H}_{\AL_t} + a_2^t I,\quad \text{ for
	some } a_1^t, a_2^t \geq 0.
	\label{eq:Ht}
\end{equation}
The values of $a_1^t$ and $a_2^t$ depend on the problem structure and
the applications.
In most cases, we set $a_2^t = 0$, especially when it is
known that either $\bxi^*(-\AL)$ is strongly convex, or
$\tilde{H}_{\AL_t}$ is positive definite.
For $a_1^t$, practical results
\citep{DP11a,TY13b} suggest that $a_1^t \in [1,K]$ leads to
good empirical performance,
while we prefer $a_1^t \equiv 1$ as it is a closer approximation to
the Hessian.

In solving \eqref{eq:quadratic} with our choice \eqref{eq:Ht} and
$a_1^t \neq 0$,
each machine needs the information of
$X\AL^t$ to calculate both $(B_t)_{\tilde J_k,\tilde J_k}$ and
\begin{equation}
	\nabla_{\tilde J_k} G^*\left( \AL^t \right) = X_{:,\tilde J_k}^T
\nabla g^*\left( X\AL^t \right).
\label{eq:gradG}
\end{equation}
Therefore, after updating $\AL^t$, we need to synchronize the
information
\begin{equation*}
	\bv^t \coloneqq X\AL^t = \sum_{k=1}^K \sum_{j \in J_k} X_j
	\AL^t_j
\end{equation*}
through one round of inter-machine communication.
Synchronizing this $n$-dimensional vector across machines is more effective than
transmitting either the Hessian or the whole $X$ together with
$\AL^t$.
However, we also need the update direction for line search; therefore,
instead of $\bv^{t+1}$, we synchronize
\begin{equation}
	\Delta \bv^t \coloneqq X \DAL^t = \sum_{k=1}^K \sum_{j \in J_k}
	X_j \Delta \AL^t_j
	\label{eq:deltaw}
\end{equation}
over machines  and then update $\bv^{t+1}$ locally on all machines by
\begin{equation*}
	\bv^{t+1} =\bv^t + \eta_t \Delta \bv^t
\end{equation*}
after the step size $\eta_t$ is determined.
Details of the communication overhead will be discussed in
Sections~\ref{subsec:cost} and \ref{sec:analysis}.

\subsection{Line Search}
\label{subsec:linesearch}
After the update direction $\DAL^t$ is decided by solving
\eqref{eq:quadratic} (approximately),
we need to conduct line search to find a step size satisfying
condition \eqref{eq:armijo} to ensure sufficient function value
decrease.
On the right-hand side of \eqref{eq:armijo}, the first term is
available from the previous iteration; therefore, we only need to
evaluate \eqref{eq:delta}.
From \eqref{eq:gradG}, this can be calculated by
\begin{equation}
	\Delta_t
= \nabla g^* \left( \bv^t \right)^T\Delta\bv^t +
\left(\bxi^* \left(- \AL^t - \DAL^t \right) - \bxi^*\left(-\AL^t
\right) \right).
\label{eq:deltasum}
\end{equation}
We require only $O(1)$ communication overhead to evaluate the
$\bxi^*$ functions in Eq. \eqref{eq:deltasum},
and no additional computation is needed because the information of
$\bxi^*(-\AL^t - \DAL^t)$ is maintained when solving \eqref{eq:quadratic}.
Furthermore, because each machine has
full information of $\Delta \bv^t$, $\bv^t$, and hence $g^*(\bv^t)$,
the first term in \eqref{eq:deltasum} can be calculated  in a
distributed manner as well to reduce the computational cost per machine.
Thus, we can combine the local partial sums of all terms in
\eqref{eq:deltasum} as a scalar value and synchronize it across machines.
One can also see from this calculation that synchronizing $\Delta
\bv^t$ is inevitable for computing the required values efficiently.

For the left-hand side of \eqref{eq:armijo}, the calculation of the
$\xi_i^*$ terms is distributed by nature as discussed above.
If $g^*(\bv)$ is separable, its computation can also
be parallelized.
Furthermore, in some special cases,  we are able to evaluate
$g^*(\bv + \eta \Delta \bv)$ using a closed-form formulation cheaply.
For example, when
\begin{equation*}
	g^*(\bv) = \frac{1}{2}\left\|\bv\right\|^2,
\end{equation*}
we have
\begin{equation}
	g^*\left(\bv + \eta \Delta \bv\right) = \frac{1}{2}\left(
	\left\|\bv\right\|^2 + \eta^2 \left\| \Delta\bv\right\|^2 + 2
	\eta \bv^T \Delta\bv \right).
	\label{eq:g}
\end{equation}
In this case, we can precompute $\|\Delta \bv\|^2$ and $\bv^T
\Delta\bv$, then the calculation of \eqref{eq:g} with
different $\eta$ requires only $O(1)$ computation without any
communication.
For the general case, though the computation might not be this low,
by maintaining both $\bv$ and $\Delta \bv$,
the calculation of $g(\bv + \eta \Delta \bv)$ requires no additional
communication and at most $O(n)$ computation locally, and this
cost is negligible as other parts of the algorithm incur
more expensive computation.
The line search procedure is summarized in Algorithm
\ref{alg:linesearch}.

The exact line search strategy is possible only when
\begin{equation*}
\frac{\partial
f(\AL+ \eta \DAL)}{ \partial \eta} = 0
\end{equation*}
has an analytic solution.
For example, when $f$ is quadratic,
we can compute
\begin{equation}
\frac{\partial
f(\AL+ \eta \DAL)}{ \partial \eta} = 0 \quad \Rightarrow \quad
\eta = \frac{-\nabla f(\AL)^T \DAL}{\DAL^T \nabla^2 f(\AL) \DAL},
\label{eq:quadlinesearch}
\end{equation}
and then project $\eta$ back to the interval
$\{\eta \mid \AL+\eta \DAL \in \Omega\}$.

\begin{algorithm}[t]
	\DontPrintSemicolon
	\caption{Distributed backtracking line search}
	\label{alg:linesearch}
\KwIn{
	$\AL, \DAL \in \R^N$,
	$\beta, \tau \in (0,1)$,
	$f(\AL) \in \R$,
	$\bv = X \AL, \Delta\bv = X \Delta \AL$}
	Form a partition $\{\hat{J}_k\}_{k=1}^K$ of $\{1,\dotsc,n\}$\;
	Calculate $\Delta_t$ in parallel: \Comment*[f]{$O(1)$
	communication}
\begin{equation*}
	\Delta_t = \sum_{k=1}^K\left( \nabla_{\hat{J}_k} g^* \left( \bv^t
	\right)^T
	\Delta\bv^t_{\hat{J}_k} + \sum_{j \in J_k}
	\xi_j^*\left(-\AL_j + \DAL_j\right) -
	\xi_j^*\left(-\AL_j\right)\right).
\end{equation*}\;
	$\eta \leftarrow 1$\;
Calculate $f(\AL + \eta \DAL)$ using $\bv$ and $\eta \Delta \bv$
\Comment*[r]{$O(1)$ communication}
	\While{$
	f(\AL+\eta \DAL) > f(\AL)  + \eta \tau \Delta_t$}
	{
		$\eta \leftarrow \eta\beta$\;
		Calculate $f(\AL + \eta \DAL)$ using $\bv$ and $\eta \Delta
		\bv$ \Comment*[r]{$O(1)$ communication}
	}
	\KwOut{$\eta$, $f(\AL + \eta \DAL)$}
\end{algorithm}

\subsection{Sub-Problem Solver on Each Machine}
If $B_t$ satisfies \eqref{eq:blockdiagonal},
\eqref{eq:quadratic} can be decomposed into $K$ independent
sub-problems:
\begin{equation}
	\min_{\DAL_{J_k}}\, \nabla_{J_k} G^*(\AL^t)^T
	\DAL_{J_k} + \frac12 \DAL_{J_k}^T (B_t)_{J_k,
	J_k} \DAL_{J_k} +
	\sum_{i \in J_k} \xi^*_i(-\AL^t_i - \DAL_i).
	\label{eq:subprob}
\end{equation}
Since all the information needed for solving \eqref{eq:subprob} is
available on machine $k$, the sub-problems can be solved without any
inter-machine communication.

Our framework does not pose any limitation on the solver for \eqref{eq:quadratic}.
For example, \eqref{eq:quadratic} can be solved by (block) coordinate
descent, (accelerated) proximal methods, just to name a few.
In our experiment, we use a random-permutation cyclic coordinate
descent method for the dual ERM problem~\citep{CJH08a,HFY10a,ChangYi13} as our
local solver. This method has been proven to be efficient in the
single-core setting empirically; theoretically, it is guaranteed to converge
globally linearly \cite{PWW13a} and can outperform other variants of
coordinate descent on some cases \citep{CPL18c,SJW17a}.
Other options can be adopted for specific problems or
datasets under discretion, but such discussion is beyond the scope of
this work.

\subsection{Output the Best Primal Solution}
\label{subsec:practical}
The proposed algorithm is a descent method for the dual problem
\eqref{eq:dual}.
In other words, it guarantees that $f(\AL^{t_1}) < f(\AL^{t_2})$ for
$t_1 > t_2$.
However, there is no guarantee that the corresponding primal solution
$\bw$ calculated by \eqref{eq:w} decreases monotonically as
well.\footnote{We will show in Section \ref{sec:analysis} that the
primal objective converges R-linearly, but there is no guarantee on
monotonic decrease.}
This is a common issue for all dual methods. To deal with it, we keep
track of the primal objectives of all iterates, and when the algorithm
is terminated, we report the model with the lowest primal objective.
This is known as the pocket approach in the literature of
Perceptron~\citep{SG90a}.

\subsection{Stopping Condition}
It is impractical to solve problem \eqref{eq:dual} exactly, as a model
that is reasonably close to the optimum can achieve similar or
even identical accuracy performance compared to the optimum.
In practice, one can design the stopping condition for the training
process by using the norm of the update direction, the size of
$\Delta_t$, or the decrement of the objective function value.
We consider the following practical stopping criterion:
\begin{equation*}
	f(\AL^t) + f^P(\bw(\AL^t)) \leq \epsilon \left( f \left( \AL^0 \right) +
	f^P \left( \bw \left( \AL^0 \right) \right) \right),
\end{equation*}
where $\epsilon \geq 0$ is a user-specified parameter.
This stopping condition directly reflects the model quality and is
easy to verify as the primal and dual objectives are computed at every iteration.

The overall distributed procedure for optimizing \eqref{eq:dual}
discussed in this section is described in Algorithm \ref{alg:blockcd}.

\begin{algorithm}[t]
	\DontPrintSemicolon
	\caption{Distributed block-diagonal approximation method for the
		dual ERM problem \eqref{eq:dual}.}
	\label{alg:blockcd}
	\KwIn{A feasible $\AL^0$ for
		\eqref{eq:dual}, $\epsilon \geq 0$}
		$\bar{f} \leftarrow \infty,$
		$\bar{w} \leftarrow \bzero$\;
		Compute $\bv^0 = \sum_{k=1}^K \sum_{j \in J_k} X_j
		\AL^0_j$  and $\bxi^*(-\AL^0)$
	\Comment*[r]{$O(n)$ communication}
	Compute $f(\AL^0)$ by $\bv^0$ and $\bxi^*(-\AL^0)$\;
	\For{$t = 0,1,2,\ldots$}{
		Compute $f^P(\bw(\AL^t))$ by \eqref{eq:w}
	\Comment*[r]{$O(1)$ communication}
		\If{$f^P(\bw(\AL^t)) < \bar{f}$}
		{$\bar{f} \leftarrow f^P(\bw(\AL^t))$, $\bar{w} \leftarrow
	\bw(\AL^t)$}
	\If{$f(\AL^t) + f^P(\bw(\AL^t)) \leq \epsilon (f(\AL^0) +
	f^P(\bw(\AL^0)))$}{Output $\bar{w}$ and terminate}
		Decide $a_1^t, a_2^t \geq 0$ but not both $0$\;
		Each machine obtains $\DAL_{J_k}^t$ by approximately solving
		\eqref{eq:subprob} independently and in parallel using the
		local data, with $B$ decided by \eqref{eq:Ht}\;
		Communicate $\Delta \bv^t = \sum_{k=1}^K \left( \sum_{j\in
			J_k} X_j \Delta \AL^t_j\right)$ \Comment*[r]{$O(n)$ communication}
		\begin{itemize}
			\item Variant I:
		Conduct line search through Algorithm \ref{alg:linesearch} to
		obtain $\eta_t$\;
		\item Variant II:
			$\eta_t \leftarrow \arg\min_\eta f(\AL^t + \eta \DAL^t)$\;
	\end{itemize}
	Each machine conducts in parallel:
	$\AL_{J_k}^{t+1} \leftarrow \AL_{J_k}^t + \eta_t \DAL^t_{J_k}$,
	$\bv^{t+1} \leftarrow \bv^t + \eta_t \Delta \bv^t$\;
	}
\end{algorithm}

\subsection{Cost per Iteration}
\label{subsec:cost}
In the following, we analyze the time complexity of each component in
the optimization process and summarize the cost per iteration of the
proposed algorithm.
For the ease of analysis, we assume that the number of columns of $X$
on each machine is $O(N / K)$, and the corresponding non-zero entries
on each machine is $O(\#\text{nnz} / K)$, where $\#\text{nnz}$ is the
number of non-zero elements in $X$.
We consider general $\xi^*$ and $g^*$ and
assume that the evaluations for $g(\bw)$, $g^*(\bv)$, and $\nabla
g^*(\bv)$ all cost $O(n)$.\footnote{We do not consider special cases
such as $g(\bw) = \|\bw\|^2/2$. In those cases, further acceleration
can be derived depending on the specific function structure.}
The part of $\nabla^2 g^*(\bv)$ is assumed to cost at most $O(n)$
(both for forming it and for its product with another vector), for
otherwise we can simply replace it with a diagonal matrix as an approximation.
In practice, performing exact line search is impractical unless the
problem structure allows.
Therefore, in the following, we only analyze the backtracking line
search strategy.
We also assume without loss of generality that the cost for evaluating
one $\xi^*_i$ is proportional to the dimension of the domain, namely
$O(c_i)$, so the evaluation of $\bxi^*$ costs $O(N/K)$ on each
machine.

We first check the cost for forming the problem \eqref{eq:quadratic}.
Note that we do not explicitly compute the values of $B_t$ and $\nabla
G^*(\AL^t)$.
Instead,
we compute only $\nabla G(\AL^t)^T \DAL^t$, through $\nabla
g^*(\bv^t)^T \Delta \bv^t$,
and the part $(\DAL)^T B_t \DAL$ under the choice \eqref{eq:Ht}
is obtained through $\|\DAL\|^2$ and $(\Delta \bv^t)^T \nabla^2
g^*(\bv^t)\Delta \bv^t$.
Therefore, for the linear term,
we need to compute only $\nabla g^*(\bv^t)$,
which costs $O(n)$ under our assumption given that $\bv^t$ is already
available on all the machines.
For the quadratic term,
it takes the same effort of $O(n)$ to get $\nabla^2 g^*(\bv^t)$.
Thus, forming the problem \eqref{eq:quadratic} costs $O(n)$ in
computation and no communication is involved.
Note that when considering the local sub-problems \eqref{eq:subprob},
the cost remains $O(n)$ as we just replace $\Delta \bv$ with the local
part $X_{:,J_k} \AL_{J_k}$, which is still a vector of dimension $n$.

Next, the cost of solving \eqref{eq:subprob} approximately by
passing through the data for a constant number of iterations $T$ is
$O(T \#\text{nnz} / K)$, as noted in most state-of-the-art single-core
optimization methods for the dual ERM (e.g., \cite{CJH08a,HFY10a}).
This part involves no communication between machines as well.

For the line search, as discussed in Section
\ref{subsec:linesearch},
we first need to make $\Delta \bv^{t}$ available on all machines.
The computational complexity for calculating $\Delta\bv^t$ through \eqref{eq:deltaw} is
$O(\#\text{nnz} / K)$,
and since the vector is of length $n$, it takes $O(n)$
communication cost to gather information from all machines.
After $\Delta \bv^t$ is available on all machines
and $\nabla g^*(\bv^t)$ is obtained,
we can calculate the first term of \eqref{eq:deltasum}.
This step costs $O(n/K)$ and $O(1)$ in computation and communication,
respectively.
The term related to $\bxi^*$ is a sum over $N$ individual functions
and therefore costs $O(N/K)$.
Thereafter, summing them up requires a $O(1)$ communication that can be
combined with the communication for obtaining $\nabla g^*(\bv^t)^T \Delta
\bv^t$.
Given $\bv^t$ and $\Delta \bv^t$, for each evaluation of $f$ under
different $\eta$,
it takes $O(n)$ to compute $\bv^t + \eta \Delta \bv^t$ and evaluate
the corresponding $g^*$.
For the part of $\bxi^*$, it costs $O(N/K)$ and $O(1)$ in computation and communication as it is
a sum over $N$ individual functions.
In total, each backtracking line search iteration costs $O(n + N/K)$
computation and $O(1)$ communication.

Finally, from \eqref{eq:w}, the vector $\bw(\AL^t)$ is the
same as the gradient vector we need in \eqref{eq:quadratic}, so there
is no additional cost to obtain the primal iterate,
and evaluating the primal objective costs $O(n)$ for $g(\bw(\AL^t))$
and $O(\#\text{nnz}/K)$ for $X^T \bw(\AL^t)$ in computation.
Thus the cost of the primal objective computation is $O(\#\text{nnz}/K
+ n)$.
It also takes $O(1)$ communication to gather the summation of
$\xi_i$ over the machines.

By assuming that each row and each column of $X$ has at least one
non-zero entry (for otherwise we can simply remove that row or
column),
we have $n + N = O(\#\text{nnz})$.
In summary, each iteration of Algorithm \ref{alg:blockcd} costs
\begin{equation*}
	O\left(\frac{\#\text{nnz}}{K} + n + \left(\frac{N}{K} +
	n\right) \times \#\text{(line search)}\right)
\end{equation*}
in computation and
\begin{equation*}
	O\left(n + \#\text{(line search)}\right)
\end{equation*}
in communication.
Later, we will show in Section \ref{sec:analysis} that the number of line search iterations is
upper-bounded by a constant. Therefore, 
the overall cost per iteration is $O(\#\text{nnz} /K + n)$
in computation and $O(n)$ in communication.

\section{Analysis}
\label{sec:analysis}
Our analysis consists of four parts.
The first three parts consider a general scenario and provide worst-case convergence guarantees of the proposed algorithm.
The last part considers a special choice of  $B_t$ (see \eqref{eq:blockHg}), which leads to a better convergence rate
when the learning objective satisfies certain conditions.
Specifically, we first consider the situation when the sub-problem \eqref{eq:quadratic} is solved to optimality every time.
We demonstrate that when the objective function satisfies the
Kurdyka-{\L}ojasiewicz inequality \citep{SL63a,SL93a,KK98a},\footnote{
	The Kurdyka-{\L}ojasiewicz inequality is much weaker than strong
convexity.}
the proposed algorithm converge globally linearly.
Next, we show that even if the sub-problem is solved only
approximately, global linear convergence is still retained under the
same condition.
Then, based on the relation \eqref{eq:w}, we show that as long as the
dual objective converges globally linearly, so does the primal
objective.
Finally, we investigate the choice of $B_t$ and provide an explanation on
why the special choice of \eqref{eq:Ht} (in particular with $a_1^t >
0$ and $a_2^t$ small) improves the convergence rate and therefore the
communication complexity.

Throughout this section, we assume that either of the following holds.
\begin{assumption}
	\label{assum:LipGrad}
The function $\bxi$ is differentiable and its gradient is
$\rho$-Lipschitz continuous for some $\rho > 0$.
That is,
\begin{equation*}
\|\nabla \bxi(\bz_1) - \nabla \bxi(\bz_2)\| \leq \rho \| \bz_1 -
\bz_2\|, \quad \forall \bz_1, \bz_2.
\end{equation*}
\end{assumption}

\begin{assumption}
	\label{assum:Lipsloss}
The function $\bxi$ is $L$-Lipschitz continuous for some $L>0$.
\begin{equation*}
|\bxi(\bz_1) - \bxi(\bz_2)| \leq L \|\bz_1 - \bz_2\|,
\quad \forall \bz_1, \bz_2.
\end{equation*}
\end{assumption}

These assumptions on $\bxi$ are less strict than
requiring each sub-component $\xi_i$ to satisfy certain properties.

\subsection{Convergence Analysis when Sub-Problems are Solved Exactly}
We assume the following condition based on the
Kurdyka-{\L}ojasiewicz (KL) inequality \citep{SL63a,SL93a,KK98a} holds.
\begin{assumption}
\label{assum:dualstrong}
The objective function in the dual problem \eqref{eq:dual}
satisfies the Kurdyka-{\L}ojasiewicz inequality with exponent
$1/2$ for some $\mu > 0$.  That is,
\begin{equation}
	f(\AL) - f^* \leq
	\frac{\min_{\hat{\bs} \in \partial f(\AL)}\|\hat{\bs}\|^2 }{2\mu}
	= \frac{\min_{\bs \in \partial \bxi^*(-\AL)}\|\nabla G(\AL) +
\bs\|^2}{2 \mu},
	\forall \AL \in \Omega.
	\label{eq:strong}
\end{equation}
where $f^*$ is the optimal objective value of the dual problem
\eqref{eq:dual},
and $\partial \bxi^*(-\AL)$ is the set of sub-differential of $\bxi^*$
at $-\AL$.
\end{assumption}
The following lemma shows that
Assumption~\ref{assum:LipGrad}
implies Assumption~\ref{assum:dualstrong}.
\begin{lemma}
	\label{lemma:strong}
Consider the primal problem \eqref{eq:primal}.
If Assumption \ref{assum:LipGrad} holds, then the dual problem
satisfies \eqref{eq:strong} with $\mu = 1/\rho$.
\end{lemma}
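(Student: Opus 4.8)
The plan is to show that Assumption~\ref{assum:LipGrad} forces $\bxi^*$ to be strongly convex, and then to translate the strong convexity of the whole dual objective $f = G^* + \bxi^*(-\cdot)$ into the claimed Kurdyka-{\L}ojasiewicz inequality. The starting point is the cited duality between Lipschitz gradients and strong convexity: since $\bxi$ is convex with $\rho$-Lipschitz gradient, \cite[Part~E, Theorem~4.2.1]{HU01a} gives that $\bxi^*$ is $(1/\rho)$-strongly convex, hence $\AL \mapsto \bxi^*(-\AL)$ is $(1/\rho)$-strongly convex as well. Because $G^*(\AL) = g^*(X\AL)$ is convex (as a composition of the convex $g^*$ with a linear map), the sum $f$ is $(1/\rho)$-strongly convex on $\Omega$.

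Next I would invoke the standard fact that a $\mu'$-strongly convex function $f$ satisfies the quadratic growth / gradient-domination bound
\begin{equation*}
	f(\AL) - f^* \leq \frac{1}{2\mu'}\,\mathrm{dist}\bigl(\bzero, \partial f(\AL)\bigr)^2,
	\quad \forall \AL \in \Omega,
\end{equation*}
which is exactly the KL inequality with exponent $1/2$. The quickest derivation: for any $\AL$ and any subgradient $\bs' \in \partial f(\AL)$, strong convexity gives $f^* = f(\AL^*) \geq f(\AL) + \langle \bs', \AL^* - \AL\rangle + \tfrac{\mu'}{2}\|\AL^* - \AL\|^2$; minimizing the right-hand side over the direction $\AL^* - \AL$ (treating its norm as a free parameter) yields $f^* \geq f(\AL) - \tfrac{1}{2\mu'}\|\bs'\|^2$, and taking $\bs'$ to be the minimum-norm subgradient gives the bound. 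Finally I would identify $\partial f(\AL)$: since $G^*$ is differentiable everywhere (noted in the excerpt, as $g$ is strongly convex), we have $\partial f(\AL) = \nabla G^*(\AL) + \partial(\bxi^*(-\AL)) = \{\nabla G^*(\AL) + \bs : \bs \in \partial \bxi^*(-\AL)\}$, which matches the right-hand side of \eqref{eq:strong} after writing $\nabla G$ for $\nabla G^*$ consistently with the paper's notation.

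The only bookkeeping subtlety is the constant: strong convexity gives $\mu' = 1/\rho$, but the lemma claims $\mu = 1/(2\rho)$. This is harmless — a function that is $(1/\rho)$-strongly convex is also $(1/(2\rho))$-strongly convex, so the KL inequality holds a fortiori with the weaker (larger) constant $2\mu = 1/\rho$, i.e.\ with $\mu = 1/(2\rho)$; the factor of two is presumably kept to leave slack that is convenient in the convergence-rate bookkeeping later. I do not anticipate a genuine obstacle here: the entire argument is an application of two textbook facts (Fenchel duality of smoothness vs.\ strong convexity, and strong convexity $\Rightarrow$ gradient domination), plus a careful statement of the subdifferential sum rule for $f$, which is legitimate precisely because $G^*$ is smooth. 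The one point to state cleanly is that the sum rule and the minimum-norm selection hold at \emph{every} feasible $\AL \in \Omega$, not just at the optimum, so the inequality is global as required.
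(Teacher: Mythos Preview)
Your argument is correct and opens exactly as the paper does: invoke \cite[Part~E, Theorem~4.2.1]{HU01a} to get $(1/\rho)$-strong convexity of $\bxi^*$, hence of $f$. From there the two routes diverge slightly. The paper manipulates the strong-convexity inequality between $\AL$ and the optimum $\AL^*$ to obtain the quadratic-growth bound $f(\AL)-f^*\geq \tfrac{1}{2\rho}\|\AL-\AL^*\|^2$ and then asserts that \eqref{eq:strong} is proven; strictly speaking that is a different inequality (no subgradient norm appears), so an implicit appeal to the equivalence between quadratic growth and the KL inequality for convex functions is being used. Your route is more direct: you use the first-order strong-convexity inequality and minimize to land squarely on the gradient-domination form \eqref{eq:strong}, and you even recover the sharper constant $\mu=1/\rho$ before relaxing to $1/(2\rho)$. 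One small wording fix: the minimization should be over an \emph{arbitrary} point $\AL'$ in the strong-convexity lower bound $f(\AL')\geq f(\AL)+\langle\bs',\AL'-\AL\rangle+\tfrac{\mu'}{2}\|\AL'-\AL\|^2$, after which $f^*=\min_{\AL'}f(\AL')$ gives the conclusion; as phrased you first specialize to $\AL^*$ and then ``minimize over $\AL^*-\AL$'', which reads as inverted logic, though the intended computation is the standard one and correct.
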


We start from showing that the update direction is indeed a descent
direction and Algorithm~\ref{alg:linesearch} terminates within a
bounded number of steps.
\begin{lemma}
	\label{lemma:linesearch}
	If $B_t$ is chosen so that the smallest eigenvalue of $B_t$ is no
	smaller than some constant $C_1$ (which can be
	nonpositive) for all $t$, and $Q^{\AL^t}_{B_t}$ is $C_2$-strongly
	convex for some $C_2>0$ for all $t$ and $C_1 + C_2 > 0$, then the
	update direction obtained by solving \eqref{eq:quadratic} exactly
	is a descent direction, and Algorithm~\ref{alg:linesearch}
	terminates in finite steps, with the generated step size lower
	bounded by
\begin{equation*}
\eta_t \geq \min\left(1, \frac{\beta(1 - \tau) \sigma \left( C_1 + C_2
\right)}{\left\|X^T X\right\|}\right), \forall t.
\end{equation*}
\end{lemma}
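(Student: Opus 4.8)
The plan is to treat this as a standard analysis of a backtracking line search for a composite (smooth $+$ convex-nonsmooth) objective, proving the three assertions in the natural order: first bound $\Delta_t$ below (in magnitude) by a multiple of $\|\DAL^t\|^2$, then derive a one-step decrease estimate from the Lipschitz gradient of $G^*$ and the convexity of $\bxi^*$, and finally read the step-size bound off the Armijo inequality \eqref{eq:armijo}.

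\textbf{Step 1: $\DAL^t$ is a descent direction.} Since $\DAL^t$ is the exact minimizer of the $C_2$-strongly convex function $Q_{B_t}^{\AL^t}$, comparing it with the feasible point $\bzero$ gives $Q_{B_t}^{\AL^t}(\bzero) \geq Q_{B_t}^{\AL^t}(\DAL^t) + (C_2/2)\|\DAL^t\|^2$. Expanding both sides via \eqref{eq:quadratic} and using $(\DAL^t)^T B_t \DAL^t \geq C_1\|\DAL^t\|^2$, I expect this to rearrange to $\Delta_t \leq -\tfrac{1}{2}(C_1 + C_2)\|\DAL^t\|^2$ with $\Delta_t$ as in \eqref{eq:delta}. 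Because $C_1 + C_2 > 0$, this is strictly negative whenever $\DAL^t \neq \bzero$; if instead $\DAL^t = \bzero$, the optimality condition of \eqref{eq:quadratic} reads $\bzero \in \nabla G^*(\AL^t) - \partial\bxi^*(-\AL^t) = \partial f(\AL^t)$, so $\AL^t$ is already optimal and there is nothing to prove.

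\textbf{Steps 2--3: one-step decrease and step-size bound.} For $\eta \in [0,1]$ I would write $f(\AL^t + \eta\DAL^t) = G^*(\AL^t + \eta\DAL^t) + \bxi^*(-\AL^t - \eta\DAL^t)$, bound the smooth part by the quadratic upper bound coming from the $(\|X^TX\|/\sigma)$-Lipschitz gradient of $G^*$ (recorded after Assumption \ref{assum:strong}), and bound the conjugate part by convexity, $\bxi^*(-\AL^t - \eta\DAL^t) \leq (1-\eta)\bxi^*(-\AL^t) + \eta\,\bxi^*(-\AL^t - \DAL^t)$. Summing and invoking the definition of $\Delta_t$ gives $f(\AL^t + \eta\DAL^t) \leq f(\AL^t) + \eta\Delta_t + \tfrac{\|X^TX\|}{2\sigma}\eta^2\|\DAL^t\|^2$, which already yields strict decrease for small $\eta$. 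Comparing with the Armijo target $f(\AL^t) + \eta\tau\Delta_t$, condition \eqref{eq:armijo} is implied by $\tfrac{\|X^TX\|}{2\sigma}\eta\|\DAL^t\|^2 \leq (1-\tau)(-\Delta_t)$, and substituting the Step 1 bound shows it holds for every $\eta \leq (1-\tau)(C_1+C_2)\sigma/\|X^TX\|$. Since the loop in Algorithm \ref{alg:linesearch} starts at $\eta = 1$ and shrinks by $\beta$, it must accept some $\eta_t \geq \min\{1,\ \beta(1-\tau)(C_1+C_2)\sigma/\|X^TX\|\}$ after finitely many reductions, which gives both finite termination and the claimed positive lower bound depending only on $C_1,C_2,\beta,\sigma,\tau,\|X^TX\|$.

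I do not anticipate a genuine obstacle --- the argument is essentially bookkeeping --- but the one point demanding care is Step 1. Because $B_t$ is allowed to be indefinite, the quadratic term $\tfrac12(\DAL^t)^TB_t\DAL^t$ cannot be absorbed into the strong-convexity constant $C_2$ or the eigenvalue bound $C_1$ individually; the needed cancellation only appears through the combined hypothesis $C_1 + C_2 > 0$, so both constants must be carried separately throughout. A secondary subtlety is that $\bxi^*$ need not be differentiable, which is precisely why Step 2 relies on the convexity inequality for the conjugate rather than on a gradient bound.
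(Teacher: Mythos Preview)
Your proposal is correct and follows essentially the same route as the paper: bound $\Delta_t \leq -\tfrac{C_1+C_2}{2}\|\DAL^t\|^2$, combine the Lipschitz upper bound on $G^*$ with the convexity interpolation on $\bxi^*$ to get $f(\AL^t+\eta\DAL^t) \leq f(\AL^t)+\eta\Delta_t+\tfrac{\eta^2\|X^TX\|}{2\sigma}\|\DAL^t\|^2$, and then read off the same step-size lower bound $\eta_t \geq \min\{1,\beta(1-\tau)(C_1+C_2)\sigma/\|X^TX\|\}$. Your Step~1 is in fact slightly cleaner: the paper derives the inequality $Q_{B_t}^{\AL^t}(\bzero) \geq Q_{B_t}^{\AL^t}(\DAL^t) + \tfrac{C_2}{2}\|\DAL^t\|^2$ by comparing $Q_{B_t}^{\AL^t}(\DAL^t)$ with $Q_{B_t}^{\AL^t}(\lambda\DAL^t)$, applying the secant form of strong convexity, and sending $\lambda\to 1^-$, whereas you obtain it in one line from the first-order strong-convexity inequality at the minimizer (where $0\in\partial Q_{B_t}^{\AL^t}(\DAL^t)$).
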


In contrast to most Newton-type methods such as
\cite{PT07a,JL14a} that require $B_t$ to be positive definite, the
conditions required in Lemma \ref{lemma:linesearch} are weaker, as even
if $B_t$ is not positive definite,
$Q_{B_t}^{\AL^t}$ can be strongly convex when
Assumption \ref{assum:LipGrad} holds.
As our framework is more general,
we allow a broader choice of $B_t$.  In Lemma
\ref{lemma:linesearch}, consider the choice of $B_t$ in \eqref{eq:Ht},
since $\tilde{H}_{\AL_t}$ is positive semidefinite, we have that $C_1
= a^t_2$.  For $C_2$, if Assumption \ref{assum:LipGrad} holds, then
since $\bxi^*$ is $(1/\rho)$-strongly convex, we have that $C_2 = C_1
+ 1/\rho$, and otherwise $C_2 = C_1$.

Now, we are ready to show the global linear convergence of the proposed 
Algorithm \ref{alg:blockcd} for solving \eqref{eq:dual}.
\begin{theorem}
	\label{thm:duallinear}
	If Assumption \ref{assum:dualstrong} holds, there exists $C_3 > 0$
	such that $\|B_t\|\leq C_3$ for all $t$, and that the conditions
	in Lemma \ref{lemma:linesearch} are satisfied for all iterations
	for some $C_2$ and $C_1 \leq C_3$, then the sequence of dual
	objective values generated by Algorithm \ref{alg:blockcd}
	converges Q-linearly to the optimum, with a rate of
	\begin{align*}
		&~\frac{f(\AL^{t+1}) - f^*}{f\left( \AL^t \right) - f^*}\\
		\leq &~1 - \frac{\mu \left( C_1 + C_2 \right) \tau}{\mu \left(
			C_1 + C_2 \right) \tau + 2\left( \frac{\|X^T
			X\|^2}{\sigma^2} + C_3^2 \right)}\min\left\{1, \frac{\beta
			\left( 1 - \tau \right)\sigma \left( C_1 + C_2
		\right)}{\|X^T X\| } \right\}, \forall t.
	\end{align*}
\end{theorem}

\subsection{Convergence Analysis when Sub-Problems are Solved Approximately}
In practice, when $B_t$ is not diagonal, the sub-problem
\eqref{eq:quadratic} is usually solved by an iterative solver and it
is time consuming to obtain an exact solution.
In this subsection, we show that Algorithm~\ref{alg:blockcd} still
converges linearly with inexact sub-problem solutions.
Our analysis is based on that in \cite{CPL18a,WP18a} to
assume that \eqref{eq:quadratic} is solved
$\gamma$-approximately for some $\gamma \in [0,1)$, defined below.
\begin{definition}
We say that $\Delta \AL^t$ solves \eqref{eq:quadratic}
$\gamma$-approximately for some $\gamma$ if
\begin{equation}
Q_{B_t}^{\AL^t}(\DAL^t) - \min_{\DAL}\,
Q_{B_t}^{\AL^t}\left(\DAL\right)
\leq
\gamma \left( Q_{B_t}^{\AL^t}\left( \bzero \right) - \min_{\DAL}\,
Q_{B_t}^{\AL^t}\left(\DAL\right)\right).
\label{eq:approx}
\end{equation}
\end{definition}

We will show that linear convergence can be obtained as long as the problem
\eqref{eq:dual} is convex and the quadratic growth condition holds.
\begin{assumption}
\label{assum:dualstrong2}
The function $f$ in the dual problem \eqref{eq:dual}
satisfies the quadratic growth condition with some
$\mu > 0$.
That is, let $A$ be the solution set, then
\begin{equation}
	f\left(\AL\right) - f^* \geq \frac{\mu}{2} \min_{\AL^* \in A} \|
	\AL - \AL^*\|^2, \forall \AL.
	\label{eq:strong2}
\end{equation}
\end{assumption}
Notice that \cite[Theorem 5]{JB15a} has shown that \eqref{eq:strong2}
and \eqref{eq:strong} are equivalent when $f$ is convex. Here we use
this equivalent condition for the ease of the convergence proof.

We are now able to present the convergence results of the inexact version
of our algorithm.
\begin{lemma}
\label{lemma:linesearch2}
If $B_t$ is chosen so that the smallest eigenvalue of $B_t$ is no
smaller than some constant $C_1$ (can be nonpositive) for all $t$,
$Q^{\AL^t}_{B_t}$ is $C_2$-strongly
convex for some $C_2>0$ for all $t$, and $(1 + \sqrt{\gamma})C_1 + (1 -
\sqrt{\gamma}) C_2 > 0$, then the update direction obtained by solving
\eqref{eq:quadratic} at least $\gamma$-approximately for some $\gamma
\in [0,1)$ is a descent direction, and Algorithm~\ref{alg:linesearch}
	terminates in finite steps, with the generated step size lower
	bounded by
\begin{equation}
\eta_t \geq \min\left(1, \frac{\beta(1 - \tau) \sigma \left( \left( 1
	+ \sqrt{\gamma} \right)C_1 + \left( 1 - \sqrt{\gamma} \right)C_2
\right)}{\left\|X^T X\right\| \left( 1 + \sqrt{\gamma}
\right)}\right).
\label{eq:stepsize}
\end{equation}
\end{lemma}

\begin{theorem}
\label{thm:duallinear2}
If Assumption~\ref{assum:dualstrong2} holds, there exists $C_3 > 0$
such that both $\|B_t\|\leq C_3$ and the conditions in
Lemma \ref{lemma:linesearch2} are satisfied for all $t$ for
some $C_1 \in [0,C_3]$, $C_2$, and $\gamma \in [0,1)$,
then the dual objective sequence generated by Algorithm~\ref{alg:blockcd}
converges Q-linearly as follows.
\begin{align}
\nonumber
&~\frac{f\left( \AL^{t+1} \right) - f^*}{f \left( \AL^t \right) -
f^*}
\\
\label{eq:linear}
\leq
&~\begin{cases}
1 - \frac{\tau \mu}{4C_3}
\min\left\{ 1 - \gamma, \frac{\beta \left( 1 - \tau \right)\sigma
	\left( \left( 1 - \gamma \right)C_1 + \left( 1 - \sqrt{\gamma}
\right)^2 C_2\right) }{\left\|X^T X\right\|} \right\}, \text{ if }
\mu \le 2 C_3,\\
1 - \tau
\left( 1 - \frac{C_3}{\mu} \right)
\min\left\{ 1 - \gamma, \frac{\beta \left( 1 - \tau \right)\sigma
	\left( \left( 1 - \gamma \right)C_1 + \left( 1 - \sqrt{\gamma}
\right)^2 C_2\right) }{\left\|X^T X\right\|} \right\}, \text{else}.
\end{cases}
\end{align}
\end{theorem}

\subsection{Convergence of the Primal Objective Using $\bw(\AL)$}
Next, we show the convergence rate of the primal problem \eqref{eq:primal}
based on the above linear convergence results for the dual problem. 
Our analysis here needs neither Assumption \ref{assum:dualstrong} nor
Assumption \ref{assum:dualstrong2} to hold.
The following theorem is obtained from the duality gap guarantees of
the algorithms in \cite{FB15a,SSS12b}, through taking the dual iterates
generated by Algorithm~\ref{alg:blockcd} and their corresponding primal
iterates in \eqref{eq:w} as the initial point for their algorithms.

\begin{theorem}
	\label{thm:dualitygap}
	For any $\epsilon > 0$ and any
	$\epsilon$-accurate solution $\AL$ for \eqref{eq:dual},
	the $\bw$ obtained through \eqref{eq:w} is:
	\begin{enumerate}
		\item
	$( \epsilon(1+ \rho\|X^TX\| / \sigma))$-accurate for
	\eqref{eq:primal}, if Assumption \ref{assum:LipGrad} holds, or
	\item $(\max\{2 \epsilon, \sqrt{8\epsilon \|X^T X\|
			L^2 / \sigma}\})$-accurate for \eqref{eq:primal}, if
			Assumption~\ref{assum:Lipsloss} holds.
	\end{enumerate}
\end{theorem}

By noting that $\log \sqrt{1 / \epsilon} = \log(1 / \epsilon) / 2$,
we get the following corollary.
\begin{corollary}
	\label{cor:primallinear}
	If we apply Algorithm \ref{alg:blockcd} to solve a regularized ERM
	problem that satisfies either Assumption \ref{assum:LipGrad} or
	Assumption \ref{assum:Lipsloss}, and the dual objective at the
	iterates $\AL^t$ converges Q-linearly to the optimum,
	then the primal objective evaluated at the iterates $\bw^t$
	obtained from the dual iterates $\AL^t$ via \eqref{eq:w} converges
	R-linearly to the optimum at the rate given by Theorem~\ref{thm:duallinear2}.
\end{corollary}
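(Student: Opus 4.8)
The plan is to chain the per-iterate duality-gap estimate of Theorem~\ref{thm:dualitygap} with the geometric decay of the dual suboptimality that Q-linear convergence supplies. First I would record that Q-linear convergence of $\{f(\AL^t)\}$ to the dual optimum $f^*$ means there are constants $c_0 \coloneqq f(\AL^0) - f^* \ge 0$ and $r \in (0,1)$ with $f(\AL^t) - f^* \le c_0 r^t$ for all $t$, i.e.\ $\AL^t$ is a $(c_0 r^t)$-accurate solution of \eqref{eq:dual}. I would also invoke strong duality together with the KKT correspondence \eqref{eq:w}, so that $f^P(\bw(\AL^*)) = -f^* = f^P(\bw^*)$ and hence ``$\epsilon$-accurate in the primal'' for $\bw^t \coloneqq \bw(\AL^t)$ just means $f^P(\bw^t) - f^P(\bw^*) \le \epsilon$.

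Next I would split into the two assumptions exactly as Theorem~\ref{thm:dualitygap} does. Under Assumption~\ref{assum:LipGrad}, substituting $\epsilon = c_0 r^t$ into the first bound of that theorem gives
\begin{equation*}
	f^P(\bw^t) - f^P(\bw^*) \;\le\; \left(1 + \frac{\rho \|X^T X\|}{\sigma}\right) c_0 \, r^t ,
\end{equation*}
a geometric sequence with ratio $r \in (0,1)$. Under Assumption~\ref{assum:Lipsloss}, the (eventually active) branch $\sqrt{8\epsilon \|X^T X\| L^2 / \sigma}$ of the second bound, with $\epsilon = c_0 r^t$, yields
\begin{equation*}
	f^P(\bw^t) - f^P(\bw^*) \;\le\; \sqrt{\frac{8 \|X^T X\| L^2 c_0}{\sigma}} \, \bigl(\sqrt{r}\bigr)^{t},
\end{equation*}
again geometric, but now with ratio $\sqrt{r} \in (0,1)$ --- this is precisely where the observation $\log\sqrt{1/\epsilon} = \tfrac12\log(1/\epsilon)$ noted just before the corollary enters, the square root merely halving the rate. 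In both cases the primal suboptimality is dominated by a sequence tending to $0$ geometrically, which is by definition R-linear convergence of the primal objective; if one additionally wants the iterates themselves, the $\sigma$-strong convexity of $f^P$ (inherited from $g$, since $\bxi(X^T\cdot)$ is convex) and $\tfrac{\sigma}{2}\|\bw^t - \bw^*\|^2 \le f^P(\bw^t) - f^P(\bw^*)$ give $\|\bw^t - \bw^*\| \le C (\sqrt{r})^{t}$ in the first case and $\|\bw^t - \bw^*\| \le C' r^{t/4}$ in the second.

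There is essentially no analytic obstacle: the statement is a corollary of Theorem~\ref{thm:dualitygap} plus the hypothesis, and all that remains is bookkeeping of constants and exponents. The one point worth care is conceptual rather than technical --- the conclusion is genuinely \emph{R}-linear and not Q-linear, because the dual-to-primal map $\bw(\cdot)$ need not make the primal objective monotone along the iterates (cf.\ Section~\ref{subsec:practical}), so one controls $f^P(\bw^t) - f^P(\bw^*)$ only by a monotone geometric \emph{upper} bound rather than by a one-step contraction. Applying the ``output the best primal solution'' rule of Section~\ref{subsec:practical} then restores monotonicity of the reported value while retaining the same rate $r$ (or $\sqrt{r}$), so no further argument is needed there.
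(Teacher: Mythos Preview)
Your proposal is correct and follows essentially the same approach as the paper: invoke Theorem~\ref{thm:dualitygap} under each of the two assumptions to translate the geometrically decaying dual suboptimality into a geometrically decaying primal suboptimality, with the $\sqrt{\cdot}$ in the Lipschitz-loss case accounting for the halved rate noted just before the corollary. The only cosmetic difference is that the paper phrases the conclusion as an $O(\log(1/\epsilon))$ iteration count for a target primal accuracy $\epsilon$, whereas you bound $f^P(\bw^t)-f^P(\bw^*)$ directly by a geometric sequence in $t$; these are equivalent, and your remark on why the convergence is only R-linear (non-monotone primal objective) is a useful addition.
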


\iffalse
\begin{proof}
	If Assumption \ref{assum:LipGrad} holds, Theorem
	\ref{thm:duallinear} shows that the dual objective converges Q-linearly.
	Therefore, given any $\epsilon > 0$,
	from Theorem \ref{thm:dualitygap},
	it suffices to have an $(\epsilon (1 + \rho
	\|X^T X\|/\sigma))$-accurate dual solution to transform to an
	$\epsilon$-accurate primal solution,
	which takes
	\begin{equation*}
		O\left(\log\left(\frac{1}{\frac{\epsilon (\sigma + \rho
		\|X\|^2)}{\sigma}}\right)\right)
		=
		O\left(\log \frac{1}{\epsilon} \right)
	\end{equation*}
	iterations,
	showing R-linear convergence of the primal iterates.
	Note that here we omit all factors independent of $\epsilon$ to
	show the linear rate with respect to $\epsilon$, while the number
	of iterations needed can be calculated by combining Theorems
	\ref{thm:duallinear} and \ref{thm:dualitygap}.

	On the other hand,
	if Assumption \ref{assum:Lipsloss} holds,
	we assume $\epsilon < 4 \|X\|^2 L /\sigma$,
	for otherwise it does not take any iteration to get an
	$\epsilon$-accurate solution for the primal problem since $\AL = 0$
	gives the required accuracy.
	Therefore,
	from Theorem \ref{thm:dualitygap}
	we need a $O(\epsilon^2)$-accurate dual solution to make the
	corresponding primal solution $\epsilon$-accurate.
	Thus by Theorem \ref{thm:duallinear},
	we need
	\begin{equation*}
		O\left(\log \left(\frac{1}{\epsilon^2} \right) \right) =
		O\left( 2 \log \frac{1}{\epsilon} \right)
		= O\left(\log \frac{1}{\epsilon}\right)
	\end{equation*}
	iterations.
\end{proof}
\fi

\subsection{Convergence Improvement by Using the Specific Choice of
$B_t$}
\label{subsec:improved}
Our analysis so far does not consider the effect of the specific
choices of \eqref{eq:Ht} in the quadratic approximation.
These results provided a worst-case guarantee that ensures the proposed
algorithms are provably efficient even for ill-conditioned problems.
In the following, we demonstrate that in most cases, the choice of
$B_t$ in \eqref{eq:Ht} leads to a better convergence rate because it
leverages the curvature information of the original problem to improve
the problem condition.

In particular, we assume the following.

\begin{assumption}
\label{assum:improved}
In \eqref{eq:dual}, the smooth term $G^*$ is $L_B$-Lipschitz
continuously differentiable with respect to the seminorms induced by the
matrices $B_t$ defined in \eqref{eq:Ht} around the point $\AL^t$:
\begin{gather}
\nonumber
G^*\left( \AL \right) \leq G^* \left( \AL^t \right) +
\nabla G^*(\AL^t)^T \left( \AL - \AL^t \right) + \frac{L_B}{2}
\|\AL - \AL^t\|^2_{B_t},\\  \forall \AL \in \Omega \text{
close enough to $\AL^t$}, \forall t.
\label{eq:betterLip}
\end{gather}
The objective function $f$ has quadratic growth with factor $\mu_B$ with
respect to the same seminorms:
\begin{equation}
	f\left( \AL \right) - f^* \geq \frac{\mu_B}{2} \min_{\AL^* \in A}
	\left\|\AL - \AL^* \right\|^2_{B_t}, \forall t, \forall \AL \in
	\Omega.
\label{eq:betterQG}
\end{equation}
Without loss of generality, we assume $\mu_B \leq 2$.
\end{assumption}

The assumption states that using the matrix $B_t$ defined in
\eqref{eq:Ht} gives a better problem condition under the norm change
(note that $\|X^T X\| / (\mu \sigma)$ is the condition number of the
dual problem under the Euclidean norm).
The Lipschitz continuity part is only needed locally as in our
convergence proof, this constant is only used for the step size bound,
which is for the local behavior in the region between the current and
the next iterates.
When $G^*$ is quadratic, i.e., when $g(\cdot) = \|\cdot\|^2 /2 $ in
\eqref{eq:primal}, $B_t$ becomes a fixed matrix if $a_1^t$ and $a_2^t$
are fixed over $t$, and this seminorm definition is more intuitive and
can be verified easily.
In particular, in this case $\nabla^2 G (\AL) \equiv X^T
X$, so
\begin{equation}
	\begin{cases}
	\nabla^2 G &\preceq K \tilde{H}_{\AL}, \forall \AL,\\
	\nabla^2 G &\preceq \|X^T X\| I,
\end{cases}
\label{eq:bdd}
\end{equation}
where the first inequality is from the observation that
\begin{equation}
	\label{eq:reasoning}
	\left\|\sum_{i=1}^l \AL_i X_i\right\|^2 \leq K \sum_{k = 1}^K
	\left\|\sum_{i
	\in J_k} \AL_i X_i\right\|^2.
\end{equation}

We can now show the improved convergence results.
We start from that the preconditioner can increase the step size.
\begin{lemma}
\label{lemma:improvedline}
Given $\AL^t$, if we use $B_t$ defined in
\eqref{eq:Ht} to form the sub-problem \eqref{eq:quadratic}, the
sub-problem is solved at least $\gamma$-approximately for some $\gamma
\in [0,1)$, and Assumption~\ref{assum:improved} holds, then the
	obtained update direction $\Delta \AL^t$ satisfies
\begin{equation}
	\Delta_t \leq - \frac{1}{1 + \sqrt{\gamma}} \left\|\Delta
	\AL^t\right\|^2_{B_t}.
\label{eq:deltabdd}
\end{equation}
Moreover, given $\beta,\tau \in (0,1)$,
Algorithm~\ref{alg:linesearch} produces a step size lower-bounded by
\begin{equation*}
	\eta_t \ge \min \left\{1, \frac{2 \beta ( 1 -
	\tau)}{L_B (1 + \sqrt{\eta})}\right\}.
\end{equation*}
\end{lemma}
If $L_B$ is much smaller than $L$, Lemma~\ref{lemma:improvedline}
provides a step size larger than what we had from the general analysis.
By utilizing the inequalities in \eqref{eq:bdd} for an upper bound on
$L_B$, we can see that usually a step size no smaller than $a_1^t/K$
can be expected if $a_1^t \in [0,K]$.

Note that we can take the larger of the bound from
Lemma~\ref{lemma:linesearch2} and that from
Lemma~\ref{lemma:improvedline}, so we are always guaranteed to have a
bound that is no worse.
We therefore assume without loss of generality that the bound from
Lemma~\ref{lemma:improvedline} is the larger one.

We proceed on to the improved convergence speed on the dual problem.
\begin{theorem}
\label{thm:improved}
If Assumption \ref{assum:improved} holds,
and that the conditions
in Lemma~\ref{lemma:improvedline} are satisfied for all iterations,
then the sequence of dual objective values generated by Algorithm
\ref{alg:blockcd} with $B_t$ defined in \eqref{eq:Ht}
converges Q-linearly to the optimum, with the rate being
\begin{align}
\label{eq:improved0}
\frac{f\left(\AL^{t+1} \right) - f^*}{f\left( \AL^t \right) - f^*}&\leq
1 - \frac{\tau \mu_B ( 1 - \gamma)\eta_t}{4}\\
&\leq 1 - \frac{\tau \mu_B}{2} \min\left\{ frac{1 - \gamma}{2}, \frac{\beta
	\left( 1 - \tau \right) \left( 1 - \sqrt{\gamma} \right)}{L_B}
\right\}.
	\label{eq:improved}
\end{align}
\end{theorem}
When $L_B / \mu_B$ is much smaller than the condition number $\|X^T
X\| /(\mu\sigma)$ defined by the Euclidean norm, which is usually the
case as $B_t$ approximates the Hessian closely, the rate in
\eqref{eq:improved} is significantly faster than the best possible
rates in Theorems~\ref{thm:duallinear} and \ref{thm:duallinear2}
obtained by making the algorithm the proximal gradient method with a
fixed step size.
Finally, by combining Theorem~\ref{thm:improved} and
Theorem~\ref{thm:dualitygap}, we get a faster convergence rate for the
primal objective as well.

\begin{corollary}
If we apply Algorithm \ref{alg:blockcd} with $B_t$ defined in
\eqref{eq:Ht} to solve a regularized ERM problem that satisfies either
Assumption \ref{assum:LipGrad} or Assumption \ref{assum:Lipsloss}, and
Assumption~\ref{assum:improved} holds,
then the primal iterates $\bw(\AL^t)$ obtained from the dual iterates
$\AL^t$ via \eqref{eq:w} give primal objectives that converge to the
optimum R-linearly at the rate given by Theorem~\ref{thm:improved}.
\end{corollary}

\section{Related Works}
\label{sec:related}
The general convergence theory of the inexact version of our general
framework in Section~\ref{sec:method} follows from \cite{CPL18a,WP18a}
on the line of inexact variable metric methods for regularized
optimization.
The analysis of the exact version, derived independent of the theory
in \cite{CPL18a,WP18a}, is applicable to a broader choice of
sub-problems, in the sense that indefinite $B_t$ is allowed in the
exact version.

On the other hand, our focus is on how to devise a good approximation
of the Hessian matrix of the smooth term that makes
distributed optimization efficient.
Works focusing on this direction for dual ERM problems include
\cite{DP11a,TY13b,CM15b}.
\citet{DP11a} discusses how to solve the SVM dual problem
in a distributed manner.
This problem is a special case of \eqref{eq:dual}; see
Section \ref{subsec:binary} for more details.
They proposed a method called \dsvm that iteratively solves
\eqref{eq:quadratic} using the $B_t$ defined in \eqref{eq:Ht} with
$a_1^t \equiv 1, a_2^t \equiv 0$ to obtain the update direction,
while the step size $\eta_t$ is fixed to $1/K$.
Though they did not provide theoretical convergence guarantee in
\cite{DP11a},
with the understanding the SVM dual problem is quadratic, our
analysis in Lemma~\ref{lemma:improvedline} and the bound in
\eqref{eq:bdd} gives convergence guarantee for their choice.

In \cite{TY13b}, the algorithm \disdca is proposed to solve
\eqref{eq:dual} under the assumption that $g$ is strongly convex.
They consider the case $c_i \equiv 1$ for all $i$, but the algorithm
can be directly generalized to $c_i > 1$.
\disdca specifically uses
the stochastic dual coordinate descent (SDCA) method \citep{SSS12a} to
solve the local sub-problems,
while the choice of $B_t$ is picked according to the algorithm parameters.
To solve the sub-problem on machine $k$, each time SDCA samples one
entry $i_k$ from $J_k$ with replacement and minimizes the local
objective with respect to $\AL_{i_k}$.
At each iteration of their algorithm,
each time machine $k$ selects $m_k$ entries in uniform random to form
the sub-problem, and let us denote $m \coloneqq \sum_{k=1}^K m_k$.
The first variant of \disdca, called the basic variant in
\cite{TY13b}, sets $B_t$ in \eqref{eq:quadratic} as
\begin{equation*}
	(B_t)_{i,j} = \begin{cases}
		\frac{m}{\sigma} \bx_i^T \bx_j &\text{ if $\bx_i,\bx_j$ are
			from the same $X_k$
		for some $k$ sampled},\\
		0 &\text{ else,}
	\end{cases}
\end{equation*}
and the step size is fixed to $1$.
In this case, it is equivalent to splitting the data into $l$ blocks,
and the minimization is conducted only with respect to the blocks selected.
If we let $I$ be the indices not selected, then following the same
reasoning for \eqref{eq:reasoning}, we have
\begin{equation}
	\|\bd\|^2_{\nabla^2 G^*(\AL)} \leq \bd^T B_t \bd, \quad \forall \bd
	\text{ such that
	} \bd_I = \bzero, \forall \AL,
	\label{eq:bound}
\end{equation}
%where the equality holds when all $X$ are identical, $|I| =
%l - m$, and $\bd$ incurs the largest possible eigenvalue of $\nabla^2
%G^*(\AL)$ (which is equivalent to $1 / \sigma$ as the Lipschitz
%parameter suggests).
Therefore, by \eqref{eq:bound} and the Lipschitz-continuous
differentiability of $G^*$, it is not hard to see that in this
case minimizing $Q^{\AL}_{B_t}$ directly results in a certain amount
of function value decrease.
The analysis in \cite{TY13b} then shows that the primal iterates
$\{\bw_t\}$ obtained by substituting the dual iterates $\{\AL_t\}$
into \eqref{eq:w} converges linearly to the optimum when all $\xi_i$
have Lipschitz continuous gradient and converges with a rate of
$O(1/\epsilon)$ when all $\xi_i$ are Lipschitz continuous
by using some proof techniques similar to that in \cite{SSS12b}.
As we noted in Section \ref{sec:analysis}, this is actually the
same as showing the convergence rate of the dual objective and then
relating it to the primal objective.

The second approach in \cite{TY13b}, called the practical variant,
considers
\begin{equation*}
	(B_t)_{i,j} = \begin{cases}
		\frac{K}{\sigma} \bx_i^T \bx_j &\text{ if } \pi(i) = \pi(j),\\
		0 &\text{ else,}
	\end{cases}
\end{equation*}
and takes unit step sizes.
Similar to our discussion above for their basic variant,
$Q^{\AL}_{B_t}$ in this case is also an upper bound for the function
value decrease if the step size is fixed to $1$,
and we can expect this method to work better than the basic variant as
the approximation is closer to the real Hessian and the scaling
factor is closer to one.
Empirical results show that this variant is as expected faster than
the basic variant,
despite the lack of theoretical convergence guarantee in \cite{TY13b}.

Both \dsvm and the practical variant of \disdca are generalized to a
framework proposed in \cite{CM15b} that discusses the relation between
the second-order approximation and the step size.
In particular, their theoretical analysis for fixed step sizes starts
from \eqref{eq:bdd} and the Lipschitz continuous differentiability of
$G^*$ to form safe upper bounds for the function value decrease.
They considered solving \eqref{eq:quadratic} with $B_t$ defined as
\begin{equation}
	(B_t)_{i,j} = \begin{cases}
		\frac{a}{\sigma} \bx_i^T \bx_j &\text{ if } \pi(i) = \pi(j),\\
		0 &\text{ else,}
	\end{cases}
	\label{eq:B}
\end{equation}
and showed that for $a \in [1,K]$, a step size of $a / K$ is enough
to ensure convergence of the dual objective,
similar to our result in Lemma~\ref{lemma:improvedline}.
As we discussed above for \cite{DP11a} and \cite{TY13b}, this choice
can be proven to ensure objective value decrease.
Unlike \disdca which is tied to SDCA,
their framework allows arbitrary solver for the local sub-problems,
and relates how precisely the local sub-problems are solved with
the convergence rate.
If we ignore the part of local precision,
the convergence rates of their framework shown in \cite{CM15b} is
similar to that of \cite{TY13b} for the basic variant of \disdca.
This work therefore provides theoretical convergence rates
similar to the basic variant of \disdca for both \dsvm and the
practical variant of \disdca,
and their experimental results shows that the practical variant of
\disdca is indeed the most efficient.
Notice that despite empirically bettering the proximal gradient
method, these works all provide convergence rates no better than it.
Fortunately, as these methods can be seen as special cases of our framework,
with our analysis in Section \ref{subsec:improved}, we can explain why
they are all faster than the proximal gradient method.
Our analysis in Section~\ref{subsec:improved} is, up to our best
knowledge, a novel result that shows how improved convergence speed
can be obtained when the second-order approximation is selected
properly.

When we use the $B_t$ considered by those works in
\eqref{eq:quadratic}, the major algorithmic difference is that we do
not take a pre-specified safe step size.
Instead, we dynamically find a possibly larger step size that,
according to \eqref{eq:improved0}, can provide more function decrease
than directly applying the lower bound in
Lemma~\ref{lemma:improvedline}.
We can see that the choice of $a = 1$ in \eqref{eq:B} gives too
conservative the step size, while the choice of $a=K$ might make the
quadratic approximation in \eqref{eq:quadratic} deviate from the real
Hessian too far.
In particular, assuming $\nabla^2 g \equiv I / \sigma$, the case of $a=1$
makes the Frobenius norm of the difference between $\nabla^2 G^*$ and
$B_t$ the smallest, while other choices increase this value.
This suggests that using $a=1$ should be the best approximation one
can get,
but even directly using $\nabla^2 G^*$ might not guarantee decrease of
the objective value.
Our method thus provides a way to deal with this problem by adding a
low-cost line search step.
Moreover, by adding Assumption~\ref{assum:dualstrong2} that holds true
for most ERM losses (see discussion in the next section), we are able
to show linear convergence for a broader class of problems.

Most other distributed ERM solvers directly optimize
\eqref{eq:primal}.
Primal batch solvers for ERM that require
computing the full gradient or Hessian-vector products are
inherently parallelizable and can be easily extended to
distributed environments as the main
computations are matrix-vector products like $X^T\bw$.
It mostly takes only some implementation modifications to make
these approaches efficient in distributed environments.
Among them, it has been shown that distributed truncated-Newton
\citep{YZ14a,CYL14a}, distributed limited-memory BFGS \citep{WC14a},
and the limited-memory common-directions method \citep{CPL17a} are the
most efficient ones in practice.
These methods have the advantage that their convergences are invariant
of the data partition, though with the additional requirement that the primal
objective is differentiable or even twice-differentiable in comparison
with ours.
However, there are important cases of ERM problems that do not possess
differentiable primal objective function such as the SVM problem.
In these cases, one still needs to consider the dual approaches, for
otherwise the convergence might be extremely slow.
Another popular distributed algorithm for solving \eqref{eq:primal}
without requiring differentiability is the alternating direction
method of multipliers (ADMM) \citep{SB11b}, which is widely used in
consensus problems.
However, it has been shown in \cite{TY13b} and \citet{YZ14a} that
\disdca and truncated-Newton outperforms ADMM on various ERM problems.

Many lately proposed distributed optimization methods
focus on the communication efficiency.
By increasing the computation per iteration, they are able to use fewer
communication rounds to obtain the same level of objective value.
However, these approaches either rely on the stronger assumption that
data points across machines are independent and identically
distributed (i.i.d.), or has higher computational dependency on the
dimensionality of the problem.
The former assumption may not hold in practice, while the latter
results in computational-inefficient and thus impractical methods.
For example, \citet{YZ15a} consider a damping Newton method with a
preconditioned conjugate gradient (PCG) method to solve the Newton
linear system, with the preconditioner being the Hessian from a
specific machine.
However, the computational cost of PCG is much higher because each
iteration of which involves inverting the local Hessian.
The distributed SVRG method proposed by \citet{JDL15a} has
good computational and communication complexity simultaneously, but
needs the assumption that data points on each machine follow a
certain distribution and requires overlapping data points on
different machines. This implies more communication in
advance to distribute data points, which is prohibitively
expensive when the data volume is huge.
Indeed, instead of a real distributed environment, their experiment is
simulated in a multi-core environment because of this constraint.
We therefore exclude comparison with these methods.

Recently, \citet{ZS17a} adopted for \disdca an acceleration technique
in \cite{HL15a,SSS13a}, resulting in a theoretically faster algorithm.
This technique repeatedly uses \disdca to solve a slightly modified
objective every time to some given precision, and reconstruct a new
objective function based on the obtained iterate and the previous iterate.
The same technique can also be applied to this work in the same
fashion by replacing \disdca with the proposed method.
Therefore, we focus on the comparison with methods before applying the
acceleration technique, with the understanding that the faster method
of the same type before acceleration will result in a faster method
after acceleration as well.
Moreover, what is the best way to apply the acceleration technique to
obtain the best efficiency for distributed optimization of ERM
problems is itself another open research problem.
Issues including whether to apply it on the primal or the dual problem,
should restarting be considered, how to estimate the unknown
parameters, and so on, are left to future work.

\section{Applications}
\label{sec:applications}
In this section, we apply the proposed algorithm in
Section~\ref{sec:method} to solve various regularized ERM problems and
discuss techniques for improving the efficiency by utilizing the
problem structures.
We will demonstrate the empirical performance of the proposed
algorithms in Section~\ref{sec:exp}.

We first show that a class of problems satisfies
Assumption~\ref{assum:dualstrong2}.
\begin{lemma}[{\cite[Theorem~10]{IN19a}}]
	\label{lemma:strong2}
	Consider a problem of the following form
	\begin{subequations}
		\label{eq:linear-strong}
		\begin{align}
		\min_{\AL} &\quad F(\AL)\coloneqq g(A \AL) + \bb^T
		\AL\label{eq:glinear}\\
		\text{subject to} &\quad C\AL \leq \bd,
		\label{eq:poly}
	\end{align}
	\end{subequations}
	where $g$ is strongly convex
	with any feasible initial point $\AL^0$.
	Then $F$ satisfies the condition \eqref{eq:strong} in the level
	set $\{\AL\mid C\AL \leq \bd,\quad F(\AL) \leq F(\AL^0)\}$ for
	some $\mu > 0$ that depends on the initial point $\AL^0$.
	If the constraint is a polytope, then the condition
	\eqref{eq:strong} holds for all feasible $\AL$.
\end{lemma}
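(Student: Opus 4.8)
The plan is to read condition \eqref{eq:strong} as a {\L}ojasiewicz/Luo--Tseng error bound for the structured problem \eqref{eq:linear-strong}, obtained by combining Hoffman's bound on a polyhedron with the strong convexity of $g$ pulled back through the linear map $A$. Write $F^*$ for the minimum of $F$ over $\{C\AL \le \bd\}$ (the ``$f^*$'' of \eqref{eq:strong}), $\mathcal S$ for its solution set, and let $\sigma > 0$ be the strong convexity modulus of $g$. First I would pin down the structure of $\mathcal S$: given two minimizers, their midpoint is feasible, and applying the strong convexity of $g$ to $g(A\cdot)$ at that midpoint forces the two images under $A$ to coincide (else the midpoint strictly decreases the objective). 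Hence $A\AL$ equals a fixed $\bar{\bz}$ on $\mathcal S$, and then $\bb^T\AL = F^* - g(\bar{\bz})$ is also constant there, say $\bar c$, so
\begin{equation*}
\mathcal S = \{\AL :\ A\AL = \bar{\bz},\ \bb^T\AL = \bar c,\ C\AL \le \bd\}
\end{equation*}
is a polyhedron. Hoffman's lemma then gives a constant $\theta > 0$ depending only on $A, \bb, C$ with $\operatorname{dist}(\AL, \mathcal S) \le \theta(\|A\AL - \bar{\bz}\| + |\bb^T\AL - \bar c|)$ for every feasible $\AL$, since such a point can violate only the two equality blocks of $\mathcal S$.

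Next I would establish quadratic growth in the $A$-direction. Fix a feasible $\AL$ in the level set and put $\AL^* := P_{\mathcal S}(\AL)$; optimality of $\AL^*$ together with polyhedrality of the constraint yields a subgradient $\bp^* \in \partial g(\bar{\bz})$ with $\langle A^T\bp^* + \bb,\, \AL - \AL^* \rangle \ge 0$. Plugging this into the strong-convexity lower bound $g(A\AL) \ge g(\bar{\bz}) + \langle \bp^*, A\AL - \bar{\bz}\rangle + \tfrac{\sigma}{2}\|A\AL - \bar{\bz}\|^2$ and using $\bb^T\AL - \bar c = \bb^T(\AL - \AL^*)$ gives $F(\AL) - F^* \ge \tfrac{\sigma}{2}\|A\AL - \bar{\bz}\|^2$, hence $\|A\AL - \bar{\bz}\| \le \sqrt{2(F(\AL) - F^*)/\sigma}$. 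Since the right side is bounded by a constant on the level set of $\AL^0$, the image $A\AL$ stays in a bounded set there, on which $g$ is Lipschitz with some constant $L_g$; combined with $F(\AL) - F^* = (g(A\AL) - g(\bar{\bz})) + (\bb^T\AL - \bar c)$ this bounds $|\bb^T\AL - \bar c|$ by $(F(\AL) - F^*) + L_g\|A\AL - \bar{\bz}\|$, and replacing the first summand by $\sqrt{F(\AL^0) - F^*}\,\sqrt{F(\AL) - F^*}$ (legitimate on the level set) shows both residuals are $O(\sqrt{F(\AL) - F^*})$. With the Hoffman bound this yields $\operatorname{dist}(\AL, \mathcal S) \le c\sqrt{F(\AL) - F^*}$ for a constant $c > 0$ depending on $\AL^0$.

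To finish, I would use convexity of $F$: for any $\bs \in \partial F(\AL)$, $F^* = F(\AL^*) \ge F(\AL) + \langle \bs, \AL^* - \AL\rangle$ gives $F(\AL) - F^* \le \|\bs\|\operatorname{dist}(\AL, \mathcal S) \le c\|\bs\|\sqrt{F(\AL) - F^*}$, so $\sqrt{F(\AL) - F^*} \le c\|\bs\|$ and $F(\AL) - F^* \le c^2\|\bs\|^2$; taking $\bs$ of minimal norm gives \eqref{eq:strong} with $\mu = 1/(2c^2)$. When $\{C\AL \le \bd\}$ is a polytope the whole feasible region is bounded, so $L_g$ and the surrogate bound for $F(\AL) - F^*$ may be taken uniform over all feasible $\AL$ (and $\theta$ is already data-dependent only), producing a single $\mu$ valid everywhere. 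I expect the main obstacle to be exactly the uniform constants in the second paragraph --- controlling the $\bb^T\AL$-residual and the Lipschitz constant of $g$ over the level set --- which is where boundedness of the level set (or the polytope) is essential; this is harmless in the applications, where $g$ is finite-valued and hence locally Lipschitz.
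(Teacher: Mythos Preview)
Your argument is correct and self-contained, but it takes a different route from the paper. The paper's proof is essentially a two-line citation: it invokes \cite[Theorem~4.6]{PWW13a} to assert that problems of the form \eqref{eq:linear-strong} satisfy a Luo--Tseng error bound on the level set, then appeals to \cite[Theorem~5]{JB15a} for the equivalence between that error bound and the KL inequality \eqref{eq:strong}; the polytope case is handled by continuity of the error-bound constant and compactness. You instead unpack the content of those cited results directly: you characterize the solution set as a polyhedron, apply Hoffman's bound, derive quadratic growth in the $A$-direction from strong convexity and KKT, control the linear residual via local Lipschitzness of $g$ on the bounded $A$-image of the level set, and finally convert the resulting error bound into \eqref{eq:strong} through the subgradient inequality. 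This is precisely the machinery underlying the theorems the paper cites, so the approaches are mathematically the same at heart; yours has the advantage of being self-contained and making the dependence of $\mu$ on $\AL^0$ explicit (through $L_g$ and $F(\AL^0)-F^*$), at the cost of the mild extra hypothesis that $g$ is finite-valued so that it is locally Lipschitz --- which, as you note, holds in every application in the paper since there $g$ is $\|\cdot\|^2/2$.
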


\subsection{Binary Classification and Regression}
\label{subsec:binary}
The first case is the SVM problem~\citet{BB92a,VV95a}
where $c_i \equiv 1$,
and given $C>0$,
\begin{equation*}
	\xi_i(z) \equiv C\max(1 - y_i z, 0),\quad
	g(\bw) = \frac12 \|\bw\|^2,
\end{equation*}
with $y_i \in \{-1,1\}, \forall i$.
Obviously, Assumption \ref{assum:strong} holds with $\sigma =
1$ in this case,
and $\xi_i$ are $1$-Lipschitz continuous.
Based on a straightforward derivation, we have that
\begin{equation}
	g^*(X\AL) = \frac12 \|X \AL\|^2,\quad
	\xi^*_i(-\alpha_i) = \indicator_{[0,C]}(\alpha_i y_i) - \alpha_i y_i,
\end{equation}
where
\[
	\indicator_{[0,C]}(x) \coloneqq
	\begin{cases}
	0 & \text{ if } x \in [0,C],\\
	\infty & \text{ else}.
\end{cases}
\]
It is clear that $\xi_i$ are $C$-Lipschitz continuous.
For the dual problem, we see that the constraints are of the form
\eqref{eq:poly}, $g^*$ is strongly convex with respect to $X\AL$, and
the remaining term $-\alpha_i y_i$ is linear, so the objective
function satisfies the form \eqref{eq:glinear}.
Therefore,
by Lemma \ref{lemma:strong}, \eqref{eq:strong} is satisfied.
Therefore all conditions of Assumption \ref{assum:Lipsloss} are
satisfied.
Hence from Corollary \ref{cor:primallinear}, our algorithm enjoys
linear convergence in solving the SVM dual problem.

Besides, we can replace the hinge loss (L1 loss) in SVM with the
squared-hinge loss (L2 loss):
\begin{equation*}
	\xi_i(z) \equiv C \max(1 - y_i z, 0)^2,
\end{equation*}
and then $\xi_i$ becomes differentiable, with the gradient being
Lipschitz continuous.
Therefore, Assumption \ref{assum:LipGrad} is satisfied,
and we can apply Corollary \ref{cor:primallinear}.
We have that
\begin{equation*}
	\xi^*_i(-\alpha_i) = \indicator_{[0,\infty)}(\alpha_i y_i) -
		\alpha_i y_i + \frac{\alpha_i^2}{4C}.
\end{equation*}

One can observe that the dual objectives of the hinge loss and
the squared-hinge loss SVMs are both quadratic,
hence we can apply the exact line search approach in
\eqref{eq:quadlinesearch} with very low cost by utilizing the $\Delta
\bv$ and $\bv$ vectors.

Another widely used classification model is logistic regression,
where
\begin{equation*}
	\xi_i(z) \coloneqq C \log(1 + \exp(-y_i z)).
\end{equation*}
It can then be shown that the logistic loss is infinitely differentiable, 
and its gradient is Lipschitz continuous.
Thus, Assumption \ref{assum:LipGrad} is satisfied.

An analogy of SVM to regression is support vector regression (SVR)
by \citet{BB92a,VV95a} such that the $g$ function is the same and
given $C> 0$ and $\epsilon \geq 0$,
\begin{equation*}
	\xi_i(z) \coloneqq \begin{cases}
		C \max(|z - y_i| - \epsilon, 0),& \text{or }\\
		C \max(|z - y_i| - \epsilon, 0)^2,
	\end{cases}
\end{equation*}
with $y_i\in\R$ for all $i$.
Similar to the case of SVM, the first case satisfies Assumption
\ref{assum:Lipsloss}\footnote{Up to an equivalent reformulation of the
	dual problem by setting $\AL = \AL^+ - \AL^-$ and $\AL^+,\AL^-
	\geq 0$.}
and the latter satisfies Assumption \ref{assum:LipGrad}.
Often the first variant is called SVR and the second variant is called
L2-loss SVR.
Note that the degenerate case of $\epsilon = 0$ corresponds to the
absolute-deviation loss and the least-square loss.
In the case of the least-square loss, we again can use the exact line
search approach because the objective is a quadratic function.

Note that one can also replace $g$ with other strongly convex
functions, but it is possible that \eqref{eq:strong} is not satisfied.
In this case, one can establish some sublinear convergence
rates by applying similar techniques in our analysis, but we omit
these results to keep the description straightforward.

A short summary of various $\xi_i$'s we discussed in this section is
in Table \ref{tbl:losses}.

\begin{table}
\centering
    \begin{tabular}{@{}l@{ }|@{ }r@{ }r@{ }r@{}}
	Loss name  &     $\xi_i(\bz)$  &
	Assumption &  $\xi^*_i(-\AL)$\\
	\hline
	L1-loss SVM &  $C \max(1 - y_i z, 0)$  &    \ref{assum:Lipsloss} &
	$\indicator_{[0,C]}(\alpha y_i) - \alpha y_i$  \\
	L2-loss SVM &  $C \max(1 - y_i z, 0)^2$  &  \ref{assum:LipGrad} &
	$\indicator_{[0,\infty)}(\alpha y_i) - \alpha y_i + \frac{\alpha^2}{4C}$ \\
		\multirow{2}{*}{logistic regression}   &  \multirow{2}{*}{$C
		\log(1 + \exp(-y_i z))$}   &
		\multirow{2}{*}{\ref{assum:LipGrad}} &  $\indicator_{[0,C]}(\alpha y_i)+ \alpha y_i \log (\alpha y_i) $  \\
	&&&
	$+ (C - \alpha y_i) \log(( C - \alpha y_i))$ \\
	\hline
	SVR    &  $C \max(|z - y_i| - \epsilon, 0)$  & 
	\ref{assum:Lipsloss} &	$\indicator_{[-C,C]}(\alpha) + \epsilon |\alpha_i| - \alpha y_i$  \\
	L2-loss SVR &  $ C \max(|z - y_i| - \epsilon, 0)^2 $  & 	\ref{assum:LipGrad} & 	$ \epsilon |\alpha_i| - \alpha y_i + \frac{1}{4C}\alpha^2 $  \\
	Least-square regression   &  $ C (z - y_i)^2 $    & 	\ref{assum:LipGrad} &  $ -\alpha	y_i + \frac{1}{4C}\alpha^2 $ \\
	\end{tabular}
	\caption{Summary of popular ERM problems for binary classification (the 
	range of $y=\{1,-1\}$) and for regression (the range of $y=R$), where 
	our approach is applicable. Our approach is also applicable to the 
	extensions of these methods for multi-class classification and structured 
	prediction.}
	\label{tbl:losses}
\end{table}

\subsection{Multi-class Classification}
\label{subsec:multiclass}
For the case of multi-class classification models, we assume without loss
of generality that $c_i \equiv T$ for some $T > 1$, and $y_i \in
\{1,\dotsc,T\}$ for all $i$.
The first model we consider is the multi-class SVM model proposed by
\citet{KC02d}.
Given an original feature vector $\bx_i \in \R^{\tilde{n}}$,
the data matrix $X_i$ is defined as $(I_T - \be_{y_i}
\b1^T) \otimes \bx_i $,
where $I_T$ is the $T$ by $T$ identity matrix,
$\be_i$ is the unit vector of the $i$-th coordinate, $\b1$ is the
vector of ones, and $\otimes$ denotes the Kronecker product.
We then get that $n = T \tilde{n}$, and the multi-class SVM model uses
\begin{align}
	g(\bw) &\coloneqq \frac12 \|\bw\|^2,\nonumber\\
	\xi_i(\bz) &\coloneqq C \max\left(\max_{1\leq j \leq T} 1 - z_i, 0\right).
	\label{eq:multiclass}
\end{align}
From the first glance, this $\xi$ seems to be not even Lipschitz
continuous.
However, its dual formulation is
\begin{align}
	\min_{\AL}\quad& \frac{1}{2}\|X \AL\|^2 + \sum_{i=1}^l \sum_{j
	\neq y_i} (\AL_i)_j\nonumber\\
	\text{subject to}\quad& \AL_i^T \b1 = 0, i=1,\dotsc,l,
	\label{eq:mcsvm}\\
	& (\AL_i)_j \leq 0, \forall j \neq y_i, i=1,\dotsc,l,\nonumber\\
	& (\AL_i)_{y_i} \leq C,i=1,\dotsc,l,\nonumber
\end{align}
showing the boundedness of the dual variables $\AL$.
Thus, the primal variable $\bw(\AL) = X\AL$ also lies in a bounded
area.
Therefore, $\xi_i(X_i^T\bw(\AL))$ also has a bounded domain, indicating
that by compactness we can find $L\geq 0$ such that this continuous
function is Lipschitz continuous within this domain.
Moreover, the formulation \eqref{eq:mcsvm} satisfies the form
\eqref{eq:linear-strong}, so \eqref{eq:strong} holds by Lemma
\ref{lemma:strong}.
Thus, Assumption \ref{assum:Lipsloss} is satisfied.
Note that in this case the objective of \eqref{eq:mcsvm} is a
quadratic function so once again we can apply the exact line search
method on this problem.

As an analogy of SVM, one can also use the squared-hinge
loss for multi-class SVM \citep{CPL12a}.
\begin{equation}
	\xi_i(\bz) \coloneqq C \max\left(\max_{1\leq j \leq T} 1 - z_i,
	0\right)^2.
	\label{eq:multiclass2}
\end{equation}
The key difference to the binary case is that the squared-hinge loss
version of multi-class SVM does not possess a differentiable objective
function.
We need to apply a similar argument as above to argue the
Lipschitzness of $\xi$.
The dual formulation from the derivation in \cite{CPL12a} is
\begin{align*}
	\min_{\AL}\quad& \frac{1}{2}\|X \AL\|^2 + \sum_{i=1}^l \sum_{j
	\neq y_i} (\AL_i)_j + \sum_{i=1}^l \frac{((\AL_i)_{y_i})^2}{4C} \nonumber\\
	\text{subject to}\quad& \AL_i^T \b1 = 0, i=1,\dotsc,l,\\
	& (\AL_i)_j \leq 0, \forall j \neq y_i, i=1,\dotsc,l,\nonumber
\end{align*}
suggesting that each coordinate of $\AL$ is only one-side-bounded,
so this is not the case that $\AL$ lies explicitly in a compact set.
However, from the constraints and the objective, we can see that given
any initial point $\AL^0$, the level set $\{\AL\mid f(\AL) \leq
f(\AL^0)\}$ is compact.
Because our algorithm is a descent method, throughout the
optimization process, all iterates lie in this compact set.
This again indicates that $\bw(\AL)$ and $X_i^T \bw(\AL)$ are within a
compact set, proving the Lipschitzness of $\xi_i$ within this set.
The condition \eqref{eq:strong} is also satisfied following the same
argument for the hinge-loss case.
Therefore, Assumption \ref{assum:Lipsloss} still holds, and it is
obvious that we can use the exact line search method here as well.

We can also extend the logistic regression model to the multi-class
scenario.
The loss function, usually termed as multinomial logistic regression
or maximum entropy, is defined as
\begin{equation*}
	\xi_i(\bz) \coloneqq -\log \left( \frac{\exp(z_{y_i})}{\sum_{k=1}^T
\exp(z_k)}\right).
\end{equation*}
It is not hard to see that Assumption \ref{assum:LipGrad} holds for
this problem. For more details of its dual problem and an efficient
local sub-problem solver, interested readers are referred to
\cite{HFY10a}.

\subsection{Structured Prediction Models }
In many real-world applications, the decision process involves making multiple
predictions over a set of interdependent output variables, whose
mutual dependencies can be modeled as a structured object such as a linear chain, a tree, or a graph.
As an example, consider recognizing a handwritten word, where characters
are output variables and together form a sequence structure. It is important to consider the correlations between the predictions
of adjacent characters to aid the individual predictions of characters.
A family of models designed for such problems are called structured
prediction models. In the following, we discuss how to apply Algorithm \ref{alg:blockcd}
in solving SSVM \citep{IT05a,BT04a}, a popular structured prediction
model.

Different from the case of binary and multi-class classifications,
the output in a structured prediction problem is a set of
variables  $\byi \in \yset_i$, and $\yset_i$ is the set of all feasible
structures. The sizes of the input and the output variables are often
different from instance to instance.
For example, in the handwriting recognition problem, each element in $\by$ represents
a character and $\yset$ is the set of all possible words. Depending on
the number of characters in the words, the sizes of inputs and outputs
vary.

Given a set of observations $\{(\bx_i,\by_i)\}_{i=1}^l$,
SSVM solves
\begin{align}
	\min_{\bw, \bpsi} &\quad \frac12 \|\bw\|^2+ C \sum_{i=1}^l
	\ell(\psi_i) \nonumber\\
	\text{subject to} &\quad \bw^T \phi(\by, \by_i, \bx_i)
	\geq \Delta (\by_i, \by) - \psi_i, \quad \forall \by \in \yset_i,\quad i = 1,\dotsc,l,
	\label{eq:ssvm}
\end{align}
where
$C>0$ is a predefined parameter,
$\phi(\by,\by_i,\bx_i) = \Phi(\bx_i, \by_i) - \Phi(\bx_i, \by),$ and
$\Phi(\bx,\by)$ is the generated feature vector depending on
both the input $\bx$ and the output structure $\by$. By defining features 
depending on the output, one can encode the output structure into the model 
and learn parameters to model the correlation between output variables.   
The constraints in problem \eqref{eq:ssvm} specify that the difference between the score assigned to the correct output structure should be
higher than a predefined scoring metric $\Delta(\by,\by_i)\geq 0$ that represents the distance between output structures.
If the constraints are not satisfied, then a penalty term $\psi_i$ is introduced to the objective function, where
$\ell(\psi)$ defines the loss term. Similar to the binary and
multi-class classifications cases, common choices of the loss
functions are the L2 loss and the L1 loss.
The SSVM problem \eqref{eq:ssvm} fits in our framework,
depending on the definition of the features, one can define $X_i$
to encode the output $\by$. One example is to set every column of
$X_i$ as a vector of the form $\phi(\by,\by_i,\bx_i)$ with different
$\by \in \yset_i$, and let $\xi_i(X^T_i\bw)$ in problem
\eqref{eq:primal} be
\begin{equation}
	\xi_i(\bz) =  C\max_{\by \in \yset_i}\ell(\Delta (\by_i,
	\by) -  \bz_\by).
\label{eq:ssvmloss}
\end{equation}
Here, we use the order of $\by$ appeared in the columns of $X_i$ as the
enumerating order for the coordinates of $\bz$.

We consider solving problem \eqref{eq:ssvm} in its dual form~\citep{IT05a}.
One can clearly see the similarity between \eqref{eq:ssvmloss} and
the multi-class losses \eqref{eq:multiclass} and \eqref{eq:multiclass2},
where the major difference is that the value $1$ in the multi-class
losses is replaced by $\Delta(\by_y,\by)$.
Thus, it can be expected that the dual problem of SSVM is similar to
that of multi-class SVM.
With the L1 loss,
the dual problem of \eqref{eq:ssvm} can be written as,
\begin{align}
	\min_{\AL}\quad& \frac12 \|X \AL\|^2 - \sum_{i=1}^l
	\sum_{\by\in \yset_i,\by \neq \by_i} \Delta(\by_i, \by) (\AL_i)_{\by_i}\nonumber\\
\label{eq:dual-ssvm1}
		\mbox{subject to} \quad&
		\AL_i^T \b1 = 0, i=1,\dotsc,l,\\
		&(\AL_i)_{\by} \leq 0, \forall \by \in \yset_i, \by \neq
		\by_i,i=1,\dotsc,l,\nonumber\\
		&(\AL_i)_{\by_i} \leq C, i=1,\dotsc,l.
		\nonumber
\end{align}
With the L2 loss, the dual of \eqref{eq:ssvm} is
\begin{align}
	\min_{\AL}\quad& \frac12 \|X \AL\|^2 - \sum_{i=1}^l
	\sum_{\by \neq \by_i} \Delta(\by_i, \by) (\AL_i)_{\by_i} +
	\sum_{i=1}^l \frac{((\AL_i)_{\by_i})^2}{4C}\nonumber\\
\label{eq:dual-ssvm2}
		\mbox{subject to} \quad&
		\AL_i^T \b1 = 0, i=1,\dotsc,l,\\
		&(\AL_i)_{\by} \leq 0, \forall \by \in \yset_i, \by \neq
		\by_i,i=1,\dotsc,l.\nonumber
\end{align}
As the dual forms are almost identical to that shown in Section
\ref{subsec:multiclass},
it is clear that all the analysis and discussion can be directly
used here.

	The key challenge of solving problems \eqref{eq:dual-ssvm1} and
	\eqref{eq:dual-ssvm2} is that for most applications,	the size
	of $\yset_i$ and thus the dimension of $\AL$  is exponentially
	large (with respect to the length of $\bx_i$),
so optimizing over all variables is unrealistic.
Efficient dual methods~\citep{IT05a,SLJ13a,ChangYi13}  maintain a small working set of dual
variables to optimize such that the remaining variables are fixed to be zero.
These methods then iteratively enlarge the working set until the
problem is well-optimized.\footnote{This approach is related to applying the cutting-plane methods
to solve the primal problem \eqref{eq:ssvm} \citep{IT05a,TJ08a}.}
The working set is selected using the sub-gradient of \eqref{eq:ssvm}
with respect to the current iterate.
Specifically, for each training instance $\bx_i$, we add the dual variable $\alpha_{i,\hat{\by}}$
corresponding to the structure $\hat{\by}$ into the working set, where
\begin{equation}
	\label{eq:loss-augmented-inference}
\hat{\by} = \arg\max_{\by \in \yset_i} \bw^T \phi(\by,\by_i,\bx_i) - \Delta(\by_i, \by).
\end{equation}
Once $\AL$ is updated, we update $\bw$ accordingly.
We call the step of computing
eq. \eqref{eq:loss-augmented-inference} ``inference'',
and call the part of optimizing Eq. \eqref{eq:dual-ssvm1} or \eqref{eq:dual-ssvm2} over a fixed working set ``learning''.
When training SSVM in a distributed manner, the learning step involves
communication across machines.  Therefore, inference and learning
steps are both expensive.  Our algorithm can be applied in the
learning step to reduce the rounds of communication, and linear
convergence rate for solving the problem under a fixed working set can
be obtained.

SSVM is an extension of multi-class SVM for structured prediction.
Similarly, conditional random fields (CRF)~\citep{LaffertyMcPe01}
extends multinomial logistic regression.
The loss function in CRF is defined as the negative log-likelihood:
\begin{equation}
	\xi_i(\bz) \coloneqq -\log \left(\frac{\exp(\bz_{\by_i})}
	{\sum_{\by\in \yset_i} \exp (\bz_{\by})}\right).
	\label{eq:crf}
\end{equation}
Similar to multinomial logistic regression,
Assumption \ref{assum:LipGrad} holds for \eqref{eq:crf}.

\section{Experiments}
\label{sec:exp}
We conduct experiments on different ERM problems to examine the
efficiency of variant realizations of our framework.
The problems range from binary classification (i.e., $c_i \equiv 1$) to
problems with complex output structures (i.e., each $c_i$ is different),
and from that exact line search can be conducted to that
backtracking using Algorithm \ref{alg:linesearch} is applied.
For each problem, we compare our method with the state of the art,
and the dataset is partitioned evenly across machines in terms of the
number of data points without randomly shuffling the instances in
advance, so it is possible that the data distributions on different
machines vary.

For the case of $c_i \equiv 1$, we consider two linear classification
tasks.
To evaluate the situation of larger $c_i$, we take SSVM as the
exemplifying application.

\subsection{Binary Linear Classification}
\label{subsec:binaryexp}
\begin{table}[tb]
\begin{center}
\caption{Data statistics.}
\label{tbl:data}
\begin{tabular}{@{}l|r|r|r@{}}
Data set & \#instances ($l$) & \#features ($n$) & \#nonzeros\\
\hline
\webspam & 350,000 & 16,609,143 & 1,304,697,446 \\
\uu & 2,396,130 & 3,231,961 & 277,058,644\\
\kddb & 19,264,097 & 29,890,095 & 566,345,888\\
\end{tabular}
\end{center}
\end{table}

The proposed framework is suitable for training large machine learning models 
on data where numbers of instances and features are both large. It is especially useful
in a practical setting where data instances are stored distributedly on multiple machines. 
This setting is common on web data due to efficiency and privacy concerns. 
We therefore consider the following large-scale datasets in our experiments. 
\begin{itemize}
    \item {\webspam}~\citep{wang2012evolutionary} is a binary classification task aiming at detecting if a web page is created to manipulate search engines. We use bag-of-words model with n-gram ($n\leq3$) to extract features.  
    \item {\uu}~\citep{ma2009identifying} detects malicious URLs based on their lexical and host-based features.
    \item {\kddb} is the dataset used in KDD Cup 2010 with the goal to predict students' performance based on logs of their interaction with an educational system. We follow \cite{yu2010feature} to extract features. 
\end{itemize}
The statistics of the data are summarized in Table
\ref{tbl:data}.\footnote{Downloaded from
	\url{http://www.csie.ntu.edu.tw/~cjlin/libsvmtools/datasets/}.}

We consider both linear SVM and L2-regularized logistic regression
discussed in Section \ref{subsec:binary}.
The comparison criteria are the relative primal and dual objective
distances to the optimum,
respectively defined as
\begin{equation}
	\left|\frac{f^P\left(\bw\left(\AL^t\right)\right) -
	f^*}{f^*}\right|,\quad
	\left|\frac{f(\AL^t) - (-f^*)}{f^*}\right|,
\label{eq:obj}
\end{equation}
where $f^*$ is the optimum  we obtained approximately by running our
algorithm with a tight stopping condition.
Note that the optimum for the dual and the primal problems are
identical except the flip of the sign, according to strong duality.
We examine the relation between these values and the training time.
We fix $C=1$ in this experiment.
The distributed environment is a cluster of $16$ machines connected
through MPI.

We compare the methods below whenever applicable.
\begin{itemize}
	\item \blockapprox: the Block-Diagonal Approximation method
		proposed in this paper.
		For the dual SVM problem, we utilize its quadratic objective
		to conduct exact line search while
		backtracking line search is used with the parameters being
		$\tau = 10^{-2}, \beta = 0.5$ for logistic regression.
		For $B_t$ in \eqref{eq:Ht}, we use $a_1^t \equiv
		1$ for all problems, as it is the closest block-diagonal
		approximation of the Hessian.  We set $a_2^t = 10^{-3}$
		in the hinge-loss SVM problem and $a_2^t = 0$ in the other two
		whose dual objectives are strongly convex.
	\item \disdca \citep{TY13b}: we use the practical
		variant for it outperforms th basic variant empirically.
		Moreover, experimental result in \cite{CM15b} showed that this
		algorithm (under a different name CoCoA+) is faster than
		\dsvm, and the best solver for the local
		sub-problems is indeed SDCA used in \cite{TY13b}.
	\item \lcommdir \citep{CPL17a}: a state-of-the-art
		distributed primal ERM solver that has been shown to
		empirically outperform existing distributed primal approaches.
We take the experimental setting in \cite{CPL17a} to use historical
information from the latest five iterations.
	\item \tron \citep{YZ14a,CYH17a}: a distributed implementation for
		ERM problems of the trust-region truncated Newton method
		proposed by \cite{TS83a}
\end{itemize}
All methods are implemented in C++.
We use the implementation of \lcommdir and \tron in the package \mpi
2.11.\footnote{\url{http://www.csie.ntu.edu.tw/~cjlin/libsvmtools/distributed-liblinear/}.}
These two methods require differentiability of the primal objective,
so we apply them only on squared-hinge loss SVM and logistic
regression problems.
We implement \disdca and \blockapprox with the local sub-problem solver
being the random permutation cyclic coordinate descent (RPCD) for dual
SVM \citep{CJH08a} and for dual logistic regression \citep{HFY10a}.
Note that the original solver in \cite{TY13b} is the dual stochastic
coordinate descent algorithm in \cite{SSS12a} that samples the
coordinates with replacement, but it has been shown in \cite{SSS12a}
that empirically RPCD is faster, and therefore we apply it in \disdca
as well.
At each iteration, we run one epoch of RPCD on each machine, namely
we pass through the whole dataset once, before communication.
This setting ensures that \disdca and \blockapprox have
computation-to-communication ratios similar to that of \lcommdir and
\tron,
so our results represent both a comparison for the training time
and a comparison for the number of communication rounds.

The comparison of the dual and primal objectives are shown in Figures
\ref{fig:webspam}-\ref{fig:kddb}.
For \webspam and \uu that are easier to solve, we present the result
of running different algorithms for $500$ seconds.
For the more difficult problem \kddb, we run all algorithms for
$10,000$ seconds.

We first discuss the dual objectives.
We can see that our approach is always better than state of the art
for the dual problem.
The difference is more significant in the SVM problems, showing that
low-cost exact line search has its advantage over backtracking,
while backtracking is still better than the fixed step size scheme.
The reason behind is that although the approach of \disdca provides a
safe upper bound model for the objective difference such that the
local updates can be directly applied to ensure the objective
decrease,
this upper bound might be too conservative as suggested by
\cite{CM15b}, but more aggressive upper bound modelings might be
computationally impractical to obtain.
On the other hand, our approach provides an efficient way to
dynamically estimate how aggressive the updates can be, depending to
the current iterate.
Therefore, the objective can decrease faster as the update is more
aggressive but still safe in terms of ensuring sufficient objective
value decrease.

Now we turn to the primal objectives.
Note that the step-like behavior of \blockapprox is from that we use
the best primal objective up to the current iterate discussed
in Section \ref{subsec:practical}.
Although aggressive step sizes in \blockapprox results in less stable
primal objective progress especially in the beginning, we observe that
\blockapprox still reaches lower primal objective faster than \disdca,
and the behavior of the early stage is less important.
For the case of hinge-loss SVM, \blockapprox is always the best, and
note that only dual approaches are feasible for hinge loss as it is
not differentiable.
When it comes to squared-hinge loss SVM, in which case exact line
search for the dual problem can still be conducted,
\blockapprox outperforms all primal and dual approaches.
The dual problem of logistic regression is not a quadratic one,
hence we cannot easily implement exact line search and need to resort
to the backtracking approach in Algorithm~\ref{alg:linesearch}.
We can see that for this problem, \lcommdir has an advantage in the
later stage of optimization,
while \blockapprox and \disdca are competitive till a medium
precision, which is usually enough for linear classification tasks.
In most cases, \tron is the slowest method.

\begin{figure*}
\begin{center}
	\begin{tabular}{@{}cc@{}}
Dual & Primal \\
\multicolumn{2}{c}{Hinge-loss SVM}\\
	\includegraphics[width=.45\textwidth]{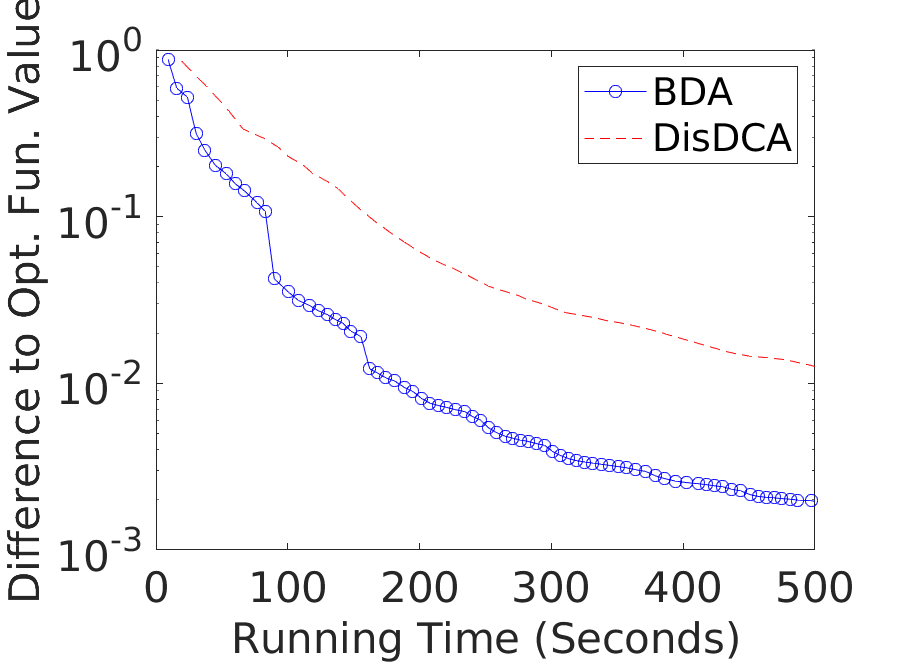} &
	\includegraphics[width=.45\textwidth]{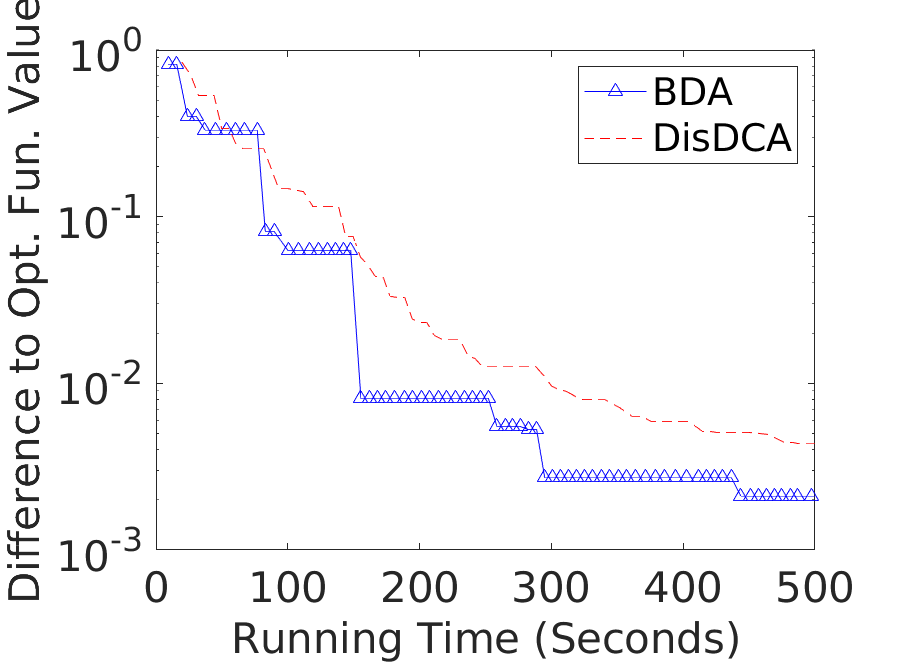}
	\\
\multicolumn{2}{c}{Squared-hinge loss SVM}\\
	\includegraphics[width=.45\textwidth]{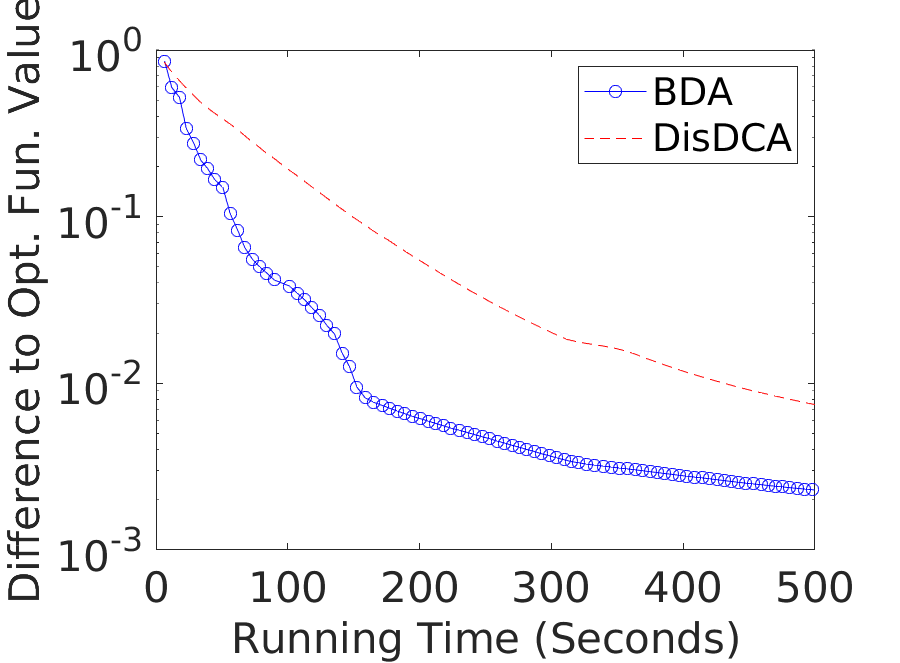} &
	\includegraphics[width=.45\textwidth]{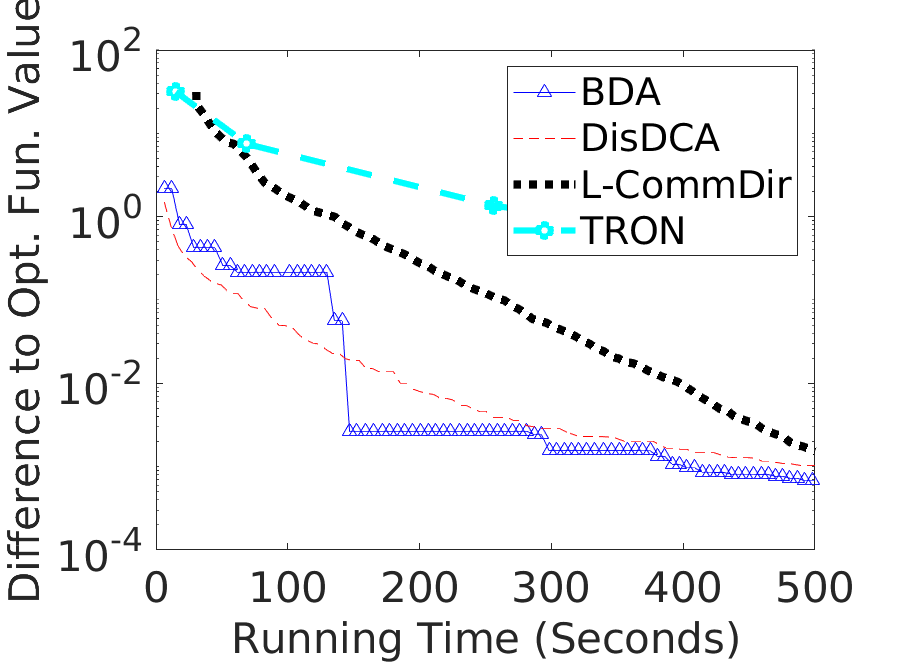}
	\\
\multicolumn{2}{c}{Logistic regression}\\
	\includegraphics[width=.45\textwidth]{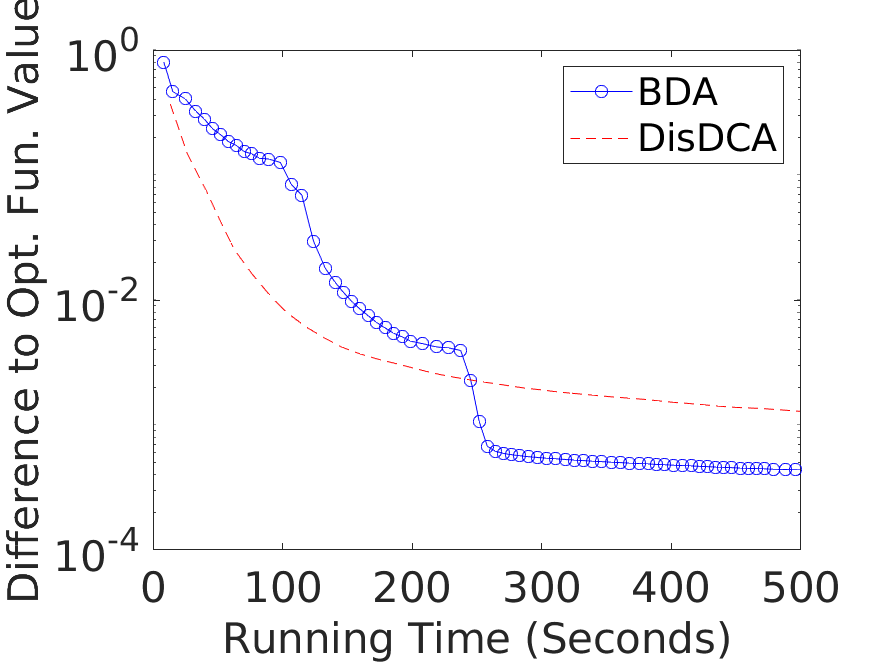} &
	\includegraphics[width=.45\textwidth]{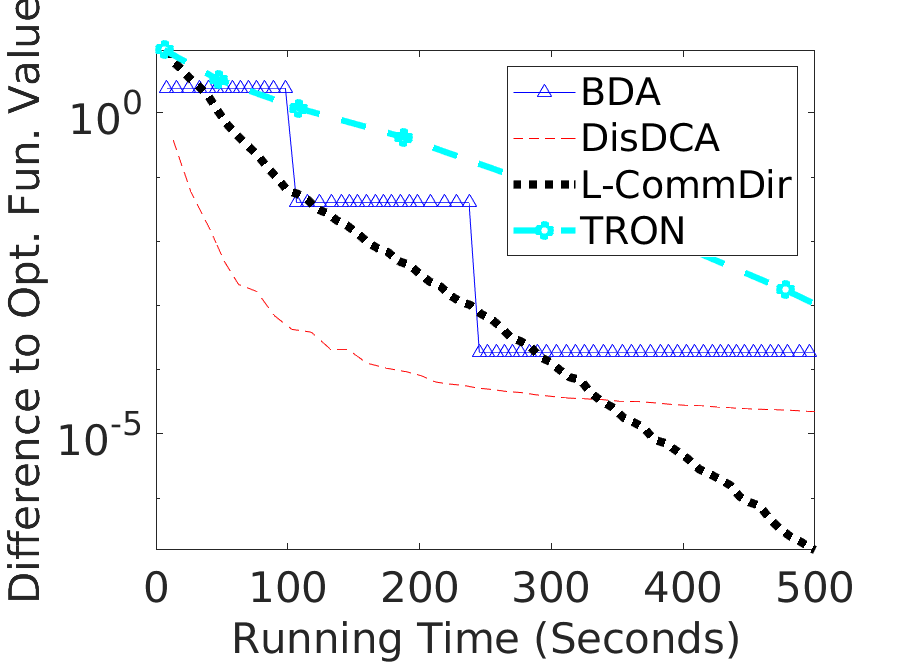}
\end{tabular}
\end{center}
\caption{Comparison of different algorithms for optimizing the ERM
problem on \webspam with  $C=1$. We show training time v.s. relative
difference of the objectives to the optimal function value.}
\label{fig:webspam}
\end{figure*}

\begin{figure*}
\begin{center}
	\begin{tabular}{@{}cc@{}}
Dual & Primal \\
\multicolumn{2}{c}{Hinge-loss SVM}\\
	\includegraphics[width=.45\textwidth]{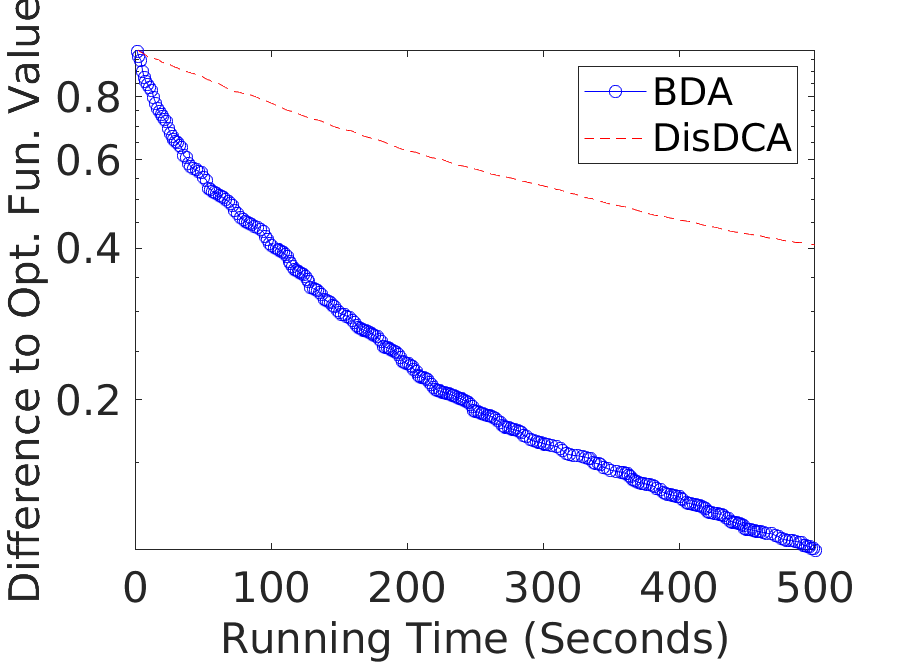} &
	\includegraphics[width=.45\textwidth]{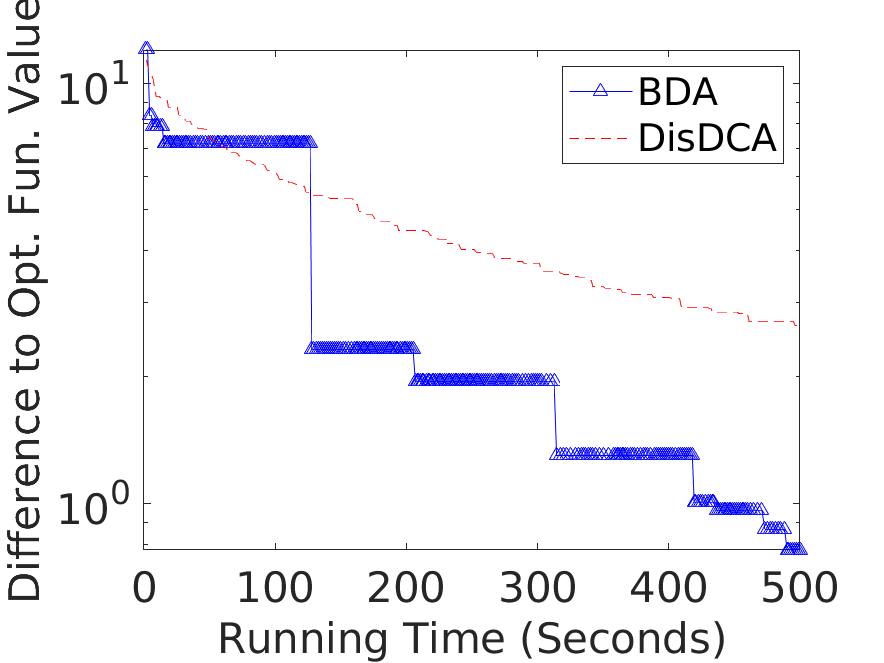}
	\\
\multicolumn{2}{c}{Squared-hinge loss SVM}\\
	\includegraphics[width=.45\textwidth]{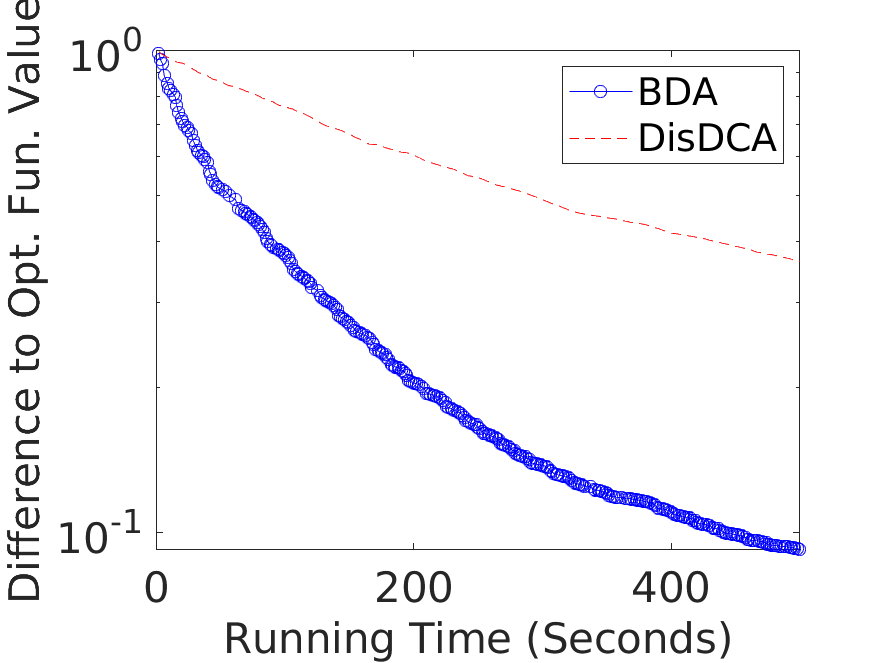} &
	\includegraphics[width=.45\textwidth]{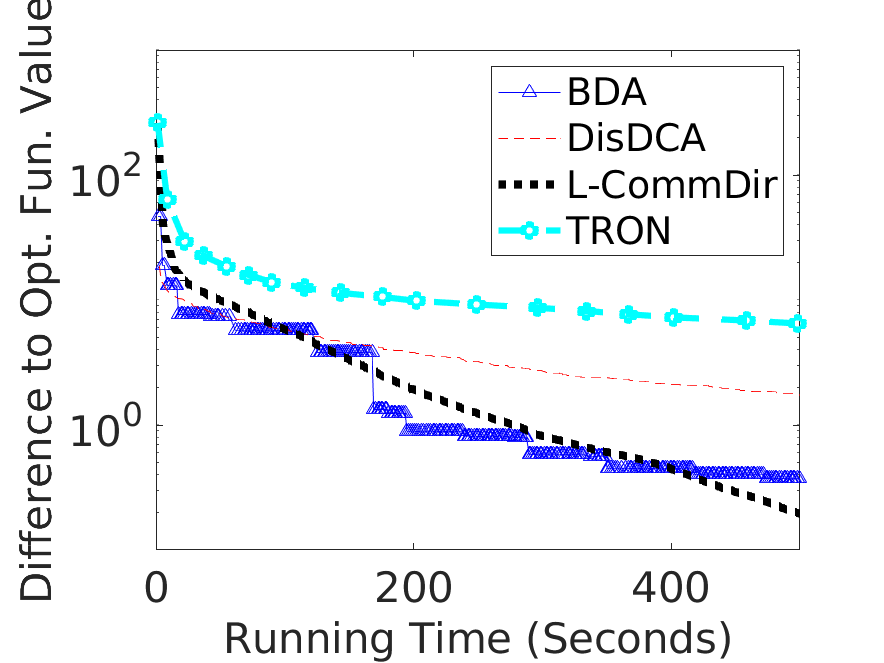}
	\\
\multicolumn{2}{c}{Logistic regression}\\
	\includegraphics[width=.45\textwidth]{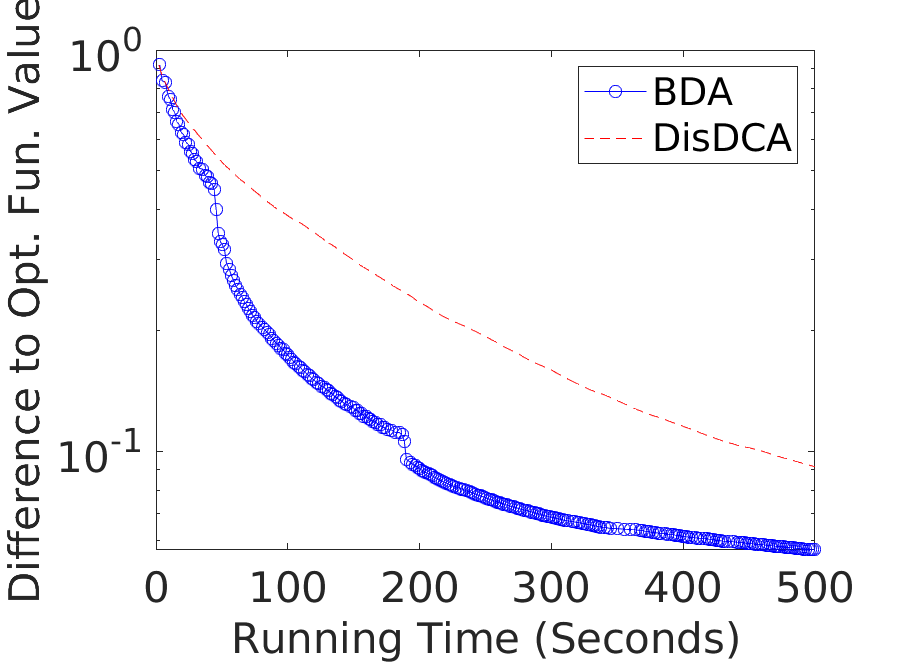} &
	\includegraphics[width=.45\textwidth]{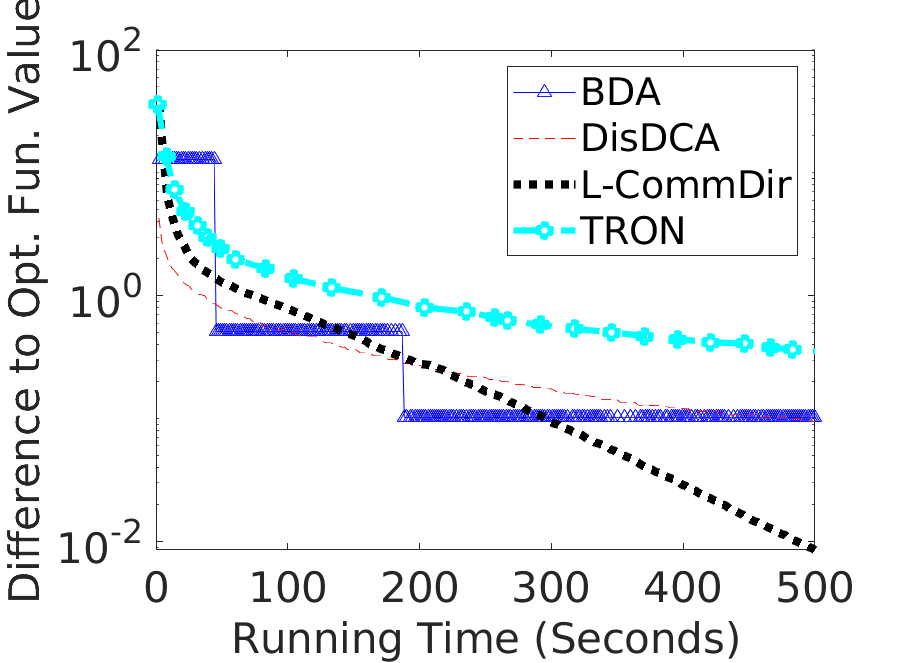}
\end{tabular}
\end{center}
\caption{Comparison of different algorithms for optimizing the ERM
problem on \uu with  $C=1$. We show training time v.s. relative
difference of the objectives to the optimal function value.}
\label{fig:url}
\end{figure*}

\begin{figure*}
\begin{center}
	\begin{tabular}{@{}cc@{}}
Dual & Primal \\
\multicolumn{2}{c}{Hinge-loss SVM}\\
	\includegraphics[width=.45\textwidth]{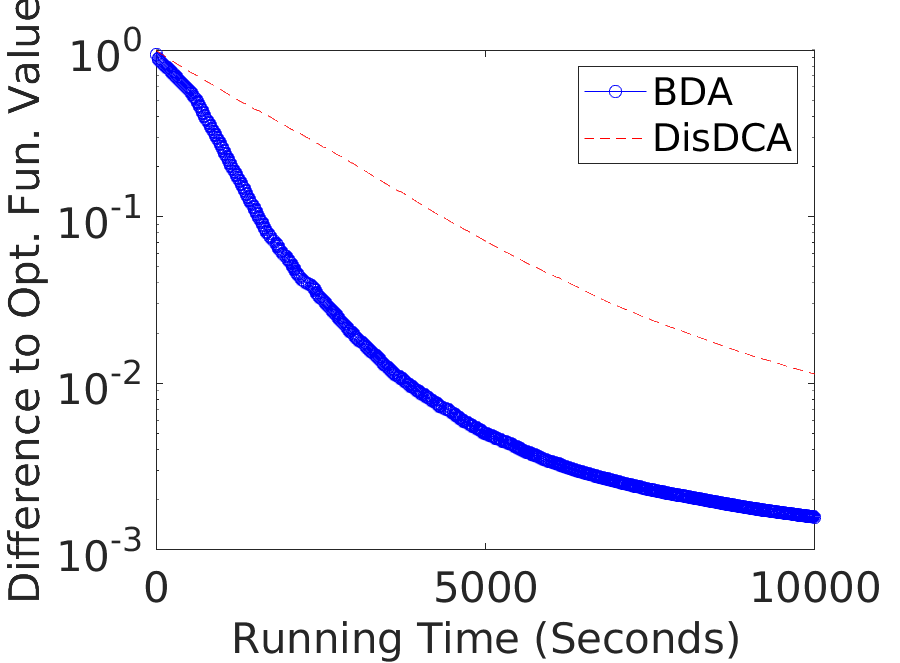} &
	\includegraphics[width=.45\textwidth]{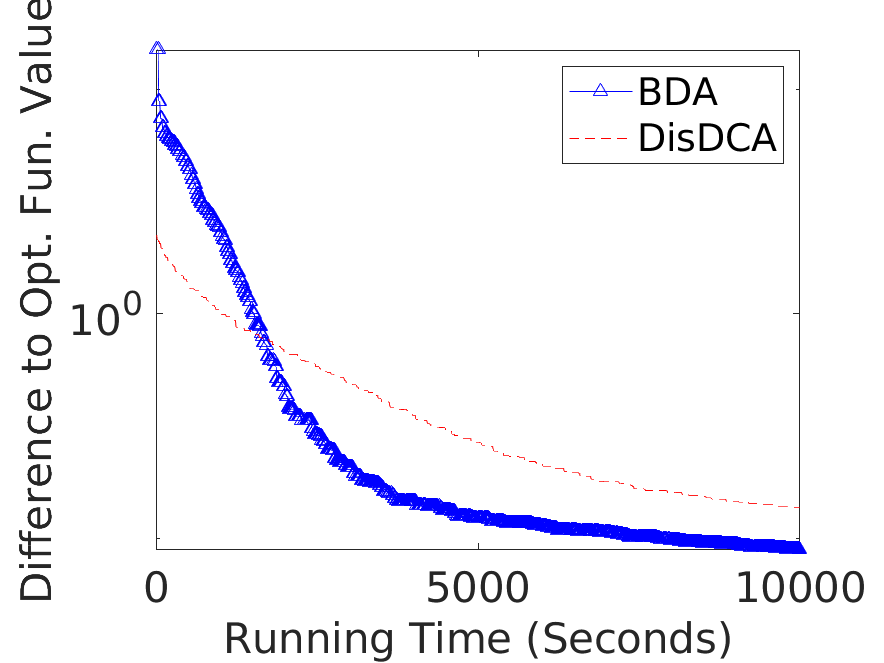}
	\\
\multicolumn{2}{c}{Squared-hinge loss SVM}\\
	\includegraphics[width=.45\textwidth]{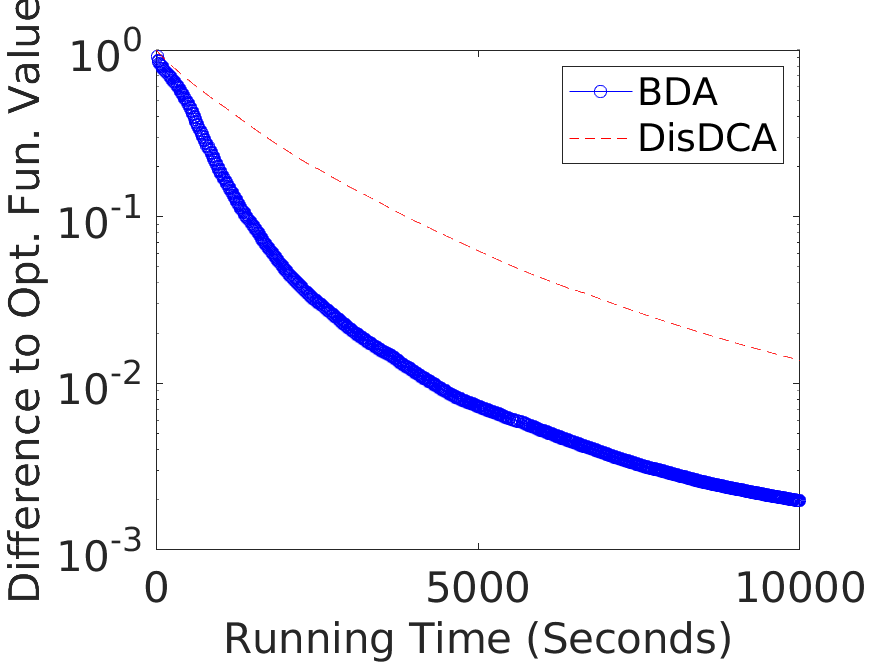} &
	\includegraphics[width=.45\textwidth]{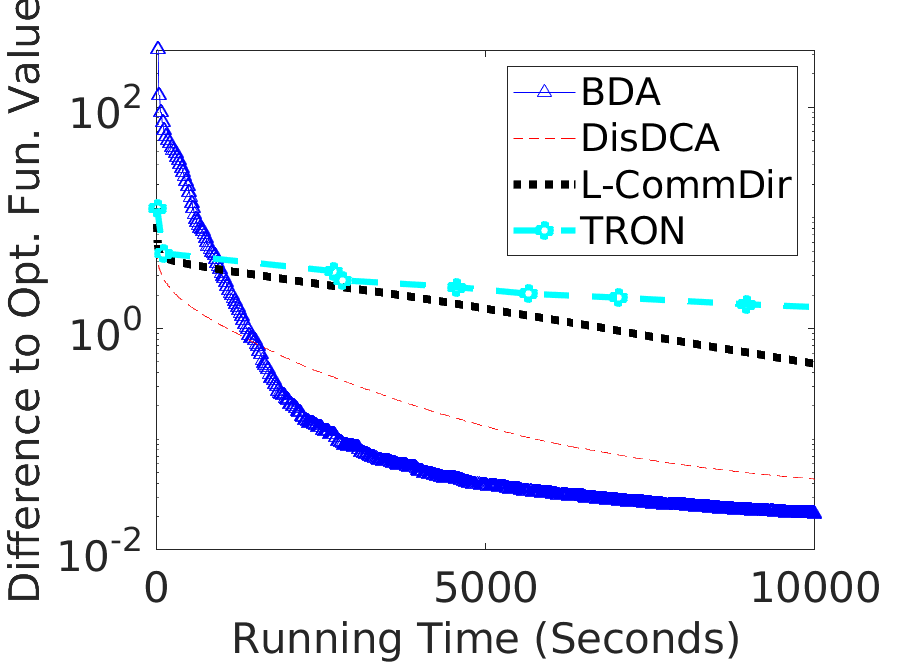}
	\\
\multicolumn{2}{c}{Logistic regression}\\
	\includegraphics[width=.45\textwidth]{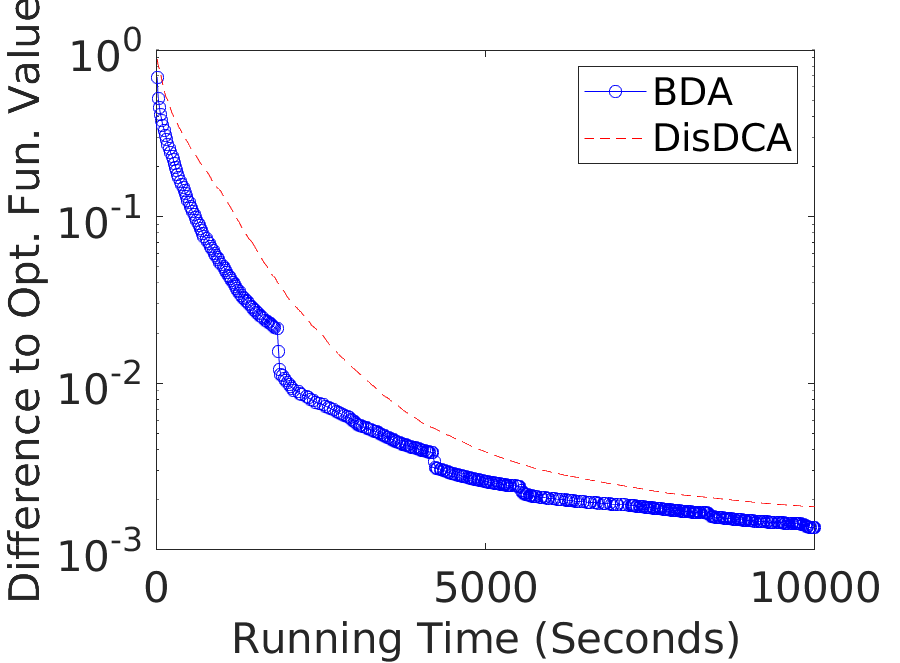} &
	\includegraphics[width=.45\textwidth]{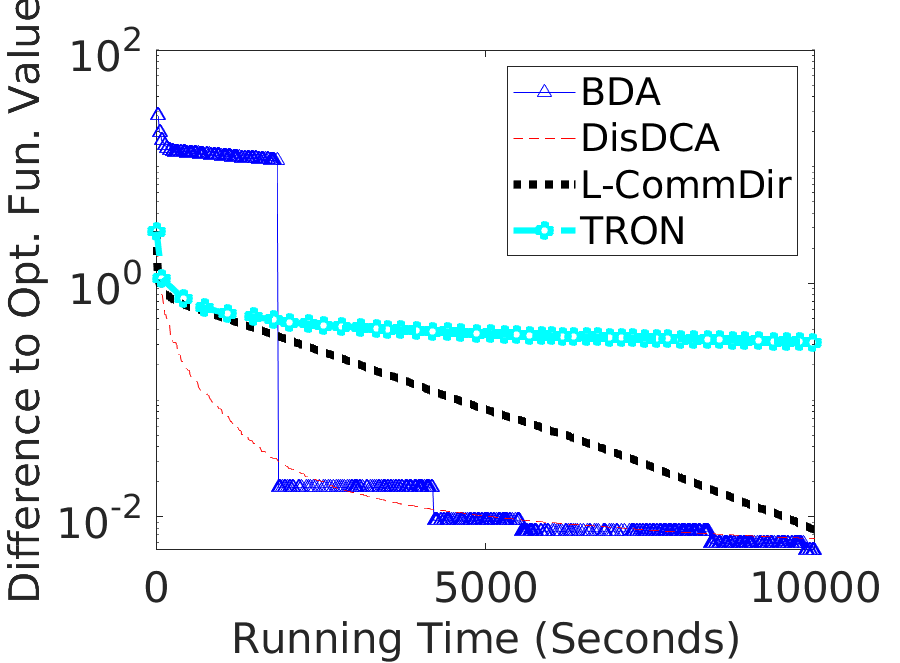}
\end{tabular}
\end{center}
\caption{Comparison of different algorithms for optimizing the ERM
problem on \kddb with  $C=1$. We show training time v.s. relative
difference of the objectives to the optimal function value.}
\label{fig:kddb}
\end{figure*}

\subsection{Structured Learning}
We perform experiments on two benchmark tasks for structured
prediction, part-of-speech tagging (\POS) and dependency parsing
(\dep).
For both tasks, we use the Wall Street Journal portion of the Penn
Treebank \citep{penn-tree-bank} with the standard split for training
(section 02-21), development (section 22), and test (section 23).
\POS is a sequential labeling  task, where we aim at learning
part-of-speech tags assigned to each word in a sentence.
Each tag assignment (there are 45 possible tag assignments) depends on
the associated word, the surrounding words, and their part-of-speech
tags. The inference in \POS is solved by the Viterbi algorithm
\citep{AV67a}.
We evaluate our model by the per-word tag accuracy.
For \dep, the goal is to learn, for each sentence, a tree structure
which describes the syntactic dependencies between words.
We use the graph-based parsing formulation and the features described
in \cite{MPRH05}, where we find the highest scoring parse using the
Chu-Liu-Edmonds algorithm \citep{ChuLiu65,JE67a}.
We evaluate the parsing accuracy using the unlabeled attachment score,
which measures the fraction of words that have correctly assigned
parents.

We compare the following algorithms using eight nodes in a local cluster.
All algorithms are implemented in JAVA,
and the distributed platform is MPI.
\begin{itemize}
	\item \blockapprox: the proposed algorithm. We take $a_1^t \equiv K$
		and $a_2^t \equiv 10^{-3}$ as $a_1^t \equiv 1$ is less stable
		in the primal objectives, which is essential for the
		sub-problem solver in this application.
	\item \ADMM: distributed alternating directions method of
		multiplier discussed in \cite{SB11b}.
	\item \perceptron: a parallel structured perceptron algorithm described in \cite{McDonaldHaMa10}.
	\item Simple average: each machine trains a separate
		model using the local data. The final model is obtained by averaging
		all local models.
\end{itemize}
For \blockapprox and \ADMM, the problem considered is SSVM in
\eqref{eq:ssvm} with L2 loss.
\perceptron, on the other hand, solves a similar but different problem
such that no regularization is involved.
We set $C=0.1$ for SSVM. Empirical experience suggests that structured SVM is not sensitive to $C$,
and the model trained with $C=0.1$ often attains reasonable test performance.

\begin{figure*}
	\centering
	\begin{tabular}{@{}c@{}c@{}}
\begin{subfigure}[b]{0.5\textwidth}
		\includegraphics[width=\textwidth]{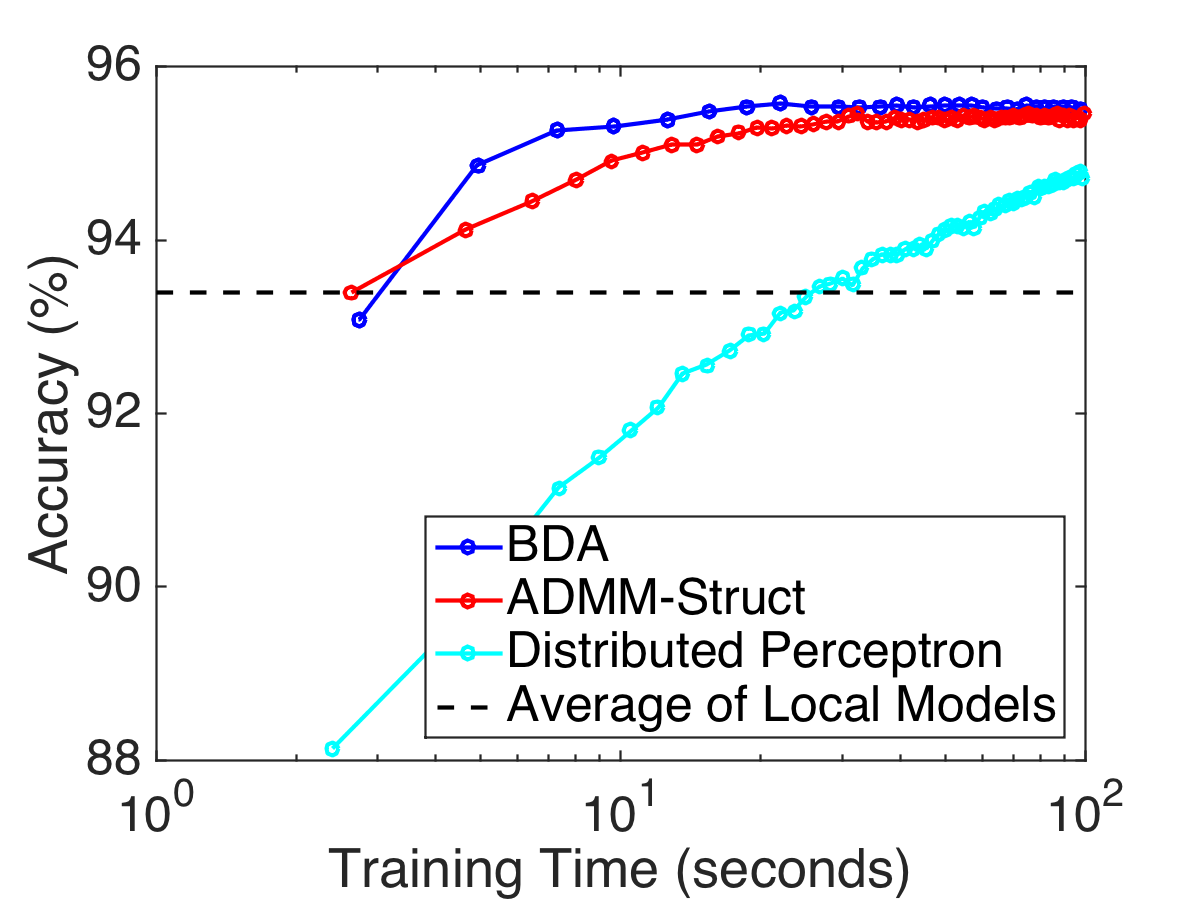} 
		\caption{\POS}
		\label{fig:pos}
	\end{subfigure}
		&
\begin{subfigure}[b]{0.5\textwidth}
		\includegraphics[width=\textwidth]{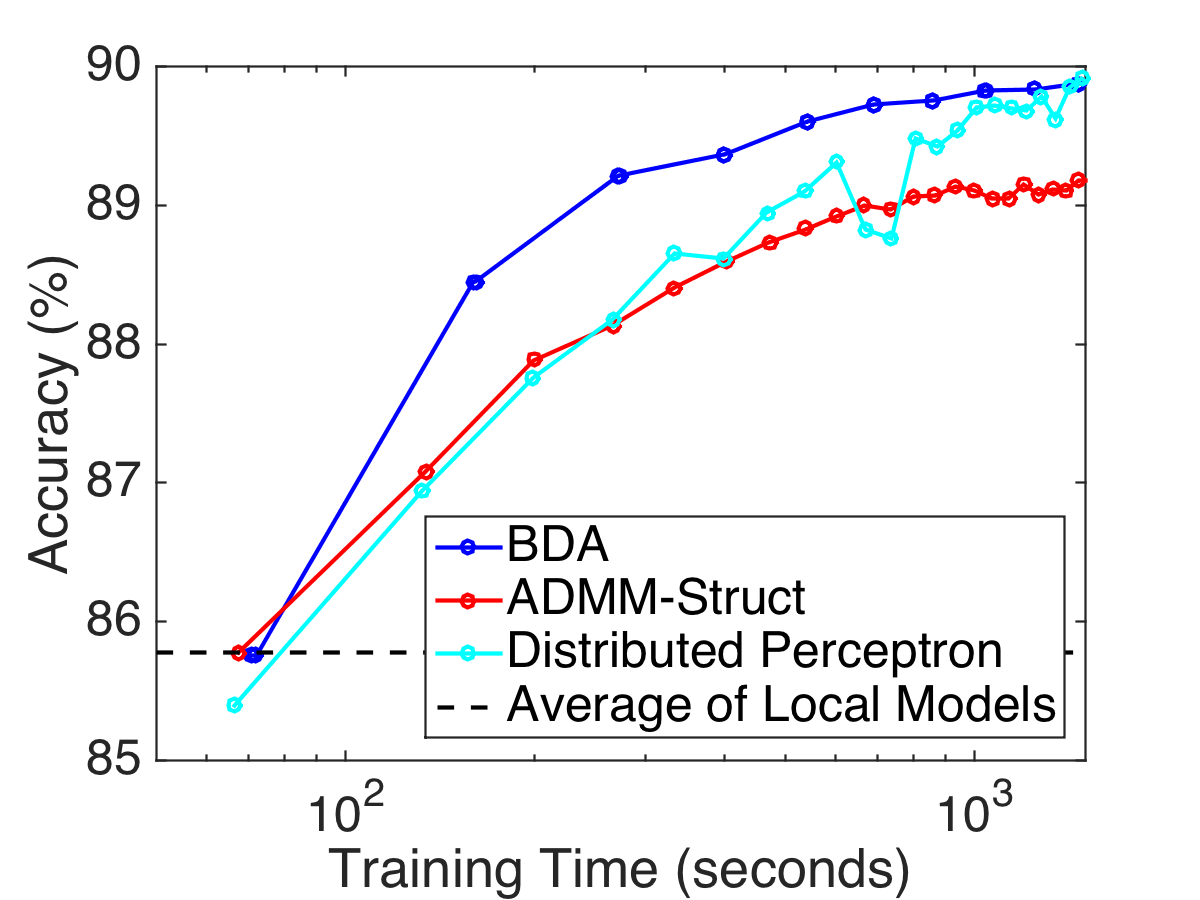} 
		\caption{\dep}
		\label{fig:dep}
	\end{subfigure}
\end{tabular}
\caption{Comparison between different algorithms for structured learning using eight nodes.
Training time is in {\it log scale}.}
	\label{fig:expstr}
\end{figure*}

Both \ADMM and \blockapprox decompose the original optimization problem into 
sub-problems, and we solve the sub-problems by the dual coordinate
descent solver for L2-loss SSVM proposed in \cite{ChangYi13}, which is
shown to be empirically efficient comparing to other existing methods.
By solving the sub-problems using the same optimizers, we can
investigate the algorithmic difference between \ADMM and \blockapprox.
For all algorithms, we fix the number of passes through all instances
to make inferences between any two rounds of communication to be
one, so that the number of inference rounds is identical to the number
of communication rounds. Although it is possible to alter the number
of inferences between two rounds of communication (or the number of
communication between two rounds of inferences) to obtain a faster
running time,
fine-tuning this parameter is not realistic for users because this
parameter does not affect the prediction performance, and thus there
is no reason to spend time retrain the model several times.
For \blockapprox and \ADMM,
each time in solving the local sub-problem with a fixed working set,
we let the local RPCD solver pass through the local instances ten times.
We note that this number of iterations may also affect the convergence
speed but we do not fine-tune this parameter for the same reason above.
For \ADMM, the weight for the penalty term in the augmented Lagrangian
also affects the convergence speed.\footnote{See, for example,
	\cite{SB11b} for details.}
Instead of fine-tuning it, a fixed value of $1.0$ is used.
Note that since \perceptron and \blockapprox/\ADMM consider different
problems,
instead of showing objective function values, 
we compare the test performance along training time of these methods.

Figure \ref{fig:expstr} shows the results. The x-axis is in \emph{log-scale}. 
Although averaging local classifiers achieves reasonable performance, 
all other methods improve the performance of the models with 
multiple rounds of communications.  This indicates that training models 
jointly on all parts of data is necessary. 
Among different algorithms, \blockapprox performs the best in both tasks.  
It achieves the final accuracy performance (indicated when the
accuracy stops improving) with shorter training time 
comparing to other approaches.  This result confirms that \blockapprox enjoys 
a fast convergence rate.  

\subsection{Line Search}
\label{subsec:lineexp}
The major difference between our approach and most other dual
distributed optimization methods for ERM is the line
search part.
In this subsection we examine the step sizes obtained in
practice and show that there is still a gap between
\eqref{eq:improved0} and \eqref{eq:improved} as the Lipschitz
parameter can be smaller in a local region.
We also investigate the empirical cost of line search.
For this investigation, we use the information from the L2-regularized
linear classification experiments in Section \ref{subsec:binaryexp}.

In Table~\ref{tbl:linesearchtime}, we show the proportion of time
spent on line search to the overall training time.
As the results indicate, the cost of line search is relatively low
in comparison to solving the local sub-problem and communicating
$\Delta \bv$.
Note that the cost of line search for hinge and squared-hinge loss is
independent to the final step size, as exact line search instead of
backtracking is applied.

\begin{table}
	\begin{center}
	\caption{Percentage of training time spent on line search.
		For hinge and squared-hinge loss, Variant II of
		Algorithm~\ref{alg:blockcd} is used, while Variant I is
	applied for logistic loss.}
	\label{tbl:linesearchtime}
	\begin{tabular}{l|rrr}
		Loss & \webspam & \uu& \kddb\\
		\hline
		Hinge & 6.25\% & 7.82\% & 2.05\% \\
		Squared-hinge & 9.87\% & 10.64\% & 7.50\% \\
		Logistic & 0.47\% & 5.00\% & 4.55\%
	\end{tabular}
	\end{center}
\end{table}

\subsection{Speedup}
Finally, we examine the practical speedup of \blockapprox.
We pick \webspam on L2-loss SVM as a representative example for this
experiment.
We run different algorithms on $\{1,2,4,8,16\}$ machines and see how
the training time and the overall running time (training time plus
data loading time) differ.
We record the time for \eqref{eq:obj} to reach $10^{-2}$ in Figure
\ref{fig:scale}.
The left column represents the time measured using the primal
objective, while the right column represents that using the dual
objective.
We can see that when it comes to the training time, \tron has a better
speedup because its algorithmic behavior is invariant of how the data
are distributed, while \blockapprox still enjoys better speedup than
the state of the art dual solver \disdca.

When the data loading time is combined, we can see that
\blockapprox has the best speedup, and the reason can be seen from the
third row of time profiling.
We see that the bottleneck in the single-machine case is data loading
which is embarrassingly parallel, and the training time of
\blockapprox is insignificant in comparison with the I/O time.
Therefore, although the training time speedup of \blockapprox is not
that significant, the running time speedup is very promising.
Another reason we cannot obtain good speedup in the training time is
that the single-machine case is already very efficient in comparison
with \tron, so it is rather difficult to have further improvement.

\begin{figure*}
\begin{center}
	\begin{tabular}{@{}cc@{}}
Dual & Primal \\
\multicolumn{2}{c}{Training Time Speedup}\\
\includegraphics[width=.45\textwidth]{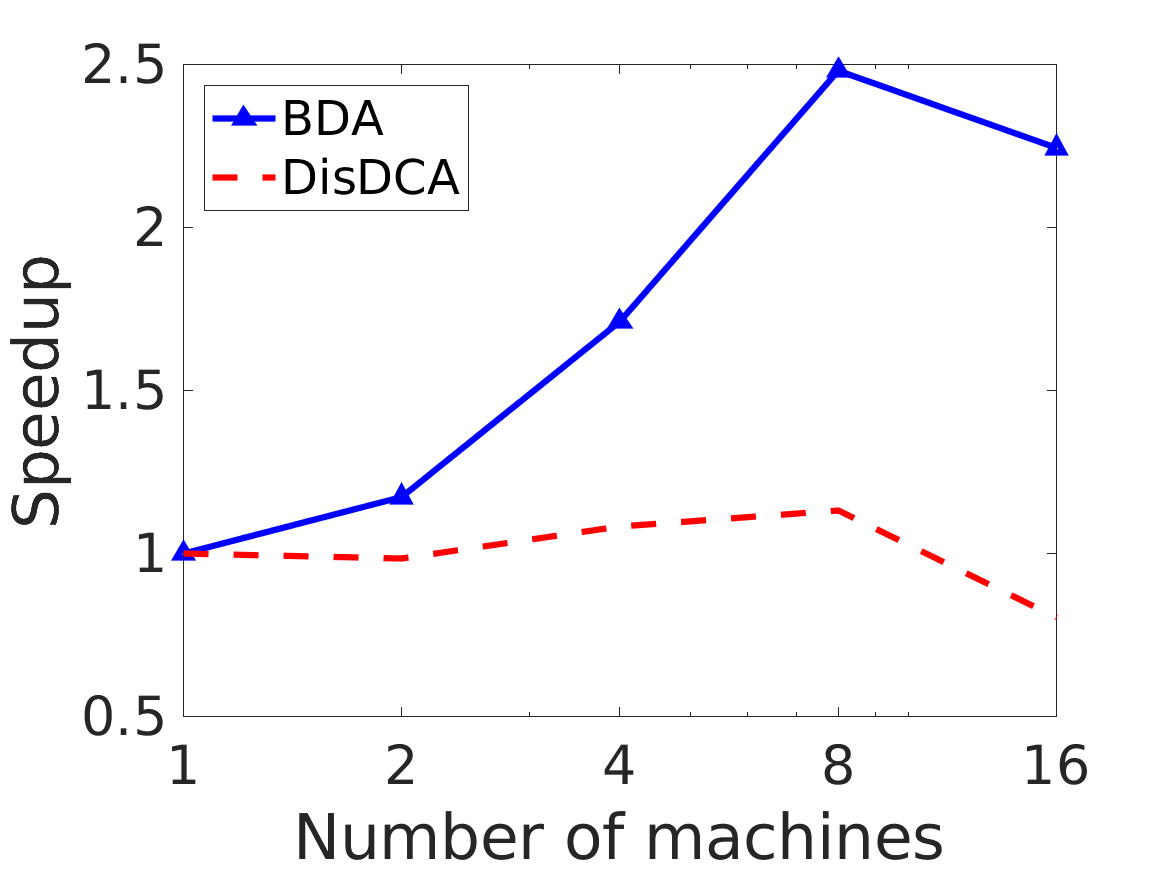} &
\includegraphics[width=.45\textwidth]{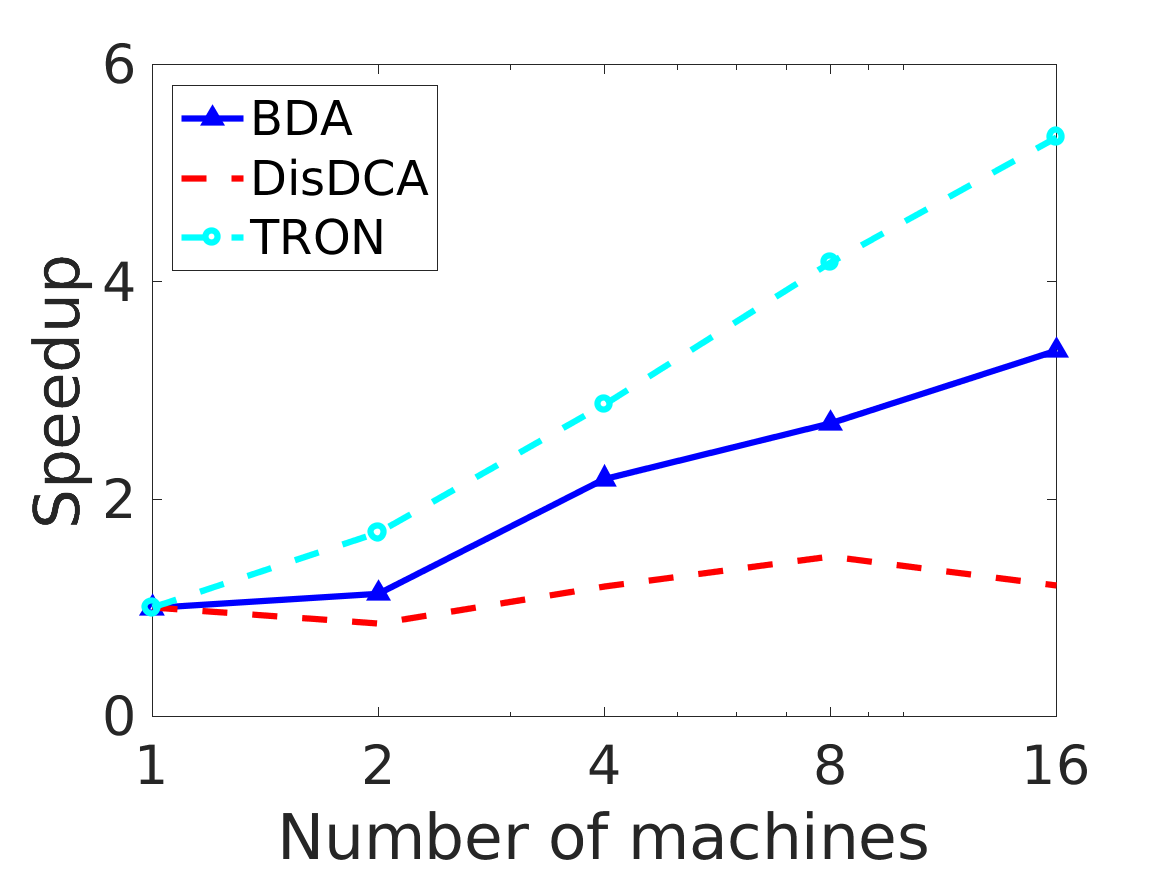}
	\\
\multicolumn{2}{c}{Overall Running Time Speedup}\\
\includegraphics[width=.45\textwidth]{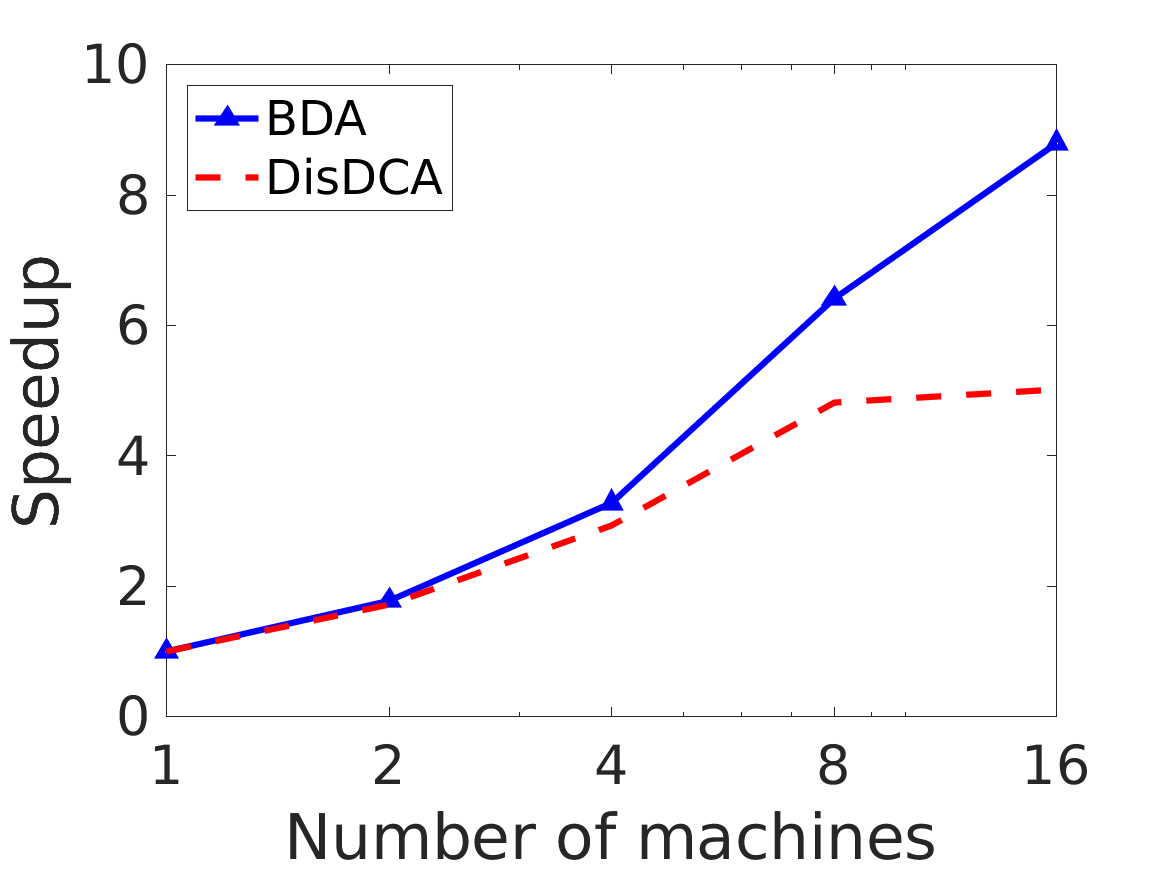}
&
\includegraphics[width=.45\textwidth]{figures/webspam.trdual_speedup.png}
	\\
\multicolumn{2}{c}{Scalability Factors}\\
\includegraphics[width=.45\textwidth]{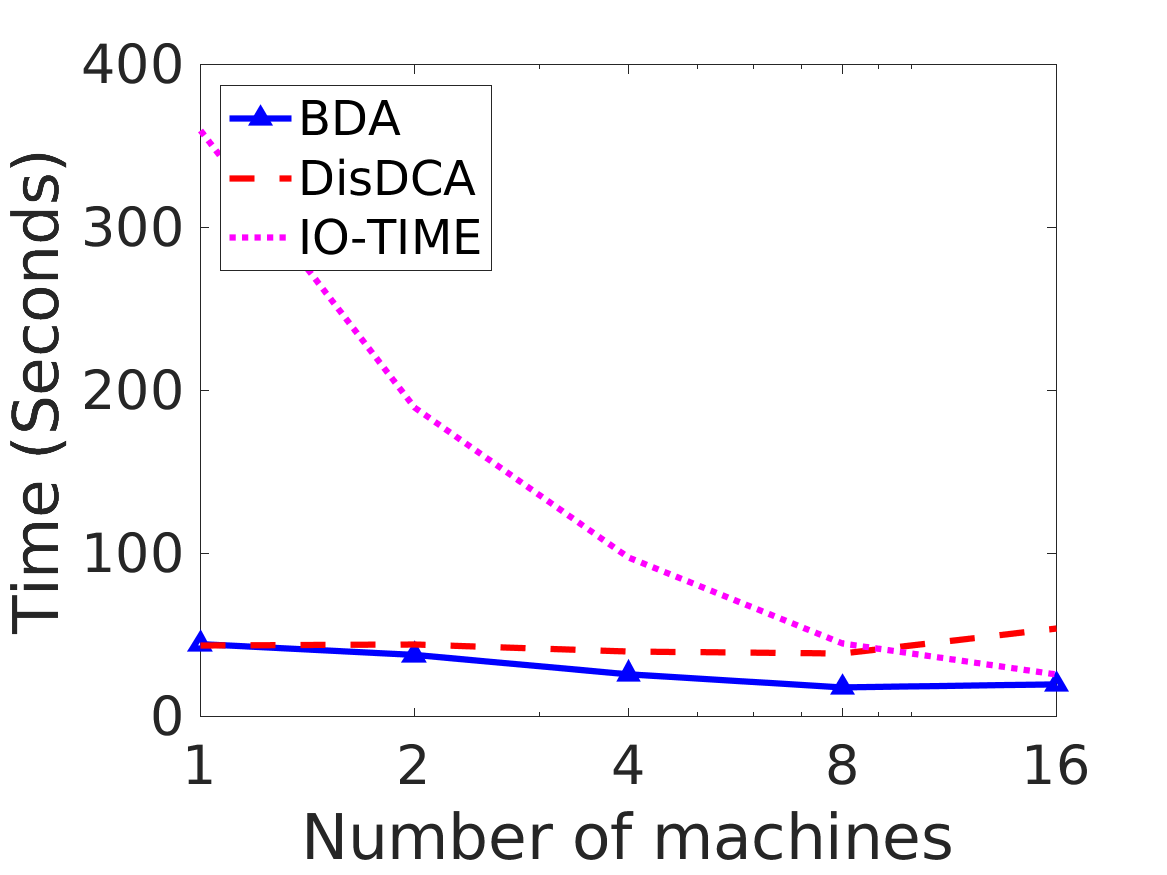}&
\includegraphics[width=.45\textwidth]{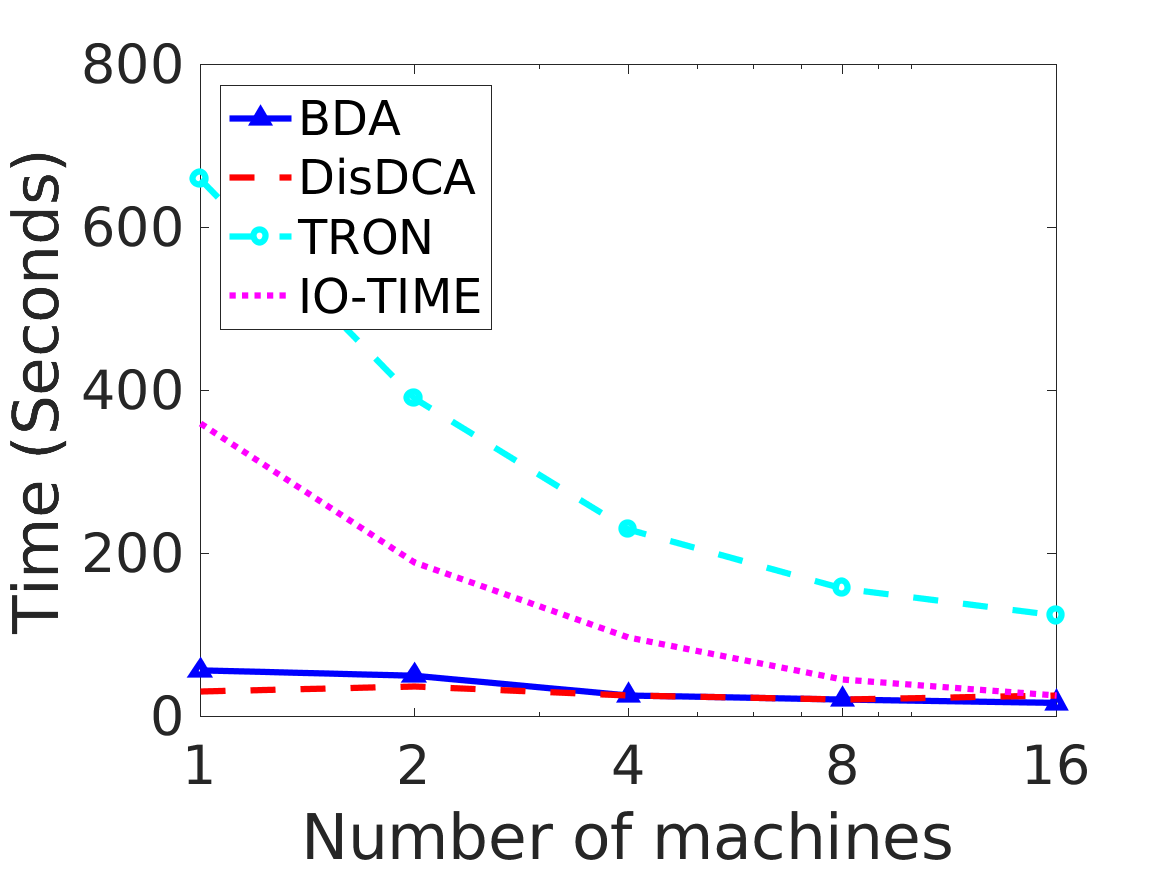}
\end{tabular}
\end{center}
\caption{Speedup of different algorithms for training L2-loss SVM
on \webspam with $C=1$.}
\label{fig:scale}
\end{figure*}

\section{Discussion}
\label{sec:discuss}
As Section~\ref{subsec:improved} suggests, if the block-diagonal
matrix $B_t$ is a tight approximation to the Hessian of $G^*$,
\blockapprox is expected to enjoy fast convergence.
To achieve so, we might partition the data in a better way such that
those off diagonal-block entries in the matrix $X^TX$  are as small as
possible, then the Hessian will also have smaller off-diagonal terms.
However, repartitioning the data across machines involves a significant
amount of data transmission, and designing an efficient mechanism to
split the data into blocks with desirable properties is challenging.
One practically feasible scenario is the case where the data points
are streamed in and partitioned in an online fashion.

Notice that in \eqref{eq:improved}, having a larger step size while
maintaining a large $\mu_B$ leads to fast convergence.
However, balancing these two factors is not an easy task.
One potential heuristic is to adjust $a_1^t$ and $a_2^t$ dynamically
based on the step size in the previous iteration.

One limitation of our current approach is that the algorithm does not
scale strongly with the number of machines when the data size is
fixed.
If the number of machines increases, $B_t$ will contain more zero entries.
This means the algorithm will be closer to a proximal gradient method and
converge slowly.
This is inevitable for all distributed dual optimizers we discussed in
Section~ \ref{sec:related}.
However, in many real applications, distributed optimization techniques
are used to protect privacy or handle distributional data. In such
applications, repartitioning data is costly and may not be feasible.
Therefore, the number of machines is predefined and  practitioners are
concerned more about how to make the optimization
procedure more efficient given the fixed number of machines and the fixed
data partitions, but not how to use more machines for the same data to
speed up the training process.

As mentioned in Section \ref{sec:related}, just like
\citet{ZS17a} applied existing acceleration techniques on top of
\disdca,
our algorithm can also be combined with the acceleration techniques
proposed by \citet{SSS13a,HL15a} to obtain a faster algorithm, and we
expect using our algorithm instead of \disdca will be faster than the
result in \cite{ZS17a} as our algorithm is faster than \disdca in
practice.
This comparison will be an interesting future work.

\section{Conclusions}
\label{sec:conclusion}
In this work, we proposed a distributed optimization framework for
the dual problem of regularized empirical risk minimization.
Our theoretical results show linear convergence for both the dual
problem and the corresponding primal problem for a variety class of
popular problems whose dual problem is non-strongly convex.
Our analysis further shows that when the sub-problem can serve as a
preconditioner to improve the problem condition, much better
convergence speed can be expected.
Our approach is most powerful when it is difficult to directly solve
the primal problem.
Experimental results show that our method outperforms state-of-the-art
distributed dual approaches for regularized empirical risk
minimization, and is competitive to cutting-edge distributed primal
methods when those primal methods are feasible.

\paragraph{Acknowledgement.}
The authors would like to thank the action editor and the anonymous
reviewers for their valuable comments and Dan Roth, Shyam Upadhyay,
Chih-Jen Lin, Cho-Jui Hsieh, Martin Jaggi, and Shai Shalev-Shwartz for
their feedback and suggestions on the early version of this paper.
This work was supported in part by NSF Grant IIS-1760523.
\bibliographystyle{spmpsci}     
\bibliography{local}

\appendix
\section{Proofs}
\subsection{Proof of Lemma \ref{lemma:strong}}
\begin{proof}
By \cite[Part E, Theorem 4.2.1]{HU01a},
if Assumption \ref{assum:LipGrad} holds, then $\bxi^*(\cdot)$
and hence $f$ is $(1/\rho)$-strongly convex.
We thus have that for any $\AL_1, \AL_2 \in \Omega$
and any $\lambda \in (0,1]$,
\begin{align*}
	f\left( \lambda\AL_1 + \left( 1-\lambda \right)\AL_2 \right)
	\leq \lambda f\left(\AL_1 \right) + \left(1-\lambda \right) f
	\left(\AL_2 \right) -
	\frac{\lambda(1-\lambda)}{2\rho}\left\|\AL_1 - \AL_2\right\|^2,
\end{align*}
which implies
\begin{align*}
f\left(\AL_1\right) - f\left(\AL_2\right)
\geq
	\frac{1 - \lambda}{2 \rho} \|\AL_1 - \AL_2\|^2 +
	\frac{f\left( \AL_2 + \lambda\left( \AL_1  - \AL_2
	\right)\right) - f\left( \AL_2 \right)}{\lambda}.
\end{align*}
Let $\lambda \rightarrow 0^+$, we get
\begin{align*}
f\left(\AL_1\right) - f\left(\AL_2\right)
\geq
	\frac{1}{2 \rho} \|\AL_1 - \AL_2\|^2 +
	\bs^T \left( \AL_1 - \AL_2 \right), \forall \bs \in \partial
	f\left( \AL_2 \right).
\end{align*}
By taking $\AL_2 = \AL$ with any $\bs \in \partial f(\AL)$ and
minimizing both sides with respect to $\AL_1$ simultaneously, we get
\eqref{eq:strong} as $\bs$ is arbitrary.

\end{proof}

\subsection{Proof of Lemma \ref{lemma:linesearch} and
\ref{lemma:linesearch2}}
We can see that Lemma \ref{lemma:linesearch} is a special case
of Lemma \ref{lemma:linesearch2} with $\gamma = 0$, so we
provide detailed proof for the latter only.

This result follows directly from \cite[Lemma~3]{CPL18a}, which
implies (in our notation)
\begin{equation}
	\Delta_t \leq - \frac12 \left( \frac{\left( 1 - \sqrt{\gamma}
\right)C_2 }{\left( 1 + \sqrt{\gamma} \right)} +
C_1\right)\left\|\DAL^t\right\|^2
	\label{eq:descent}
\end{equation}
and
\begin{equation*}
\eta_t \geq \min\left(1, \frac{\beta(1 - \tau) \sigma \left(\left( 1 +
	\sqrt{\gamma}\right)C_1 + \left( 1 - \sqrt{\gamma} \right)C_2
\right)}{\left\|X^T X\right\| \left( 1 + \sqrt{\gamma}
\right)}\right).
\end{equation*}

\subsection{Proof of Theorem \ref{thm:duallinear}}
\begin{proof}
	We first show the result for the variant of using backtracking
	line search.
	From \eqref{eq:update}, \eqref{eq:descent} with $\gamma = 0$, and
	\eqref{eq:armijo}, we have that
	\begin{equation}
		f(\AL^{t+1}) - f(\AL^t) \leq -\eta_t \tau
		\frac{C_1 + C_2 }{2}\|\DAL^t\|^2.
		\label{eq:improve}
	\end{equation}
%	On the other hand,
%	from Lemma \ref{lemma:strong}, we see that either Assumption
%	\ref{assum:LipGrad} or \ref{assum:Lipsloss} suggests that there is
%	a $\mu$ such that \eqref{eq:strong} holds.
	From the optimality of $\DAL^{t}$ in \eqref{eq:quadratic}, we
	get that
	\begin{equation}
		\nabla G^*(\AL^t) + B_t \DAL^t + \tilde\bs^{t+1} = 0,
		\label{eq:optimality}
	\end{equation}
	for some $\tilde\bs^{t+1} \in \partial \bxi^*(-\AL^t - \DAL^t)$.
	By convexity, that the step size is in $[0,1]$,
	and the condition \eqref{eq:strong}, we have
	\begin{align}
		f\left(\AL^{t+1}\right) - f^* &\leq \eta_t \left( f\left(\AL^t +
		\DAL^t\right) - f^*\right) + \left(1 - \eta_t\right)
		\left(f\left(\AL^t\right) - f^*\right) \nonumber\\
		&\leq \eta_t
		\frac{\|\nabla G\left(\AL^{t} + \DAL^t\right) +
		\tilde\bs^{t+1}\|^2}{2\mu} + \left(1 - \eta_t\right) \left(
		f\left(\AL^t\right) - f^*\right).
		\label{eq:intermediate}
	\end{align}
	Now to relate the first term to the decrease,
	we use \eqref{eq:optimality} to get
	\begin{align}
		\|\nabla G(\AL^{t} + \DAL^t) + \tilde\bs^{t+1}\|^2& \leq \|\nabla
		G^*(\AL^{t} + \DAL^t) -\nabla G^*(\AL^t) + \nabla G^*(\AL^t) +
		\tilde\bs^{t+1}\|^2\nonumber\\
		&\leq 2 \|\nabla G^*(\AL^{t} + \DAL^t) - \nabla G^*(\AL^t)\|^2 + 2
		\|B_t\DAL^t\|^2\nonumber\\
		&\leq 2 \left(\frac{\|X^T X\|}{\sigma}\right)^2 \|\DAL^t\|^2 + 2
		\|B_t\|^2 \|\DAL^t\|^2,
		\label{eq:intermediate2}
	\end{align}
	where in the second inequality, we used $(a+b)^2 \leq 2(a^2 +
	b^2)$ for all $a,b$, and in the last inequality we used the Lipschitz
	continuity of $\nabla G^*$.
	We therefore get the following by combining
	\eqref{eq:intermediate}, \eqref{eq:intermediate2},
	and \eqref{eq:improve}.
	\begin{align}
		\nonumber
		f\left( \AL^{t+1} \right) - f^*
		\leq &~\frac{\eta_t}{\mu}
		\left(\left(\frac{\|X^T X\|}{\sigma}\right)^2 + C_3^2 \right) \|\DAL^t\|^2 +
		\left(1 - \eta_t \right) \left( f\left(\AL^t \right) - f^*
		\right) \nonumber\\
		\leq &~\left( \left(\frac{\|X^T X\|}{\sigma}\right)^2 + C_3^2\right)
		\frac{2 \left( f \left(\AL^t \right) -f \left(\AL^{t+1}
	\right) \right) }{\mu(C_1 + C_2 )\tau } + \left(1 - \eta_t \right)
		\left(f \left(\AL^t \right) - f^*\right).
		\label{eq:bound2}
	\end{align}
	Let us define
	\begin{equation*}
		C_4 \coloneqq \left(\left(\frac{\|X^T X\|}{\sigma}\right)^2 + C_3^2
		\right)\frac{2}{\mu \left( C_1 +
			C_2 \right)
		\tau},
	\end{equation*}
	then rearranging \eqref{eq:bound2} gives
	\begin{equation}
		(f(\AL^{t+1}) - f^*) \leq \frac{(1 - \eta_t + C_4)}{1+C_4} (f(\AL^t) -
		f^*).
		\label{eq:qlinear}
	\end{equation}
	Combining the above result with the lower bound of $\eta_t$
	from \eqref{eq:stepsize}
	shows the desired $Q$-linear convergence rate.
	As of the exact line search variant,
	it produces an objective no larger than
	the left-hand side of \eqref{eq:qlinear}, so the same rate holds.
\end{proof}

\subsection{Proof of Theorem \ref{thm:duallinear2}}
\begin{proof}
This a direct application of \cite[Theorem 1]{WP18a}.
Their result implies
\begin{equation}
\frac{f\left( \AL^{t+1} \right) - f^*}{f \left( \AL^t \right) -
f^*}
\leq
\begin{cases}
1 - \eta_t \tau \left(1 - \gamma \right)
\frac{\mu}{4C_3}, &\text{ if } \mu \le 2 C_3,\\
1 - \eta_t \tau \left(1 - \gamma \right)
\left( 1 - \frac{C_3}{\mu} \right), &\text{else}.
\end{cases}
\label{eq:oneiter}
\end{equation}
Using \eqref{eq:stepsize} in \eqref{eq:oneiter}, we obtain
\eqref{eq:linear}.
\end{proof}

\subsection{Proof of Theorem \ref{thm:dualitygap}}
\begin{proof}
	Our proof consists of using $\AL$ as the initial point,
	applying one step of some primal-dual algorithm, then utilizing
	the algorithm-specific relation between the decrease in one
	iteration and the duality gap to obtain the bound.
	Therefore we will obtain an algorithm-independent result from some
	algorithm-specific results.

	When Assumption \ref{assum:LipGrad} holds, \eqref{eq:primal}
	is the type of problems considered in \cite{SSS12b},
	and we have that $\bxi^*$ is $(1/\rho)$-strongly convex.
	If we take $\AL$ as the initial point, and apply one
	step of their method to obtain the next iterate $\AL^+$,
	from \cite[Lemma 1]{SSS12b},
	we get that for any $s \in [0,1]$,
	\begin{align}
		\epsilon &= f\left(\AL\right) - f\left(\AL^*\right)
		\geq f\left(\AL\right) - f\left(\AL^+ \right)
		\geq s \left(f^P\left(\bw(\AL)\right) + f\left(\AL\right)\right) -
		\frac{s^2 G_s}{2 \sigma}\nonumber\\
		&\geq s \left(f^P\left(\bw(\AL)\right) -
		f^P\left(\bw^*\right)\right) - \frac{s^2 G_s}{2
		\sigma},
		\label{eq:Gradgap}
	\end{align}
	where
	$\bw^*$ is the optimal solution of \eqref{eq:primal}, and
	\begin{equation*}
		G_s \coloneqq \left(\|X^TX\| - \frac{\sigma (1 -
		s)}{s\rho}\right)\left\|\bu - \AL\right\|^2,\quad -\bu_i \in
		\partial \xi_i\left(X_i^T \bw\left( \AL \right) \right).
	\end{equation*}
	To remove the second term in \eqref{eq:Gradgap}, we set
	\begin{equation*}
		\|X^T X\| - \frac{\sigma(1 - s)}{s\rho } = 0
		\quad \Rightarrow \quad s = \frac{\sigma}{\sigma+ \rho \|X^T
		X\|} \in [0,1].
	\end{equation*}
	This then gives
	\begin{equation*}
	\left(1 + \frac{\rho\|X^T X\|}{\sigma}\right) \epsilon \geq
	f^P\left(\bw(\AL)\right) -
	f^P\left(\bw^*\right).
	\end{equation*}
	Although \cite[Lemma 1]{SSS12b} is for the expected value of the
	dual objective decrease at the current iteration and the expected
	duality gap at the previous iteration, we can remove the
	expectations as the expected duality gap is actually a constant
	for the initial point, and the expected function decrease cannot
	exceed the distance from the current objective to the optimum.

	When Assumption \ref{assum:Lipsloss} holds, \eqref{eq:primal}
	falls in the type of problems discussed in \cite{FB15a}.
	If we take $\AL$ as the initial point, and apply one
	step of their method to obtain the next iterate $\AL^+$,
	from the final inequality in the proof of Proposition 4.2 in
	\cite{FB15a} and weak duality, we get
	\begin{equation}
		\label{eq:Lipsbound}
		\epsilon
		\geq
		s \left(f^P\left(\bw(\AL)\right) -
		f^P\left(\bw^*\right)\right) - \frac{\left(sR\right)^2}{2
		\sigma}, \quad \forall s \in [0,1],
	\end{equation}
	where
	\begin{equation}
		\label{eq:R}
		R^2 \coloneqq \max_{\AL, \BL \in \Omega} \left\|X \left(\AL -
		\BL \right)\right\|^2
		\leq \left\|X^T X\right\| \max_{\AL, \BL \in \Omega}
		\left\|\AL -
		\BL\right\|^2
		\leq 4\left\|X^T X\right\| L^2.
	\end{equation}
	In the last equality we used \cite[Corollary 13.3.3]{RTR70a} such
	that if $\phi(\cdot)$ is $L$-Lipschitz continuous, then the
	radius of $\text{dom}(\phi^*)$ is no larger than $L$.
	Now take $s = \min\{1, \sqrt{2 \sigma \epsilon / R^2}\}$,
	we get that
	\begin{equation*}
		\begin{cases}
			2 \epsilon \geq \epsilon + \frac{R^2}{2 \sigma} \geq
			f^P\left(\bw\right) -
			f^P\left(\bw^*\right),
			& \text{ if } \epsilon \geq \frac{R^2}{2\sigma},\\
			\sqrt{\frac{2R^2\epsilon}{\sigma}} \geq f^P\left(\bw\right) -
		f^P\left(\bw^*\right), &\text{ else}.
		\end{cases}
	\end{equation*}
	These conditions and \eqref{eq:R} indicate that
	\begin{equation*}
		f^P\left(\bw\right) - f^P\left(\bw^*\right) \leq \max\left\{ 2
			\epsilon,
		\sqrt{\frac{2\epsilon R^2 }{\sigma}}\right\}
		\leq \max\left\{2 \epsilon,
		\sqrt{\frac{8\epsilon \|X^T X\| L^2}{\sigma}}\right\}.
	\end{equation*}
\end{proof}

\subsection{Proof of Lemma~\ref{lemma:improvedline}}
\begin{proof}
The part of \eqref{eq:deltabdd} follows directly from the proof of
\cite[Corollary~1]{CPL18a}.
Notice that they required positive definiteness of the matrix for other
parts stated in that corollary but the part for \eqref{eq:deltabdd}
holds true as long as $B_t$ is positive semidefinite.

For the lower bound on the step size, we notice that for any $\eta \in
[0,1]$,
\begin{align}
\nonumber
&~ f\left( \AL^t + \eta \Delta\AL^t \right) - f\left( \AL^t \right)\\
\nonumber
= &~ G^*\left(\AL^t + \eta \Delta\AL^t   \right) - G^*\left( \AL
	\right) + \bxi^*\left(-\AL^t - \eta \Delta\AL^t\right) -
	\bxi^*\left( -\AL^t \right)\\
\label{eq:tmp1}
\le &~ \eta \nabla G^* \left( \AL^t \right)^T \Delta\AL^t + \frac{\eta^2
	L_B}{2} \left\|\Delta\AL^t\right\|^2_{B_t} + \bxi^* \left( -\AL^t
	- \eta \Delta\AL^t\right) - \bxi^*\left( -\AL^t \right)\\
\label{eq:tmp2}
\le &~ \eta \nabla G^* \left( \AL^t \right)^T \Delta\AL^t + \eta
	\left( \bxi^* \left( -\AL^t - \Delta\AL^t \right) - \bxi^* \left(
	-\AL^t \right)\right) + \frac{\eta^2 L_B}{2} \left\| \Delta\AL^t
	\right\|^2_{B_t}\\
= &~ \eta \Delta_t + \frac{L_B \eta^2}{2} \left\| \Delta \AL^t
	\right\|^2_{B_t}
\label{eq:tmp3}
\le \eta \Delta_t - \frac{L_B \eta^2 \left( 1 + \sqrt{\gamma}
	\right)}{2} \Delta_t,
\end{align}
where we used \eqref{eq:betterLip} in \eqref{eq:tmp1}, the convexity
of $\bxi^*(-\cdot)$ in \eqref{eq:tmp2}, and \eqref{eq:deltabdd} in
\eqref{eq:tmp3}.
We can therefore see that \eqref{eq:armijo} is satisfied when
\begin{equation*}
	\left( \eta - \frac{L_B \eta^2 \left( 1 + \sqrt{\gamma}\right)}{2}
	\right) \Delta_t \le \eta \tau \Delta_t.
\end{equation*}
As $\Delta_t \le 0$ from \eqref{eq:deltabdd}, we have that
\eqref{eq:armijo} holds whenever
\begin{equation*}
\eta \le \frac{2 \left( 1 - \tau \right)}{L_B \left( 1 + \sqrt{\gamma}
\right)}.
\end{equation*}
After considering the overshoot of backtracking by a factor of
$\beta$, this inequality leads to the desired step size bound.
\end{proof}

\subsection{Proof of Theorem~\ref{thm:improved}}
\begin{proof}
We define $(\Delta \AL^t)^*$ as the optimal solution for
\eqref{eq:quadratic} at the $t$-th iteration and start from
\cite[Lemma~5]{CPL18a}, which states that when $f$ is convex, we have
the following inequality.
\begin{equation*}
	Q^{\AL^t}_{B_t} \left( \left( \Delta\AL^t \right)^*\right) \leq
	-\lambda \left( f\left( \AL^t \right) - f^* \right) +
	\frac{\lambda^2}{2}\min_{\AL^* \in A}\left\| \AL^t - \AL^*
	\right\|^2_{B_t}, \forall \lambda \in [0,1].
\end{equation*}
By utilizing \eqref{eq:betterQG}, we further deduce that
\begin{equation}
	Q^{\AL^t}_{B_t} \left( \left( \Delta\AL^t \right)^*\right) \leq
	-\lambda \left( f\left( \AL^t \right) - f^* \right) +
	\frac{\lambda^2}{\mu_B} \left( f\left( \AL^t \right) - f^*
	\right),\forall \lambda \in [0,1],
\label{eq:bound_improve}
\end{equation}
which attains the minimal value at $\lambda = \mu_B / 2$, which is in
$[0,1]$ according to our assumption that $\mu_B \leq 2$.
By considering the line search stopping condition \eqref{eq:armijo}
and using $\lambda = \mu_B / 2$ in \eqref{eq:bound_improve},
we can see that since $B_t$ is positive semidefinite,
\begin{align*}
f\left( \AL^{t} + \eta_t \Delta\AL^t\right) - f\left( \AL^t \right)
&\leq \eta_t \tau \Delta_t
\leq \eta_t \tau \left(\Delta_t + \frac12 \left\| \Delta\AL^t
\right\|_{B_t}^2 \right)
= \eta_t \tau Q^{\AL^t}_{B_t} \left( \DAL^t \right)\\
&\leq \eta_t \tau \left( 1 - \gamma \right)Q^{\AL^t}_{B_t} \left(
\left( \Delta\AL^t \right)^*\right)
\leq -\eta_t \tau \left( 1 - \gamma \right) \frac{\mu_B}{4} \left(
f\left( \AL^t \right) - f^* \right).
\end{align*}
Finally, by taking in the step size in Lemma~\ref{lemma:improvedline},
we see that the convergence speed is
\begin{align*}
f\left( \AL^{t+1} \right) - f^* &= f\left( \AL^{t} + \eta_t
\Delta\AL^t\right) - f^*\\
&\leq \left( 1 - \frac{\tau \left( 1 - \gamma \right) \mu_B}{4}
	\min\left\{ 1, \frac{2 \beta \left( 1 - \tau
	\right)}{\left( 1 + \sqrt{\gamma} \right) L_B} \right\} \right)
	\left( f\left( \AL^t \right) - f^* \right)\\
	&= \left(1 - \frac{\tau \mu_B}{2}\min\left\{\frac{1 - \gamma}{2}, \frac{\beta
		\left( 1 - \tau \right) \left( 1 - \sqrt{\gamma}
	\right)}{L_B}\right\}\right) \left( f\left( \AL^t \right) - f^* \right).
\end{align*}
\end{proof}
\end{document}